\documentclass{amsart}

\usepackage{amsthm}
\usepackage{amsmath}
\usepackage{amssymb}
\usepackage{amsfonts}
\usepackage{latexsym}
\usepackage[all]{xy} \SelectTips{eu}{}
\usepackage{hyperref}
\usepackage{eucal}
\usepackage{xcolor}
\usepackage{tikz}
\usepackage{wasysym}
\usepackage{mathrsfs}

\newtheorem{introthm}{Theorem}[]

\newtheorem*{intque*}{Question}
\newtheorem*{intexa*}{Example}
\newcommand{\numberseries}{\bfseries}   
\newlength{\thmtopspace}                
\newlength{\thmbotspace}                
\newlength{\thmheadspace}               
\newlength{\thmindent}                  
\newtheoremstyle{bfupright head,slanted body}
{\thmtopspace}{\thmbotspace}
{\slshape}{\thmindent}{\bfseries}{.}{\thmheadspace}
{{\numberseries \thmnumber{#2\;}}\thmnote{#3}}

\newtheoremstyle{bfupright head,upright body}
{\thmtopspace}{\thmbotspace}
{\upshape}{\thmindent}{\bfseries}{.}{\thmheadspace}
{{\numberseries \thmnumber{#2\;}}\thmnote{#3}}

\newtheoremstyle{fixed bf head,slanted body}
{\thmtopspace}{\thmbotspace}{\slshape}
{\thmindent}{\bfseries}{.}{\thmheadspace}
{{\numberseries \thmnumber{#2\;}}\thmname{#1}\thmnote{ (#3)}}

\newtheoremstyle{fixed bf head,upright body}
{\thmtopspace}{\thmbotspace}{\upshape}
{\thmindent}{\bfseries}{.}{\thmheadspace}
{{\numberseries \thmnumber{#2\;}}\thmname{#1}\thmnote{ (#3)}}

\newtheoremstyle{numbered paragraph}
{\thmtopspace}{\thmbotspace}{\upshape}
{\thmindent}{\upshape}{}{\thmheadspace}
{{\numberseries \thmnumber{#2.}}}

\theoremstyle{bfupright head,slanted body}
\newtheorem{res}{}[section]             \newtheorem*{res*}{}

\theoremstyle{bfupright head,upright body}
\newtheorem{bfhpg}[res]{}               \newtheorem*{bfhpg*}{}

\theoremstyle{fixed bf head,slanted body}
\newtheorem{thm}[res]{Theorem}          \newtheorem*{thm*}{Theorem}
\newtheorem{prp}[res]{Proposition}      \newtheorem*{prp*}{Proposition}
\newtheorem{cor}[res]{Corollary}        \newtheorem*{cor*}{Corollary}
\newtheorem{lem}[res]{Lemma}            \newtheorem*{lem*}{Lemma}
         \newtheorem*{que*}{Question}
\theoremstyle{fixed bf head,upright body}
\newtheorem{dfn}[res]{Definition}       \newtheorem*{dfn*}{Definition}
\newtheorem{rmk}[res]{Remark}           \newtheorem*{rmk*}{Remark}
\newtheorem{exa}[res]{Example}           \newtheorem*{exa*}{Example}
\newtheorem{setup}[res]{Setup}           \newtheorem*{setup*}{Setup}
           \newtheorem*{nota*}{Notation}

\theoremstyle{numbered paragraph}
\newtheorem{ipg}[res]{}

\newlength{\thmlistleft}        
\newlength{\thmlistright}       
\newlength{\thmlistpartopsep}   
\newlength{\thmlisttopsep}      
\newlength{\thmlistparsep}      
\newlength{\thmlistitemsep}     

\newcounter{eqc}
\newenvironment{eqc}{\begin{list}{\upshape (\textit{\roman{eqc}})}%
		{\usecounter{eqc}%
			\setlength{\leftmargin}{\thmlistleft}%
			\setlength{\labelwidth}{\thmlistleft}%
			\setlength{\rightmargin}{\thmlistright}%
			\setlength{\partopsep}{\thmlistpartopsep}%
			\setlength{\topsep}{\thmlisttopsep}%
			\setlength{\parsep}{\thmlistparsep}%
			\setlength{\itemsep}{\thmlistitemsep}}}%
	{\end{list}}%

\newcounter{prt}
\newenvironment{prt}{\begin{list}{\upshape (\alph{prt})}%
		{\usecounter{prt}%
			\setlength{\leftmargin}{\thmlistleft}%
			\setlength{\labelwidth}{\thmlistleft}%
			\setlength{\rightmargin}{\thmlistright}%
			\setlength{\partopsep}{\thmlistpartopsep}%
			\setlength{\topsep}{\thmlisttopsep}%
			\setlength{\parsep}{\thmlistparsep}%
			\setlength{\itemsep}{\thmlistitemsep}}}%
	{\end{list}}%

\newcounter{rqm}
	{\end{list}}%

	{\end{list}}%

\newenvironment{prf*}[1][Proof]{%
	\begin{proof}[\bf #1]
		\setcounter{equation}{0}
		}
	{\end{proof}
}

\numberwithin{equation}{res}
\allowdisplaybreaks[1]
\hypersetup{pdftitle={A Krause-type recollement for Q-shaped derived categories},
pdfauthor={Yajun Ma and Peiru Yang},pdfsubject={Q-shaped homological algebra},
pdfkeywords={cotorsion pairs, recollement, compact generation, objectwise injectives}}

\newcommand{\kk}{\Bbbk}
\newcommand{\Q}{Q}
\newcommand{\I}{\mathscr I}
\newcommand{\EE}{\mathscr E}
\newcommand{\EI}{\mathscr E_{\mathrm{inj}}}
\newcommand{\WI}{\mathscr W_{\mathrm{inj}}}
\newcommand{\UU}{\mathscr U}
\newcommand{\FF}{\mathscr F_{\mathrm{inj}}}
\newcommand{\rQ}{\mathfrak r}
\newcommand{\KQ}{\mathcal K_{\Q}(A)}
\newcommand{\KI}{\mathcal K_{\Q}({}_A\Inj)}
\newcommand{\KAI}{\mathcal K_{\Q}^{\mathrm{ac}}({}_A\Inj)}
\newcommand{\KQA}{\mathcal K_{\Q}^{\mathrm{ac}}(A)}
\newcommand{\DQ}{\mathcal D_{\Q}(A)}
\newcommand{\X}{\mathcal{X}}
\newcommand{\DD}{\mathcal{D}}
\newcommand{\Y}{\mathcal{Y}}
\newcommand{\Mod}[1]{{}_{#1}\operatorname{Mod}}
\DeclareMathOperator{\modu}{mod}
\DeclareMathOperator{\Inj}{Inj}
\DeclareMathOperator{\Prj}{Prj}
\DeclareMathOperator{\prj}{prj}
\DeclareMathOperator{\Hom}{Hom}
\DeclareMathOperator{\Ext}{Ext}
\DeclareMathOperator{\Ker}{Ker}
\DeclareMathOperator{\Coker}{Cok}
\renewcommand{\Im}{\operatorname{Im}}
\DeclareMathOperator{\Loc}{Loc}
\DeclareMathOperator{\Thick}{Thick}
\DeclareMathOperator{\supp}{supp}

\DeclareMathOperator{\Id}{Id}

\newcommand{\os}{\mathrm{os}}
\newcommand{\Z}{\mathbb Z}

\title[A Krause-type recollement for $\Q$-shaped derived categories]
{A Krause-type recollement for $\Q$-shaped derived categories}
\author[Y.J. Ma]{Yajun Ma}
\address{Yajun Ma: Department of Mathematics, Lanzhou Jiaotong University,
Lanzhou 730070, China}
\email{yjma@mail.lzjtu.cn}
\thanks{Yajun Ma is supported by NSF of China (Grant No. 12661004).}
\author[P. R. Yang]{Peiru Yang$^\ast$}
\address{Peiru Yang: School of Mathematics and Statistics,
Northeast Normal University, Changchun 130024, Jilin Province,
P. R. China}
\email{yangprlife@163.com}
\thanks{$^\ast$ Corresponding author.}
\subjclass[2020]{18G80; 18G35; 18G20; 16E35}
\keywords{$\Q$-shaped derived category, Q-shaped homotopy category, Frobenius exact category,
cotorsion pair, recollement, compact object}

\makeatletter
\@namedef{subjclassname@2020}{\textup{2020} Mathematics Subject Classification}
\makeatother
\subjclass[2020]{18G25, 18G20, 18G35}
\begin{document}
\begin{abstract}
Let $\kk$ be a commutative noetherian hereditary ring and let $\Q$
be a small, Hom-finite, locally bounded $\kk$-linear category
admitting a Serre functor and satisfying the strong retraction
property, with nilpotent pseudo-radical. For every $\kk$-algebra
$A$, we construct a recollement of the $\Q$-shaped homotopy category $\KI$ of objectwise injective $\Q$-shaped modules by its full subcategory
$\KAI$ of exact objects and the $\Q$-shaped derived category
$\DQ$. The construction uses
complete hereditary cotorsion pairs and does not require $A$
to be noetherian. If $A$ is left noetherian, both $\KI$ and
$\KAI$ are compactly generated by special preenvelopes of
finitely generated stalk modules. We identify $(\KAI)^c$ with the
idempotent completion of $(\KI)^c/L_I((\DQ)^c)$, where
$L_I$ is the fully faithful left adjoint of the localization.
This paper builds on and generalizes works of Krause and Liu.
\end{abstract}
\maketitle

\section{Introduction}

For a left noetherian ring $A$, Krause showed that the
canonical localization from the homotopy category of
complexes of injective $A$-modules to the derived category
fits into a recollement
\cite[Corollary~4.3]{Krause05}:
\[
\xymatrix@C=3.4pc{
 \mathcal K^{\mathrm{ac}}({}_A\Inj)\ar[r]^-{i_*}&
 \mathcal K({}_A\Inj)
 \ar@/_1.4pc/[l]_-{i^*}\ar@/^1.4pc/[l]^-{i^!}
 \ar[r]^-{Q}&
 \mathcal D(A)
 \ar@/_1.4pc/[l]_-{L}\ar@/^1.4pc/[l]^-{R}.}
\]
Here $\mathcal K^{\mathrm{ac}}({}_A\Inj)$ is the full
subcategory of acyclic complexes of injective modules,
called the stable derived category of $A$.
Both $\mathcal K({}_A\Inj)$ and
$\mathcal K^{\mathrm{ac}}({}_A\Inj)$ are compactly generated
\cite[Proposition~2.3 and Corollary~5.4]{Krause05}.
Moreover, the full subcategory of compact objects in
$\mathcal K^{\mathrm{ac}}({}_A\Inj)$ is triangle equivalent
to the idempotent completion of the singularity category
$\mathcal D^b({}_A\modu)/\mathcal K^b({}_A\prj)$
\cite[Corollary~5.4]{Krause05}.
Thus the recollement relates the derived category of $A$
to its singularity category through complexes of injective
modules.
Liu constructed the corresponding recollement for periodic
complexes and proved that all three categories in that
recollement are compactly generated
\cite[Theorem~3.1]{Liu23}.

Holm and J{\o}rgensen introduced the $\Q$-shaped derived
category as a generalization of the derived category of a ring.
Let $\kk$, $\Q$, and $A$ satisfy Setup~\ref{setup:standing},
and let $\Mod{\Q,A}$ denote the category of $\kk$-linear
functors $\Q\to\Mod{A}$.
They constructed projective and injective abelian model
structures on this functor category
\cite[Theorem~6.1]{HJ22}.
These model structures have the same weak equivalences
 and hence the same homotopy category, denoted by $\DQ$, which is called
the $\Q$-shaped derived category of $A$ \cite[Proposition~6.3]{HJ22}.
We write $\EE$ for their common class of trivial objects
and call these objects exact.
By \cite[Theorem~D]{HJ24}, the stalk modules
$S_q(A)$, with $q\in\Q$, form a set of compact generators
for $\DQ$.

The objectwise split exact structure on $\Mod{\Q,A}$
gives a corresponding homotopy category.
With this exact structure, $\Mod{\Q,A}$ is Frobenius,
and its stable category is the $\Q$-shaped homotopy
category $\KQ$ introduced by Slaftsos and Vit\'{o}ria
\cite[Definition~6.9 and Example~6.10]{SV26}.
They proved that the canonical functor
$I_*:\KQ\to\DQ$ is a Verdier localization with fully
faithful left and right adjoints
\cite[Theorem~6.11]{SV26}.
It therefore fits into a recollement
\[
\xymatrix@C=3.5pc{
 \KQA
 \ar[r]^-{j}
 &
 \KQ
 \ar@/_1.5pc/[l]_-{j_l}
 \ar@/^1.5pc/[l]^-{j_r}
 \ar[r]^-{I_*}
 &
 \DQ
 \ar@/_1.5pc/[l]_-{c}
 \ar@/^1.5pc/[l]^-{f}.}
\]
Here $\KQA=\Ker I_*$ is the full subcategory of $\KQ$
formed by the exact $\Q$-shaped modules, and $j$ is
the inclusion.
The functor $c$ is induced by cofibrant replacement
in the projective model structure, and $f$ is induced
by fibrant replacement in the injective model structure.

Throughout the paper, we assume Setup~\ref{setup:standing}.
Let $\I$ be the full subcategory of $\Mod{\Q,A}$ consisting
of objectwise injective $\Q$-shaped modules, and let $\KI$
be its image in $\KQ$.
We show that the inherited exact structure makes $\I$
a Frobenius category with stable category $\KI$.
The canonical localization restricts to a functor
$Q_I:\KI\to\DQ$ whose kernel is $\KAI$, the full subcategory
of exact objectwise injective $\Q$-shaped modules.
Our first main result places this functor in a recollement of triangulated categories.
Fibrant replacement provides a fully faithful right adjoint
to $Q_I$.
The key step is the construction of a fully faithful left
adjoint. To construct the left adjoint, we restrict the complete
hereditary cotorsion pair
$({}^{\perp_1}\EI,\EI)$ in $\Mod{\Q,A}$ to the Frobenius
category $\I$, where $\EI=\EE\cap\I$.
The resulting cotorsion pair induces the Bousfield localizing
pair $({}^\perp\KAI,\KAI)$ in $\KI$.
Consequently, $Q_I$ restricts to a triangle equivalence
${}^\perp\KAI\xrightarrow{\sim}\DQ$ (see Theorem \ref{thm:left-equivalence}).
A quasi-inverse, composed with the inclusion
${}^\perp\KAI\hookrightarrow\KI$, gives the fully faithful
left adjoint $L_I$ of $Q_I$.
Together with the right adjoint, this yields the following theorem; see Theorem \ref{thm:recollement}.
\begin{introthm}\label{intro:recollement}\label{thm:intro-main}
Under Setup~\ref{setup:standing}, there is a recollement of triangulated categories
\[
\xymatrix@C=3.5pc{
 \KAI
 \ar[r]^-{i_*}
 &
 \KI
 \ar@/_1.5pc/[l]_-{i^*}
 \ar@/^1.5pc/[l]^-{i^!}
 \ar[r]^-{Q_I}
 &
 \DQ
 \ar@/_1.5pc/[l]_-{L_I}
 \ar@/^1.5pc/[l]^-{R_I}.}
\]
 Thus
\[
 \KI/
 \KAI
 \simeq \DQ.
\]
\end{introthm}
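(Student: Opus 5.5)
To prove the theorem I will show that $Q_I:\KI\to\DQ$ is a Verdier quotient with kernel $\KAI$ and with fully faithful left and right adjoints. The recollement then follows by the standard Bousfield-localization argument, e.g.\ as in Krause's setting.

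\textbf{Setting up $Q_I$.} First I will check that $\I$ is closed under extensions in $\Mod{\Q,A}$ with the objectwise split exact structure. I will also check that it contains the projective-injective objects of that Frobenius structure, namely the objectwise split ones obtained by inducing from $A$-modules. These are objectwise injective when their values are injective, and every object of $\I$ embeds objectwise-split into such an object with objectwise injective values, and is a quotient of one. Hence $\I$ is Frobenius and $\KI$ is a triangulated subcategory of $\KQ$. Then $Q_I$ is the composite $\KI\hookrightarrow\KQ\xrightarrow{I_*}\DQ$, so it is exact and $\Ker Q_I=\KQA\cap\KI=\KAI$.

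\textbf{Right adjoint.} Fibrant objects of the injective model structure of \cite{HJ22} lie in $\I$, since they are objectwise injective. So the functor $f$ from the recollement of \cite[Theorem~6.11]{SV26} lands in $\KI$. I set $R_I:=f$. Because $\KI$ is a full subcategory of $\KQ$, the adjunction $I_*\dashv f$ restricts to $Q_I\dashv R_I$, and $R_I$ is fully faithful because $f$ is.

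\textbf{Left adjoint.} Here I will use the complete hereditary cotorsion pair $({}^{\perp_1}\EI,\EI)$ in $\Mod{\Q,A}$ and restrict it to $\I$. For $X\in\I$, take a special $\EI$-preenvelope $0\to X\to E\to C\to 0$ with $C\in{}^{\perp_1}\EI$. The object $C$ lies in $\I$, and I must check that the sequence is objectwise split. This holds because $X$ is objectwise injective. So in $\KI$ we get a triangle $X\to E\to C\to$ with $E\in\KAI$.

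Next, $\Ext^1$ in the objectwise split structure agrees with $\Hom_{\KI}(-,\Sigma -)$. Hereditariness then gives $\Hom_{\KI}(C',\Sigma^n E')=0$ for $C'\in{}^{\perp_1}\EI\cap\I$, $E'\in\EI$, and all $n$. This requires checking that ${}^{\perp_1}\EI$ is closed under syzygies and cosyzygies inside $\I$, which I will derive from the fact that the projective-injectives of $\I$ are in both classes. It follows that $({}^\perp\KAI,\KAI)$ is a stable t-structure (Bousfield localizing pair) in $\KI$, so $\KAI\hookrightarrow\KI$ has a left adjoint $i^*$.

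Then $Q_I$ restricted to ${}^\perp\KAI$ is fully faithful, since it is a Verdier quotient by $\KAI$ restricted to the left orthogonal. It is essentially surjective because $\DQ\simeq\KI/\KAI$. For this I still need the quotient statement: $Q_I$ inverts exactly the maps with cone in $\KAI$, and it is essentially surjective because $R_I$ exists. Hence $L_I$ is a quasi-inverse followed by the inclusion, and it is left adjoint to $Q_I$ by orthogonality.

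\textbf{Conclusion.} A localization functor with fully faithful left and right adjoints and kernel $\KAI$ yields the recollement, with $i^!$ obtained from the triangle given by the counit of $Q_I\dashv R_I$ and $i^*$ from the unit of $L_I\dashv Q_I$. The equivalence $\KI/\KAI\simeq\DQ$ is the usual consequence.

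\textbf{Main obstacle.} I expect the hardest step to be showing that the preenvelope sequences are objectwise split and that the cotorsion pair restricts to a \emph{complete} cotorsion pair on the Frobenius category $\I$ whose projective-injectives lie in the kernel. In particular, ${}^{\perp_1}\EI\cap\I$ must contain the projective-injectives of $\I$. I would first try to obtain this from the fact that projective-injectives in the objectwise split structure have the form $\Q(q,-)\otimes M$ and are therefore projective in the model-theoretic sense relative to $\EE$.
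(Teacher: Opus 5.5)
Your architecture matches the paper's: a Frobenius structure on $\I$, then $R_I$ from fibrant replacement, then restricting $({}^{\perp_1}\EI,\EI)$ to $\I$ to get the Bousfield pair $({}^\perp\KAI,\KAI)$, and finally ${}^\perp\KAI\simeq\DQ$ giving $L_I$. However, the orthogonality step is justified incorrectly. You need $\Hom_{\KI}(C',\Sigma^nE')=0$ for \emph{all} $n\in\Z$, where $C'\in\FF=\UU\cap\I$ and $E'\in\EI$.

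\begin{itemize}
\item Hereditariness in $\Mod{\Q,A}$ only covers $n\geq1$, via $\Hom_{\KI}(C',\Sigma^nE')\cong\Ext^n_{\Q,A}(C',E')$. Equivalently, it gives closure of $\FF$ under syzygies.
\item For $n\leq0$, equivalently closure of $\FF$ under cosyzygies, the fact that ${}_{\Q,A}\Inj$ lies in both classes does not help. The conflation $0\to C'\to\mathbb G(C')\to\mathbb C(C')\to0$ only yields $\Ext^1_{\Q,A}(\mathbb C(C'),E')\cong\Hom_{\KI}(C',E')$, which is the case $n=0$ again, so the argument is circular.
\end{itemize}

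The missing input is that $\EI$ is closed under syzygies: $\mathbb K(E')\in\EI$ because $\EE$ is thick and contains ${}_{\Q,A}\Inj$. Equivalently, $\KAI=\Ker Q_I$ is closed under $\Sigma^{-1}$. Since $\EI$ is also closed under $\mathbb C$, every shift $\Sigma^{n-1}E'$ is represented by some $Z\in\EI$, namely an iterate of $\mathbb C$ or $\mathbb K$ applied to $E'$. Then $\Hom_{\KI}(C',\Sigma^nE')\cong\Ext^1_{\Q,A}(C',Z)=0$ for every $n$. This is exactly how the paper proves that $(\FF,\EI)$ is cohereditary.

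There are three smaller repairs.

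First, the remedy in your last paragraph is false. Contractible objects $F_q(M)$ are not in ${}^{\perp_1}\EE$ in general: $F_p(N)\in\EE$, and $\Ext^1_{\Q,A}(F_q(M),F_p(N))\cong\Ext^1_A(M,\Q(p,q)\otimes_\kk N)$ need not vanish. What you actually need is ${}_{\Q,A}\Inj\subseteq{}^{\perp_1}\EI$. This is immediate, since these objects are projective in the Frobenius category $\I$ and $\Ext^1_{\Q,A}=\Ext^1_{\I}$ on $\I$.

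Second, $\I$ does not contain all contractibles; for instance $F_q(A)$ is usually not in $\I$. So full faithfulness of $\underline{\I}\to\KQ$ needs an extra observation: a map $X\to Y$ in $\I$ that factors through a contractible also factors through the objectwise split inflation $X\to\mathbb G(X)$.

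Third, ``$Q_I$ inverts exactly the maps with cone in $\KAI$'' together with essential surjectivity does not give $\KI/\KAI\simeq\DQ$. You also need the induced functor to be fully faithful, which follows from full faithfulness of $R_I$. Alternatively, argue directly as the paper does: for $F,F'\in{}^\perp\KAI$, the unit triangle $F'\to R_IQ_IF'\to C\to\Sigma F'$ has $C\in\KAI$. Hence $\Hom_{\KI}(F,F')\cong\Hom_{\DQ}(Q_IF,Q_IF')$.
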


For chain complexes and periodic complexes over a left
noetherian ring $A$, Theorem~\ref{thm:intro-main} recovers
the recollements of Krause and Liu, respectively;
see Example~\ref{exa:ordinary} or \cite{Krause05, Liu23}.
The recollement itself does not require $A$ to be left
noetherian. We impose this additional hypothesis below
to study compact generators and compact objects.

Assume that $A$ is left noetherian.
We use the complete hereditary cotorsion pairs
$({}^{\perp_1}\I,\I)$ and $({}^{\perp_1}\EI,\EI)$
in $\Mod{\Q,A}$ to construct compact generators for
$\KI$ and $\KAI$.
Completeness provides special preenvelopes
$M\to\rho M$ and $M\to\sigma M$, respectively.
Heredity and the Frobenius structures give natural
stable Hom--Ext isomorphisms.
For a noetherian object $M$, these isomorphisms show that
$\rho M$ and $\sigma M$ are compact, since
$\Ext^1_{\Q,A}(M,-)$ preserves coproducts.
To show that these objects generate $\KI$ and $\KAI$,
we use the fact that an object $X$ is injective if
$\Ext^1_{\Q,A}(M,X)=0$ for every noetherian object $M$.
Finite pseudo-radical filtrations then show that it suffices
to take special preenvelopes of the stalk modules $S_q(M)$,
where $q\in\Q$ and $M$ is a finitely generated left $A$-module.
The following theorem describes the resulting compact
generating sets.

\begin{introthm}\label{thm:B}
Assume Setup~\ref{setup:standing} and let $A$ be left noetherian.
Let $\mathcal M_A$ be a set of representatives of the isomorphism
classes of finitely generated left $A$-modules.
For each $q\in\Q$ and $M\in\mathcal M_A$, choose a special
$\I$-preenvelope $S_q(M)\to\rho S_q(M)$ and a special
$\EI$-preenvelope $S_q(M)\to\sigma S_q(M)$.
Then the following hold.
\begin{prt}
\item
The categories $\KI$ and $\KAI$ are compactly generated
by the sets $\mathcal G_I
 =\{\rho S_q(M)\mid q\in\Q,\ M\in\mathcal M_A\}$ and $
 \mathcal G_{\mathrm{ac}}
 =\{\sigma S_q(M)\mid q\in\Q,\ M\in\mathcal M_A\},$ respectively.
Moreover, $\sigma S_q(M)\cong i^*\rho S_q(M)$ in $\KAI$
for every $q\in\Q$ and $M\in\mathcal M_A$.

\item
The essential image $L_I((\DQ)^c)$ is a thick subcategory
of $(\KI)^c$, given by
\[
 L_I((\DQ)^c)
 =\Thick_{\KI}\{\rho S_q(A)\mid q\in\Q\}.
\]
The functor $i^*$ preserves compact objects and induces
a triangle equivalence up to direct summands
\[
 \overline{i^*}:
 (\KI)^c/L_I((\DQ)^c)
 \longrightarrow(\KAI)^c,
 \qquad C\longmapsto i^*C.
\]
Explicitly, $\overline{i^*}$ is fully faithful, and every
object of $(\KAI)^c$ is a direct summand of $i^*C$
for some $C\in(\KI)^c$.
\end{prt}
\end{introthm}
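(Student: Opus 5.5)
For part (a) I would first establish the stable Hom--Ext isomorphisms. For any $M\in\Mod{\Q,A}$ and a special $\I$-preenvelope $0\to M\to\rho M\to C\to 0$ with $C\in{}^{\perp_1}\I$, and for $X\in\I$, I expect
\[
 \Hom_{\KI}(\rho M,X)\cong \Ext^1_{\Q,A}(C,X)\cong\Ext^2_{\Q,A}(?)\ \text{or directly}\ \cong \Hom_{\KI}(\Omega^{-1}?,\dots).
\]
The cleaner route is to show $\Hom_{\KI}(\rho M,\Sigma X)\cong\Ext^1_{\Q,A}(\rho M,X)\cong \Ext^1_{\Q,A}(M,X)$, with the adjustment for shifts handled by cosyzygies inside the Frobenius category $\I$. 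Here the first isomorphism holds because $\I$ is Frobenius with the objectwise split structure, and $\Ext^1$ in that exact structure agrees with stable Hom into the suspension. The second comes from applying $\Hom(-,X)$ to the preenvelope sequence and using that $C\in{}^{\perp_1}\I$ and heredity, so that $\Ext^{i}(C,X)=0$ for $i\ge1$. The $\EI$ version runs the same way with $\sigma M$ and $X\in\EI$.

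When $M$ is noetherian and $A$ is left noetherian, $\Ext^1_{\Q,A}(M,-)$ commutes with coproducts. Coproducts in $\KI$ are computed in $\Mod{\Q,A}$, because coproducts of objectwise injectives remain objectwise injective over a noetherian ring. Together these give compactness of $\rho M$ in $\KI$, and likewise of $\sigma M$ in $\KAI$; for the latter, coproducts of exact objects are exact, since $\EE$ is closed under coproducts as the trivial class.

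For generation, suppose $X\in\KI$ satisfies $\Hom(\Sigma^n\rho S_q(M),X)=0$ for all $n$. Then $\Ext^1_{\Q,A}(S_q(M),\Sigma^{n}X)=0$. Using finite pseudo-radical filtrations, every noetherian object is an iterated extension of stalks $S_q(M)$, so $\Ext^1(N,X)=0$ for all noetherian $N$. By the stated criterion, $X$ is injective in $\Mod{\Q,A}$, hence zero in $\KI$. The same argument in $\KAI$ applies to $\sigma$.

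For the identification $\sigma S_q(M)\cong i^*\rho S_q(M)$, I would take the left Bousfield triangle $L\to\rho S_q(M)\to i^*\rho S_q(M)$ with $L\in{}^\perp\KAI$. The plan is to compare it with $\sigma$ via Hom-testing: both objects corepresent $\Ext^1(S_q(M),-)$ restricted to $\KAI$. Yoneda, applied on the compactly generated category $\KAI$, then yields the isomorphism provided the map is natural; I would build the map from the factorization $S_q(M)\to\rho S_q(M)\to\sigma S_q(M)$, which exists because $\sigma S_q(M)\in\I$.

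For part (b), $L_I$ preserves compacts because its right adjoint $Q_I$ preserves coproducts. Moreover $L_I S_q(A)\cong\rho S_q(A)$: indeed $\rho S_q(A)$ lies in ${}^\perp\KAI$, because $\Hom(\rho S_q(A),\Sigma E)\cong\Ext^1(S_q(A),E)=0$ for $E\in\EI$, the stalk $S_q(A)$ being cofibrant and hence in ${}^{\perp_1}\EE$. Also $Q_I\rho S_q(A)\cong S_q(A)$. Since $(\DQ)^c=\Thick\{S_q(A)\}$ by the compact generation theorem of Holm--J{\o}rgensen, we get $L_I((\DQ)^c)=\Thick\{\rho S_q(A)\}$.

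The equivalence up to summands is then Neeman's localization theorem applied to the recollement. Here $\KI$ is compactly generated, and the localizing subcategory generated by the compact set $L_I((\DQ)^c)$ is ${}^\perp\KAI$, whose Verdier quotient is $\KAI$ via $i^*$. Neeman--Thomason then gives exactly that $\overline{i^*}$ is fully faithful with image dense up to summands.

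I expect the main obstacle to be the careful verification of the Hom--Ext isomorphism in the Frobenius category $\I$. This includes the degree bookkeeping, and checking that ${}^\perp\KAI=\Loc\{\rho S_q(A)\}$ so that Neeman's theorem applies.
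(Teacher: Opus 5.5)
Your proposal is correct and follows the paper's proof in all essentials. The two points you leave implicit are settled exactly as in the paper:
\begin{itemize}
\item $Q_I\rho S_q(A)\cong S_q(A)$, because $\WI={}^{\perp_1}\I\subseteq{}^{\perp_1}(\EE^{\perp_1})=\EE$;
\item $\Im L_I=\Loc_{\KI}\{L_IS_q(A)\mid q\in\Q\}$, because $L_I$ is fully faithful, preserves coproducts, and $\DQ=\Loc\{S_q(A)\mid q\in\Q\}$.
\end{itemize}

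The only real deviation is how you handle $\KAI$. You obtain compactness and generation of $\mathcal G_{\mathrm{ac}}$ directly from the $\sigma$-version of the Hom--Ext isomorphism. The paper instead gets them for $i^*\mathcal G_I$ via the adjunction $(i^*,i_*)$, and then identifies $i^*\rho S_q(M)\cong\sigma S_q(M)$ by Yoneda from the natural isomorphism of corepresented functors. Your explicit comparison map $\rho S_q(M)\to\sigma S_q(M)$ is valid but not needed.
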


The proof of Theorem~\ref{thm:B} uses the stable Hom--Ext
isomorphisms and finite pseudo-radical filtrations of
noetherian $\Q$-shaped modules to obtain compact generators.
The description of compact objects follows from Neeman's
localization theorem \cite[Theorem~2.1]{Neeman92}.
For chain complexes, the quotient in Theorem~\ref{thm:B}
is triangle equivalent to the classical singularity category
$\mathcal D^b({}_A\modu)/\mathcal K^b({}_A\prj)$.
For general $\Q$, the recollement determines the corresponding
quotient through the subcategory $L_I((\DQ)^c)$.
Together with \cite[Theorem~D]{HJ24}, our results give
compact generators for all three terms of the recollement
and describe the relation between their compact objects.

Section~\ref{sec:preliminaries} fixes the standing hypotheses
and notation and recalls the $\Q$-shaped homotopy and
derived categories.
Section~\ref{sec:recollement} establishes the Frobenius
structures on $\I$ and $\EI$ and uses cotorsion pairs
to construct the adjoints of $Q_I$, proving the recollement
in Theorem~\ref{thm:intro-main}.
Section~\ref{sec:compact} constructs explicit compact
generators for $\KI$ and $\KAI$, identifies the essential
image $L_I((\DQ)^c)$, and proves the quotient description
of $(\KAI)^c$ in Theorem~\ref{thm:B}.
It also treats the case of finite left global dimension
and applies the results to chain complexes, periodic
complexes, and $N$-complexes.

\section{Preliminaries and notation}\label{sec:preliminaries}

Throughout this paper, $\kk$ is a commutative ring, $A$ is an associative $\kk$-algebra, and  $\Q$ is a small $\kk$-preadditive category.
We denote by $\Mod{A}$ the category of left $A$-modules and by $_{A}\mathsf{Prj}$ the subcategory of $\Mod{A}$ consisting of projective left $A$-modules. We work under the following standing assumptions.

\begin{setup}\label{setup:standing}
Throughout the paper, $\kk$ denotes a non-trivial
commutative noetherian hereditary ring and $A$ a unital
$\kk$-algebra. We assume
that the small $\kk$-pre-additive category $\Q$ satisfies the
following conditions; see \cite[Setup~2.5, Definition~7.3]{HJ22}.
\begin{enumerate}
\item[(1)] \emph{Hom-finiteness:} Each hom $\kk$-module $\Q(p,q)$
is finitely generated and projective.
\item[(2)] \emph{Local boundedness:} For each $q\in\Q$, the sets
\[
 N_-(q)=\{p\in\Q\mid\Q(p,q)\ne0\}\quad\text{and}\quad
 N_+(q)=\{p\in\Q\mid\Q(q,p)\ne0\}
\]
are finite.
\item[(3)] \emph{Existence of a Serre functor:} There is a
$\kk$-linear auto-equivalence $\mathbb S:\Q\to\Q$ and natural
isomorphisms
\[
 \Q(p,q)\cong\Hom_{\kk}(\Q(q,\mathbb S(p)),\kk).
\]
\item[(4)] \emph{Strong Retraction Property:} For any object $q$ in $\Q$, the unit map $\Bbbk\to\Q(q,q)$ (via $x\mapsto x\cdot \mathrm{id}_q$) has a $\Bbbk$-module retraction; whence there is a $\Bbbk$-module decomposition  $$\Q(q,q)=(\Bbbk\cdot{\rm id}_q)\oplus\mathfrak{r}_q$$ for $q\in \Q$ such that
\begin{itemize}
\item  $\mathfrak{r}_q\circ\mathfrak{r}_q\subseteq\mathfrak{r}_q$ for all $q$, and
\item $\Q(q,p)\circ \Q(p,q)\subseteq\mathfrak{r}_p$ for all $p\neq q$.
\end{itemize}
These submodules define the pseudo-radical ideal $\rQ$ by
$\rQ(q,q)=\mathfrak r_q$ and $\rQ(p,q)=\Q(p,q)$ for $p\ne q$.
\item[(5)] \emph{Nilpotence:} There is an integer $d\geq1$ such
that $\rQ^d=0$.
\end{enumerate}
All results are understood under these hypotheses. The additional
assumption that $A$ is left noetherian is imposed in
Section~\ref{sec:compact} and wherever explicitly stated.
\end{setup}

\begin{bfhpg}[\bf The category of $\Q$-shaped modules]
The category $\Mod{\Q,A}$ of $\kk$-linear functors from $\Q$ to the
 category $\Mod{A}$ of left $A$-modules is called the \emph{category of $\Q$-shaped $A$-modules}.
When $A=\kk$, we write $\Mod{\Q}$ instead of $\Mod{\Q,\kk}$.
The functor category $\Mod{\Q,A}$ is a Grothendieck category
with enough projectives.
Following \cite{HJ22}, we write ${}_{\Q,A}\Prj$ and
${}_{\Q,A}\Inj$ for its full subcategories of projective
and injective objects, respectively.
For left $A$-modules, the corresponding subcategories
are denoted by ${}_A\Prj$ and ${}_A\Inj$.
We write ${}_A\modu$ and ${}_A\mathrm{prj}$ for the full subcategories of finitely generated left $A$-modules and finitely generated projective left $A$-modules, respectively.
We use $\Hom_{\Q,A}$ and $\Ext^i_{\Q,A}$ for morphism groups
and Ext groups in $\Mod{\Q,A}$.

\begin{ipg}\label{adjoint triple}
 Recall from \cite[Corollary 3.9]{HJ22} that for every object $q$ in $\Q$ there is an adjoint triple $(F_{q},E_{q},G_{q})$ as follows:
  \begin{equation*}
  \xymatrix@C=4pc{
    \Mod{\Q,A}
    \ar[r]^-{E_{q}}
    &
    \Mod{A}
    \ar@/_1.8pc/[l]_-{F_{q}}
    \ar@/^1.8pc/[l]^-{G_{q}}
  }
  \qquad \text{given by} \qquad
  {\setlength\arraycolsep{1.5pt}
   \renewcommand{\arraystretch}{1.2}
  \begin{array}{rcl}
  F_q(M) &=& \Q(q,-) \otimes_\kk M \\
  E_q(X) &=& X\mspace{1.5mu}(q) \\
  G_{q}(M) &=& \Hom_{\kk}(\Q(-, q),M)\;
  \end{array}
  }
  \end{equation*}
for $M\in \Mod{A}$ and $X\in \Mod{\Q,A}$.
Here $E_{q}$ is the evaluation functor at $q.$
\end{ipg}

All three functors are exact
\cite[Corollary~3.9]{HJ22}.
Moreover, the objects $F_{q}(A)$, with $q\in\Q$, form a set
of finitely presentable projective generators of $\Mod{\Q,A}$
\cite[Proposition~3.12]{HJ22}.

The exact adjunctions yield the following isomorphisms; see \cite[Lemma 5.1]{HJ19}.

\begin{lem}\label{lem:ext-adjunction}
For $M\in \Mod{A}$, $X\in\Mod{\Q,A}$, and $n\geq0$, there are
natural isomorphisms
\[
\begin{aligned}
 \Ext^n_{\Q,A}(F_q(M),X)&\cong\Ext^n_A(M,X(q)),\\
 \Ext^n_{\Q,A}(X,G_q(M))&\cong\Ext^n_A(X(q),M).
\end{aligned}
\]
\end{lem}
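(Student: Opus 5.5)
The plan is to deduce both isomorphisms from the exact adjoint triple $(F_q,E_q,G_q)$ of \ref{adjoint triple}. For $n=0$ they are just the adjunction isomorphisms $\Hom_{\Q,A}(F_q(M),X)\cong\Hom_A(M,X(q))$ and $\Hom_{\Q,A}(X,G_q(M))\cong\Hom_A(X(q),M)$, natural in both variables. For $n\geq 1$ I will pass to derived functors, using the general principle that exact adjoints preserve the relevant projective or injective objects.

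For the first isomorphism, take a projective resolution $P_\bullet\to M$ in $\Mod{A}$. Since $F_q$ is left adjoint to the exact functor $E_q$, it preserves projectives: $\Hom_{\Q,A}(F_q(P),-)\cong\Hom_A(P,E_q(-))$ is a composite of exact functors. Since $F_q$ is itself exact, $F_q(P_\bullet)\to F_q(M)$ is a projective resolution in $\Mod{\Q,A}$. Applying $\Hom_{\Q,A}(-,X)$ and the adjunction termwise gives an isomorphism of complexes
\[
\Hom_{\Q,A}(F_q(P_\bullet),X)\cong\Hom_A(P_\bullet,X(q)).
\]
Taking cohomology yields $\Ext^n_{\Q,A}(F_q(M),X)\cong\Ext^n_A(M,X(q))$. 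Naturality follows from the naturality of the adjunction and from comparison of resolutions.

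The second isomorphism is dual. Since $\Mod{A}$ has enough injectives, take an injective resolution $M\to J^\bullet$. The functor $G_q$ is right adjoint to the exact functor $E_q$, so it preserves injectives; being exact, it sends $J^\bullet$ to an injective resolution of $G_q(M)$ in the Grothendieck category $\Mod{\Q,A}$. Applying $\Hom_{\Q,A}(X,-)$ and the adjunction termwise gives
\[
\Hom_{\Q,A}(X,G_q(J^\bullet))\cong\Hom_A(X(q),J^\bullet),
\]
and taking cohomology gives the claim.

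There is no serious obstacle. The only point needing care is that the adjoints preserve projectives and injectives, which rests on the exactness of $E_q$ from \cite[Corollary~3.9]{HJ22}. One should also check that Ext computed by resolutions agrees with Yoneda Ext in $\Mod{\Q,A}$. This holds because the category has enough projectives and enough injectives.
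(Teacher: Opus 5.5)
Your argument is correct, and it is essentially the approach the paper relies on. The paper gives no proof of its own: it says the exact adjunctions $(F_q,E_q)$ and $(E_q,G_q)$ yield these isomorphisms and cites \cite[Lemma~5.1]{HJ19}. Your resolution argument, using that an exact left (respectively right) adjoint of the exact functor $E_q$ preserves projectives (respectively injectives), spells out that standard reasoning.
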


Following \cite[Definition~7.9]{HJ22}, the left and right stalks
at $q\in\Q$ are
\[
 S\langle q\rangle=\Q(q,-)/\rQ(q,-)\in\Mod{\Q}
 \qquad\text{and}\qquad S\{q\}=\Q(-,q)/\rQ(-,q)\in\operatorname{Mod}_{\Q}.
\]
Recall from \cite[Proposition 7.15]{HJ22} that for every object $q$ in $\Q$, there is an adjoint triple $(C_{q},S_{q},K_{q})$ as follows:
  \begin{equation*}
  \xymatrix@C=4pc{
    \Mod{A}
    \ar[r]^-{S_{q}}
    &
    \Mod{\Q,A}
    \ar@/_1.8pc/[l]_-{C_{q}}
    \ar@/^1.8pc/[l]^-{K_{q}}
  }
  \qquad \text{given by} \qquad
  {\setlength\arraycolsep{1.5pt}
   \renewcommand{\arraystretch}{1.2}
  \begin{array}{rcl}
  C_q(X) &=& S\{q\} \otimes_\Q X \\
  S_q(M) &=& S\langle q\rangle\otimes_\kk M \\
  K_{q}(X) &=& \Hom_{\Q}(S\langle q\rangle,X)\;
  \end{array}
  }
  \end{equation*}
for $M\in \Mod{A}$ and $X\in \Mod{\Q,A}$.
In particular, the stalk functor $S_{q}$ is exact.
\end{bfhpg}

\begin{bfhpg}[\bf The Q-shaped homotopy category]
We first recall the terminology used in this construction.
Following \cite{Buhler10}, an exact category is an additive
category equipped with a class of kernel--cokernel pairs
satisfying Quillen's axioms.
Such a pair $X\xrightarrow{i}Y\xrightarrow{p}Z$ is called
a conflation; the morphisms $i$ and $p$ are called an
inflation and a deflation, respectively.
An exact category is Frobenius if it has enough projectives
and enough injectives, and these two classes coincide.
Their common objects are called \emph{projective-injective}.
The stable category has the same objects as the Frobenius
category. Its morphism groups are obtained by taking the
quotient modulo morphisms that factor through projective-injective
objects.
It has a canonical triangulated structure, with
distinguished triangles induced by conflations
\cite[Chapter~I, Section~2]{Happel88}.

To define the $\Q$-shaped homotopy category introduced by
Slaftsos and Vit\'{o}ria \cite{SV26}, we equip $\Mod{\Q,A}$ with the
objectwise split exact structure.
Its conflations are the short exact sequences
\[
 0\longrightarrow X'\xrightarrow{u}X
 \xrightarrow{v}X''\longrightarrow0
\]
for which the evaluated sequence
\[
 0\longrightarrow X'(q)\xrightarrow{u(q)}X(q)
 \xrightarrow{v(q)}X''(q)\longrightarrow0
\]
splits in $\Mod{A}$ for every $q\in\Q$.
These splittings are not required to be natural in $q$,
so the original sequence need not split in $\Mod{\Q,A}$.
We denote the associated exact category by
$\Mod{\Q,A}_{\mathrm{os}}$.

Under the standing hypotheses, $\Mod{\Q,A}_{\mathrm{os}}$
is Frobenius \cite[Example~4.18(c)]{SV26}.
Its projective-injective objects are called contractible; their full subcategory is denoted by $\operatorname{Ctr}_{\Q}(A).$ Further characterizations are given in Subsection~\ref{subsec:contractible}.
The stable category is the \emph{$\Q$-shaped homotopy category},
denoted by $\KQ$; see \cite[Definition~6.9]{SV26}.
Thus \[
 \Hom_{\KQ}(X,Y)
 =
 \Hom_{\Q,A}(X,Y)/
 [\operatorname{Ctr}_{\Q}(A)](X,Y),
\]
where $[\operatorname{Ctr}_{\Q}(A)](X,Y)$ consists of the
morphisms that factor through a contractible object.
Distinguished triangles in $\KQ$ are induced, up to isomorphism, by objectwise split
short exact sequences.
By \cite[Construction 5.6 and Proposition 5.7]{HJ24} there are objectwise split sequences
\begin{align*}
 0&\longrightarrow\mathbb K(X)\longrightarrow\mathbb F(X)
 \xrightarrow{\epsilon_X}X\longrightarrow0,\\
 0&\longrightarrow X\xrightarrow{\eta_X}\mathbb G(X)
 \longrightarrow\mathbb C(X)\longrightarrow0,
\end{align*}
where \begin{equation*}
 \mathbb F(X)=\bigoplus_{q\in\Q}F_q(X(q))
 \qquad\text{and}\qquad \mathbb G(X)=\prod_{q\in\Q}G_q(X(q)).
\end{equation*}
and $\epsilon$ and $\eta$ are the unique natural transformations defined by the universal properties of coproducts and products such that $\epsilon_{q}^{X}$ is the counit of $(F_{q},E_{q})$ at $X$ and $\eta_{q}^{X}$ is the unit of $(E_{q},G_{q})$ at $X$, respectively.
The remaining objects in the sequences are defined by $\mathbb C(X)=\Coker \eta_{X}$ and $\mathbb K(X)=\Ker \epsilon_{X}$.

These sequences above provide enough projectives and
enough injectives for $\Mod{\Q,A}_{\mathrm{os}}$. They also identify
suspension and inverse suspension in $\KQ$:
\[
 \Sigma X\cong\mathbb C(X)
 \qquad\text{and}\qquad
 \Sigma^{-1}X\cong\mathbb K(X)
 \quad\text{in }\KQ.
\]
\end{bfhpg}

\begin{bfhpg}[\bf The Q-shaped derived category ]
We next recall the abelian model structures on $\Mod{\Q,A}$
whose common homotopy category is the $\Q$-shaped derived
category $\DQ$. Their construction uses cotorsion pairs
with respect to the abelian exact structure, for which we
fix the following terminology.

\begin{dfn}\label{dfn:cotorsion}
Let $\mathcal C$ be an exact category, and let
$\Ext^n_{\mathcal C}$ denote the Yoneda extension groups
with respect to its exact structure. For a class
$\mathcal X$ of objects of $\mathcal C$, set
\[
\begin{aligned}
 {}^{\perp_1}\mathcal X
 &=
 \{U\in\mathcal C\mid
   \Ext^1_{\mathcal C}(U,X)=0
   \text{ for all }X\in\mathcal X\},\\
 \mathcal X^{\perp_1}
 &=
 \{V\in\mathcal C\mid
   \Ext^1_{\mathcal C}(X,V)=0
   \text{ for all }X\in\mathcal X\}.
\end{aligned}
\]
A pair $(\mathcal U,\mathcal V)$ of classes of objects is
a \emph{cotorsion pair} in $\mathcal C$ if
$\mathcal U={}^{\perp_1}\mathcal V$ and
$\mathcal V=\mathcal U^{\perp_1}$.
It is \emph{complete} if every $X\in\mathcal C$ admits
conflations
\[
 0\longrightarrow V_X\longrightarrow U_X
 \longrightarrow X\longrightarrow0
 \qquad\text{and}\qquad
 0\longrightarrow X\longrightarrow V^X
 \longrightarrow U^X\longrightarrow0
\]
with $U_X,U^X\in\mathcal U$ and
$V_X,V^X\in\mathcal V$.
The morphisms $U_X\longrightarrow X$ and
$X\longrightarrow V^X$ are called a special
$\mathcal U$-precover and a special
$\mathcal V$-preenvelope, respectively.
The pair is \emph{hereditary} if
$\Ext^n_{\mathcal C}(U,V)=0$ for all $n\geq1$,
$U\in\mathcal U$, and $V\in\mathcal V$.
We say that it is \emph{generated by a set}
$\mathcal S\subseteq\mathcal C$ if
$\mathcal V=\mathcal S^{\perp_1}$.
\end{dfn}

Following \cite[Definition~4.1]{HJ22}, we define
\[
 \EE=\{X\in\Mod{\Q,A}\mid X^\natural
       \text{ has finite projective dimension in }\Mod{\Q}\},
\]
where $(-)^\natural$ is the forgetful functor.
Objects of $\EE$ are called \emph{exact}, and they are precisely the objects whose
$\Q$-shaped homology, equivalently cohomology, vanishes
in all positive degrees \cite[Theorem~7.1]{HJ22}.
In particular, by the proof of \cite[Theorem D]{HJ24}, one has $X\in\EE$ if and only if $\Ext^1_{\Q,A}(S_q(A),X)=0$ for every $q\in\Q.$
By \cite[Theorems~4.4 and~6.1]{HJ22}, the class $\EE$ is
thick, and the hereditary Hovey triples
\[
 ({}^{\perp_1}\EE,\EE,\Mod{\Q,A})
 \qquad\text{and}\qquad
 (\Mod{\Q,A},\EE,\EE^{\perp_1})
\]
define the projective and injective abelian model structures,
respectively. These model structures have the same weak
equivalences \cite[Proposition~6.3]{HJ22}.
Their common homotopy category is the
\emph{$\Q$-shaped derived category of $A$}, denoted by $\DQ$.
\end{bfhpg}

To describe the relation between $\KQ$ and $\DQ$,
we recall the definition of a recollement of triangulated categories.

\begin{dfn}
A \emph{recollement} of triangulated categories is a diagram
of exact functors
\[
\xymatrix@C=3.5pc{
 \mathcal T'
 \ar[r]^-{i_*}
 &
 \mathcal T
 \ar@/_1.5pc/[l]_-{i^*}
 \ar@/^1.5pc/[l]^-{i^!}
 \ar[r]^-{j^*}
 &
 \mathcal T''
 \ar@/_1.5pc/[l]_-{j_!}
 \ar@/^1.5pc/[l]^-{j_*}
}
\]
such that:
\begin{enumerate}
\item
$(i^*,i_*,i^!)$ and $(j_!,j^*,j_*)$ are adjoint triples;
\item
$i_*$, $j_!$, and $j_*$ are fully faithful;
\item
$\mathrm{Im} i_*=\Ker j^{*}.$
\end{enumerate}
\end{dfn}

Under Setup~\ref{setup:standing}, by \cite[Theorem~6.11]{SV26},  there is a recollement of triangulated categories
\[
\xymatrix@C=3.5pc{
 \mathcal K_{\Q}^{\mathrm{ac}}(A)
 \ar[r]^-{j}
 &
 \KQ
 \ar@/_1.5pc/[l]_-{j_l}
 \ar@/^1.5pc/[l]^-{j_r}
 \ar[r]^-{I_*}
 &
 \DQ
 \ar@/_1.5pc/[l]_-{c}
 \ar@/^1.5pc/[l]^-{f}.
}
\]
Here $I_*$ is the canonical localization,
$\mathcal K_{\Q}^{\mathrm{ac}}(A)=\Ker I_*$, and $j$ is
the inclusion. The fully faithful left adjoint $c$ is
induced by cofibrant replacement in the projective model
structure $({}^{\perp_1}\EE,\EE,\Mod{\Q,A})$, while the fully faithful right adjoint $f$
is induced by fibrant replacement in the injective model
structure $(\Mod{\Q,A},\EE,\EE^{\perp_1})$. In Section~\ref{sec:recollement}, we study the restriction
$Q_I=I_*|_{\KI}:\KI\longrightarrow\DQ$ and construct the
corresponding recollement with middle term $\KI$.

\section{A recollement for $\Q$-shaped homotopy categories of injectives}
\label{sec:recollement}

Under Setup~\ref{setup:standing}, let $Q_I:\KI\to\DQ$
denote the restriction of the canonical localization
$I_*:\KQ\to\DQ$. Its kernel is $\KAI$.
We construct fully faithful left and right adjoints to $Q_I$
and thereby obtain a recollement relating $\KAI$, $\KI$
and $\DQ$. No noetherian hypothesis on $A$ is required.

\subsection{Contractible objects and the Frobenius structure on $\I$}
\label{subsec:contractible}
We first describe the contractible objects and then establish the Frobenius structure on the full subcategory $\I=\{X\in \Mod{\Q,A} \mid X(q)\in {}_A\Inj\}$.
This identifies its stable category with $\KI$.

\begin{prp}\label{prp:contractible}
Let $\Mod{\Q,A}_{\os}$ denote $\Mod{\Q,A}$ endowed with the
objectwise split exact structure. For an object
$X\in\Mod{\Q,A}$, the following statements are equivalent.
\begin{eqc}
\item $X$ is projective-injective in $\Mod{\Q,A}_{\os}$.
\item There exists a family $(M_q)_{q\in\Q}$ of left
$A$-modules such that
\[
 X\cong\prod_{q\in\Q}G_q(M_q).
\]
\item There is an isomorphism
\[
 X\cong\prod_{q\in\Q}G_qK_q(X).
\]
\item There exists a family $(N_q)_{q\in\Q}$ of left
$A$-modules such that
\[
 X\cong\bigoplus_{q\in\Q}F_q(N_q).
\]
\item There is an isomorphism
\[
 X\cong\bigoplus_{q\in\Q}F_qC_q(X).
\]
\item For every $q\in\Q$ and every $M\in\Mod{A}$,
\[
 \Ext^1_{\Mod{\Q,A}_{\os}}(S_q(M),X)=0.
\]
\item For every $q\in\Q$ and every $M\in\Mod{A}$,
\[
 \Ext^1_{\Mod{\Q,A}_{\os}}(X,S_q(M))=0.
\]
\end{eqc}
\end{prp}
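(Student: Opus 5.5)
The plan is to prove the cycle of implications (i)$\Rightarrow$(vi),(vii)$\Rightarrow$(iii)$\Rightarrow$(ii)$\Rightarrow$(i), and then to bring in (iv),(v) through the Serre functor. For the identification of contractibles with products, two inputs will be used.

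\emph{First input: stalks against $G_p$.} By the adjunctions $(S_q,K_q)$ and $(E_p,G_p)$, and since $S\langle q\rangle(p)=0$ for $p\neq q$ and $S\langle q\rangle(q)=\kk$, one gets
\[
\Hom_{\Q,A}(S_q(N),G_p(M))\cong\Hom_A(S\langle q\rangle(p)\otimes_\kk N,M).
\]
Hence $K_qG_p(M)\cong M$ if $p=q$, and $K_qG_p(M)=0$ otherwise. Because $K_q$ preserves products, this gives $K_q\bigl(\prod_p G_p(M_p)\bigr)\cong M_q$.

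\emph{Second input: the two sums agree.} The Serre functor gives $G_{\mathbb S q}(M)\cong F_q(M)$, via $\Hom_\kk(\Q(-,\mathbb Sq),M)\cong\Q(q,-)\otimes_\kk M$; this uses Hom-finiteness and projectivity of the hom modules. Local boundedness makes the relevant product locally finite, so that $\prod_q G_q(M_q)\cong\bigoplus_q G_q(M_q)$ objectwise. Re-indexing along $\mathbb S$ therefore gives (ii)$\Leftrightarrow$(iv). Dually, using $C_qF_p=\delta_{pq}$, one gets (iii)$\Leftrightarrow$(v).

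\emph{The easy implications.} (iii)$\Rightarrow$(ii) is trivial. For (ii)$\Rightarrow$(i), each $G_q(M)$ is injective in $\Mod{\Q,A}_{\os}$: by Lemma~\ref{lem:ext-adjunction}, an objectwise split conflation ending at $X$ is sent by $\Hom(-,G_q M)=\Hom_A((-)(q),M)$ to a short exact sequence. Injectives are closed under products, and since the structure is Frobenius (\cite[Example~4.18(c)]{SV26}), injective equals projective. (i)$\Rightarrow$(vi),(vii) is immediate.

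\emph{Main step: (vi) implies (iii).} Apply $\Hom_{\Q,A}(S_q(M),-)$ to the conflation $0\to X\xrightarrow{\eta_X}\mathbb G(X)\to\mathbb C(X)\to0$ from \cite{HJ24}; since $\mathbb G(X)$ is contractible, $\Ext^1_{\os}(S_q(M),X)$ is the cokernel of the induced map on $\Hom$. Using $K_q\mathbb G(X)\cong X(q)$ from the first input, condition (vi) for all $M$ becomes the statement that
\[
0\to K_q(X)\to X(q)\to K_q\mathbb C(X)\to 0
\]
is exact for every $q$. I would then build the candidate comparison map $X\to\prod_q G_qK_q(X)$. Its components are adjoint to retractions $X(q)\to K_q(X)$, and these retractions are to be chosen compatibly using the vanishing above. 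I would show the map is an isomorphism by induction along the pseudo-radical filtration $\rQ^iX$, which is finite by nilpotence $\rQ^d=0$. The factors $\rQ^iX/\rQ^{i+1}X$ are killed by $\rQ$, hence are sums of stalks, and there (vi) can be applied degreewise.

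\emph{(vii) implies (iii).} This is dual: use the projective conflation $0\to\mathbb K(X)\to\mathbb F(X)\to X\to 0$ and $C_q$ in place of $K_q$, which yields (v), and then pass to (iii) by the equivalence established above.

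\emph{Main obstacle.} The hard point is showing that a direct summand of $\prod_q G_q(M_q)$ again has this form; this is exactly what is needed for an arbitrary contractible object, which a priori is only a summand of $\mathbb G(X)$. My first attempt will use the functor $Y\mapsto(K_qY)_q$ restricted to objects of the form $\prod G_q(M_q)$. This functor is full, by the first input, and its kernel on morphisms consists of maps factoring through the pseudo-radical, which form a nilpotent ideal since $\rQ^d=0$. Idempotents then lift along this nilpotent ideal, uniquely up to conjugacy. Consequently, for an idempotent $e$ on $Y=\prod_q G_q(M_q)$ with image $X$, one gets $X\cong\prod_qG_q(\operatorname{Im}K_q(e))=\prod_q G_qK_q(X)$, which is (iii).
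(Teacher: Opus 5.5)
Your two inputs, the easy implications, and the equivalences (ii)$\Leftrightarrow$(iv) and (iii)$\Leftrightarrow$(v) are fine. Your retraction argument also correctly yields split sequences $0\to K_q(X)\to X(q)\to K_q\mathbb C(X)\to0$. No compatibility between the retractions is needed, since a morphism $X\to\prod_q G_qK_q(X)$ is just a family of $A$-linear maps $r_q\colon X(q)\to K_q(X)$.

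\textbf{The gap.} It lies in the claim that the resulting map $\phi\colon X\to Y:=\prod_q G_qK_q(X)$ is an isomorphism. The proposed ``induction along $\rQ^iX$, applying (vi) degreewise'' does not work as stated. Condition (vi) controls extensions of stalks by $X$, not the radical layers of $X$, and it is not inherited by those layers. Moreover, the inclusions $\rQ^{i+1}X\subseteq\rQ^iX$ are in general not objectwise split (for complexes, $(\rQ X)(q)$ is the module of boundaries), so there is no d\'evissage for $\Ext^1_{\os}$ along this filtration.

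\textbf{How to close it.} Your retractions only give that $K_q(\phi)$ is an isomorphism for every $q$. Two further ingredients are needed.
\begin{itemize}
\item A Nakayama-type lemma: if $K_q(W)=0$ for all $q$, then $W=0$. This is where $\rQ^d=0$ enters: for $j$ maximal with $\rQ^jW\neq0$, one has $(\rQ^jW)(q)\subseteq K_q(W)$.
\item For surjectivity, the vanishing of $\Ext^1_{\Q,A}(S_q(A),X)$ in the abelian category. This follows from (vi) only after observing that every extension of $S_q(A)$ by $X$ is objectwise split, because $S_q(A)$ is objectwise projective.
\end{itemize}
With these, use $\Hom_{\Q,A}(S_q(A),-)\cong K_q$. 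Applying it to $0\to X\to Y\to\Coker\phi\to0$ gives $K_q(\Coker\phi)=0$. Left exactness gives $K_q(\Ker\phi)=\Ker K_q(\phi)=0$. The lemma then yields $\Ker\phi=0=\Coker\phi$.

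\textbf{The same gap in (vii).} Your one-word ``dual'' treatment of (vii) hides the same step. There the required input is $L_1C_q(X)=0$. The paper extracts it from (vii) by taking $M=J$ an injective cogenerator, so that all extensions of $X$ by $S_q(J)$ are objectwise split. It then concludes with the $C_q$-Nakayama lemma \cite[Proposition~7.19]{HJ22}, after proving epimorphy of $\bigoplus_qF_qC_q(X)\to X$ by right exactness of $C_q$. For (vi) itself, the paper cites \cite[Proposition~4.19]{SV26}.

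\textbf{On your ``main obstacle'' paragraph.} It puts the difficulty in the wrong place. The idempotent-lifting argument is sound. On endomorphisms of $\prod_qG_q(M_q)$, the kernel of $Y\mapsto(K_qY)_q$ is an ideal $\mathfrak J$ with $\mathfrak J^d=0$, and idempotents congruent modulo a nilpotent ideal are conjugate. This gives a self-contained proof of (i)$\Rightarrow$(iii), which the paper instead reads off from the proof in \cite{SV26}.

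However, in your own cycle (i)$\Rightarrow$(iii) already follows from (i)$\Rightarrow$(vi)$\Rightarrow$(iii). Idempotent lifting cannot replace the hard directions (vi)$\Rightarrow$(i) and (vii)$\Rightarrow$(i). Vanishing of $\Ext^1_{\os}$ against stalks does not by itself show that $X$ is a direct summand of $\mathbb G(X)$ or $\mathbb F(X)$. That would need, for instance, $\Ext^1_{\os}(\mathbb C(X),X)=0$, and as noted you cannot reach $\mathbb C(X)$ from stalks by objectwise split d\'evissage. So the proof rests entirely on the isomorphism step, which needs the two ingredients above.
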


\begin{proof}
By \cite[Remark~4.8, Example~4.18(c)]{SV26}, the exact category
$\Mod{\Q,A}_{\os}$ is Frobenius. Thus its projective and
injective objects coincide.

Since every $A$-module is injective in the split exact structure,
it follows from \cite[Proposition~4.19]{SV26} that
$\textup{(i)}\Leftrightarrow\textup{(vi)}$, and its proof yields
\[
 X\cong\prod_{q\in\Q}G_qK_q(X)
\]
whenever (i) holds. Hence $\textup{(i)}\Rightarrow\textup{(iii)}\Rightarrow\textup{(ii)}$.
Conversely, the adjunction $(E_q,G_q)$ shows that $G_q(M)$
is injective in $\Mod{\Q,A}_{\os}$ for every $A$-module $M$.
Since products of injectives are injective, one has $\textup{(ii)}\Rightarrow\textup{(i)}$.

By \cite[Proposition~3.7 and Lemma~3.4]{HJ24}, one has
\[
 \prod_{q\in\Q}G_q(M_q)
 \cong\bigoplus_{q\in\Q}G_q(M_q)
 \cong\bigoplus_{q\in\Q}F_{\mathbb S^{-1}(q)}(M_q),
\]
so $\textup{(ii)}\Rightarrow\textup{(iv)}$. The adjunction $(F_q,E_q)$ shows
that each $F_q(N)$ is projective in $\Mod{\Q,A}_{\os}$.
Thus $\textup{(iv)}\Rightarrow\textup{(i)}$.
Moreover, by \cite[Lemma~7.28]{HJ22} we have natural isomorphisms
\begin{equation*}
 C_pF_q(M)\cong
 \begin{cases}
 M,&p=q,\\
 0,&p\ne q.
 \end{cases}
\end{equation*}
Since $C_p$ preserves direct sums, the decomposition in (iv)
implies $C_p(X)\cong N_p$, proving $\textup{(iv)}\Rightarrow\textup{(v)}$.
The converse is immediate.

\smallskip
\noindent

The implication $\textup{(i)}\Rightarrow\textup{(vii)}$ follows from projectivity.
Assume (vii). We first show that each canonical quotient
$\pi_q:X(q)\to C_q(X)$ splits. Apply
$\Hom_{\Q,A}(-,S_q(M))$ to the canonical objectwise split
exact sequence
\[
 0\longrightarrow\mathbb K(X)\xrightarrow{a}
 \mathbb F(X)\xrightarrow{\epsilon_X}X\longrightarrow0.
\]
By (vii) and the adjunction $(C_q,S_q)$, the morphism
\[
 \Hom_A(C_q\mathbb F(X),M)
 \longrightarrow\Hom_A(C_q\mathbb K(X),M)
\]
is surjective for every $M$. Taking $M=C_q\mathbb K(X)$
shows that $C_q(a)$ is a split monomorphism. Right exactness
of $C_q$ and \cite[Lemma~7.28]{HJ22} therefore give a split
exact sequence
\[
 0\longrightarrow C_q\mathbb K(X)\longrightarrow X(q)
 \xrightarrow{\pi_q}C_q(X)\longrightarrow0.
\]
Choose sections $s_q$ of $\pi_q$. By the adjunctions
$(F_q,E_q)$, they induce a morphism
\[
 \psi:Y:=\bigoplus_{q\in\Q}F_qC_q(X)\longrightarrow X
\]
such that $C_q(\psi)=\pi_qs_q=\mathrm{Id}_{C_q(X)}$
under the natural identification $C_q(Y)\cong C_q(X)$ supplied by \cite[Lemma~7.28]{HJ22}.
Since $C_{q}$ is right exact for any $q\in\Q$, we have that  $C_q(\Coker\psi)=\Coker C_{q}(\psi)=0$ for all $q$.
It follows from \cite[Proposition 7.19]{HJ22} that
 $\psi$ is an epimorphism.

It remains to show that $\Ker\psi=0$. Let $J$ be an injective
cogenerator of $\Mod{A}$.
Every extension of $X$ by $S_q(J)$ is objectwise split.
Consequently, (vii) and the adjunction $(C_q,S_q)$ give
\[
 0=\Ext^1_{\Mod{\Q,A}_{\os}}(X,S_q(J))
  =\Ext^1_{\Q,A}(X,S_q(J))
  \cong\Hom_A(L_1C_q(X),J).
\]
Here the last isomorphism follows by applying the adjunction
to a projective resolution of $X$ and using the exactness
of $\Hom_A(-,J)$. Since $J$ is a cogenerator in $\Mod{A}$, one has
$L_1C_q(X)=0$ for every $q$.
Writing $V=\Ker\psi$. Then we have the following sequence
\[
 0\longrightarrow C_q(V)\longrightarrow C_q(Y)
 \xrightarrow{C_q(\psi)}C_q(X)\longrightarrow0.
\]
As $C_q(\psi)$ is an isomorphism, $C_q(V)=0$ for every $q$,
and hence $V=0$ by \cite[Proposition 7.19]{HJ22}. Thus $\psi$ is an isomorphism, proving
$\textup{(vii)}\Rightarrow\textup{(v)}$.
\end{proof}

\begin{dfn}\label{dfn:contractible-Q-module}
An object $X\in\Mod{\Q,A}$ is called \emph{contractible} if it
satisfies the equivalent conditions of Proposition~\ref{prp:contractible}. We denote the full subcategory of contractible objects by $\operatorname{Ctr}_{\Q}(A)$.
\end{dfn}

\begin{rmk}\label{rmk:contractible-complexes}
Let $\Q$ be the $\Bbbk$-linear path category of the quiver
\[
 \cdots\longrightarrow q+1\longrightarrow q
 \longrightarrow q-1\longrightarrow\cdots
\]
modulo the relations that all paths of length two vanish.
Then $\Mod{\Q,A}$ identifies with the category of chain
complexes of left $A$-modules, and
\[
 K_q(X)\cong\Ker(d_q^X)
 \qquad\text{and}\qquad C_q(X)\cong\Coker(d_{q+1}^X).
\]
Moreover, $F_q(M)$ and $G_q(M)$ are shifts of disk complexes.
Therefore a complex $X$ is contractible if and only if it admits
a decomposition into disk complexes:
\[
 X\cong\bigoplus_{q\in\Z}F_qC_q(X)
 \cong\prod_{q\in\Z}G_qK_q(X).
\]
Thus Definition \ref{dfn:contractible-Q-module} recovers the usual notion of a
contractible complex.
\end{rmk}

\begin{cor}\label{cor:contractible-stable-zero}
For an object $X\in\Mod{\Q,A}$, the following conditions
are equivalent:
\begin{eqc}
\item $X$ is contractible;
\item $X\cong0$ in $\KQ$;
\item $\mathrm{Id}_X$ factors through a contractible object.
\end{eqc}
The class $\operatorname{Ctr}_{\Q}(A)$ of contractible
objects is closed under direct summands, and
\[
 \KQ\simeq\Mod{\Q,A}_{\os}/[\operatorname{Ctr}_{\Q}(A)],
\]
where $[\operatorname{Ctr}_{\Q}(A)]$ denotes the ideal
of morphisms factoring through contractible objects.
\end{cor}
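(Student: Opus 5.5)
The corollary is essentially the standard description of zero objects in the stable category of a Frobenius category. I would derive it from Proposition~\ref{prp:contractible}, which identifies $\operatorname{Ctr}_{\Q}(A)$ with the projective-injective objects of the Frobenius category $\Mod{\Q,A}_{\os}$. I will prove the cycle (i)$\Rightarrow$(iii)$\Rightarrow$(ii)$\Rightarrow$(i), then deduce closure under direct summands, and finally the description of $\KQ$.

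The implication (i)$\Rightarrow$(iii) is trivial, since $\mathrm{Id}_X$ factors through $X$ itself. For (iii)$\Rightarrow$(ii): by the definition of $\Hom_{\KQ}$ recalled in the preliminaries, $\mathrm{Id}_X$ becomes zero in $\Hom_{\KQ}(X,X)$, so $X\cong 0$ in $\KQ$. For (ii)$\Rightarrow$(iii): an object is zero in an additive category if and only if its identity morphism is zero. So $\mathrm{Id}_X\in[\operatorname{Ctr}_{\Q}(A)](X,X)$, which means $\mathrm{Id}_X=gf$ for some $f\colon X\to P$ and $g\colon P\to X$ with $P$ contractible.

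The only step with content is (iii)$\Rightarrow$(i). If $\mathrm{Id}_X=gf$ with $P$ contractible, then $f$ is a split monomorphism, so $X$ is a direct summand of $P$. I would then use criterion (vi) (or (vii)) of Proposition~\ref{prp:contractible}. The functor $\Ext^1_{\Mod{\Q,A}_{\os}}(S_q(M),-)$ is additive, so it is compatible with finite direct sums, and a summand of an object on which it vanishes also has it vanish. Hence $X$ satisfies (vi) and is contractible.

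The same argument shows that $\operatorname{Ctr}_{\Q}(A)$ is closed under direct summands: if $X\oplus X'$ is contractible, then $\mathrm{Id}_X$ factors through $X\oplus X'$ via the inclusion and the projection, so (iii) holds for $X$. Alternatively, one can note that in any exact category projective objects are closed under summands.

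Finally, the equivalence $\KQ\simeq\Mod{\Q,A}_{\os}/[\operatorname{Ctr}_{\Q}(A)]$ follows from the definitions. $\KQ$ is the stable category of the Frobenius category $\Mod{\Q,A}_{\os}$, that is, the quotient by the ideal of morphisms factoring through projective-injectives. By Definition~\ref{dfn:contractible-Q-module} and Proposition~\ref{prp:contractible}(i), these are exactly the contractible objects. No step is a real obstacle. The only point needing care is the summand argument, where invoking (vi) avoids having to decompose $X$ explicitly into $\bigoplus_q F_q(N_q)$.
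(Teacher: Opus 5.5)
Your proposal is correct and follows essentially the same route as the paper: (i)$\Rightarrow$(iii) trivially, (ii)$\Leftrightarrow$(iii) from the definition of $\Hom_{\KQ}$, and (iii)$\Rightarrow$(i) by exhibiting $X$ as a direct summand of a contractible object and using that the projective-injectives of $\Mod{\Q,A}_{\os}$ are closed under summands. Your alternative justification of summand-closure, via additivity of $\Ext^1_{\Mod{\Q,A}_{\os}}(S_q(M),-)$ and criterion (vi), is equally valid but not needed.
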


\begin{proof}
By Proposition~\ref{prp:contractible}, the contractible
objects are precisely the projective-injective objects of
$\Mod{\Q,A}_{\os}$. In particular, they are closed
under direct summands.
Since $\KQ$ is the stable category of this Frobenius
category, the quotient description follows.
Moreover, $X\cong0$ in $\KQ$ if and only if
$\mathrm{Id}_X$ belongs to $[\operatorname{Ctr}_{\Q}(A)]$.
This proves $\textup{(ii)}\Leftrightarrow\textup{(iii)}$.

The implication $\textup{(i)}\Rightarrow\textup{(iii)}$ follows by factoring
$\mathrm{Id}_X$ through $X$ itself.
Conversely, assume that there are morphisms
$u:X\to C$ and $v:C\to X$, with $C$ contractible,
such that $vu=\mathrm{Id}_X$.
Then $v$ is a split epimorphism with section $u$, and hence
$C\cong X\oplus\Ker v$ in $\Mod{\Q,A}$.
Since contractible objects are closed under direct summands,
$X$ is contractible. This proves $\textup{(iii)}\Rightarrow\textup{(i)}$.
\end{proof}

Recall that
\[
 \I=
 \{X\in\Mod{\Q,A}\mid X(q)\in{}_A\Inj
   \text{ for every }q\in\Q\}.
\]
Since $E_q$ is right adjoint to the exact functor $F_q$,
it preserves injective objects. Hence
${}_{\Q,A}\Inj\subseteq\I$.

\begin{lem}\label{lem:canonical-I}
For every $X\in\I$, one has
$\mathbb F(X),\mathbb G(X)\in{}_{\Q,A}\Inj$ and
$\mathbb K(X),\mathbb C(X)\in\I$.
\end{lem}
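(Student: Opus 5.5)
We need to show: for $X\in\I$, $\mathbb F(X)$ and $\mathbb G(X)$ are injective in $\Mod{\Q,A}$, and $\mathbb K(X)$ and $\mathbb C(X)$ lie in $\I$. The plan is to handle $\mathbb G(X)$ first, transfer the result to $\mathbb F(X)$ via the Serre functor, and then read off the statements about $\mathbb K(X)$ and $\mathbb C(X)$ from the two canonical sequences.

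\emph{The object $\mathbb G(X)$.} Each $G_q$ is right adjoint to the exact functor $E_q$, so it preserves injective objects. Since $X(q)\in{}_A\Inj$, every $G_q(X(q))$ lies in ${}_{\Q,A}\Inj$. A product of injectives in the Grothendieck category $\Mod{\Q,A}$ is injective, so $\mathbb G(X)=\prod_q G_q(X(q))$ is injective. Alternatively, Lemma~\ref{lem:ext-adjunction} gives
\[
\Ext^1_{\Q,A}(Y,G_q(M))\cong\Ext^1_A(Y(q),M)=0
\]
for injective $M$, which proves the same thing directly.

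\emph{The object $\mathbb F(X)$.} This is the step needing the most care, because a coproduct of injectives need not be injective without a noetherian hypothesis. I will avoid that issue using the isomorphisms from \cite[Proposition~3.7 and Lemma~3.4]{HJ24} already quoted in the proof of Proposition~\ref{prp:contractible}. They give $F_q(M)\cong G_{\mathbb S(q)}(M)$ (with the appropriate indexing of $\mathbb S$), and they identify a coproduct of such objects with the corresponding product. Concretely,
\[
\mathbb F(X)=\bigoplus_q F_q(X(q))\cong\bigoplus_q G_{\mathbb S(q)}(X(q))\cong\prod_q G_{\mathbb S(q)}(X(q)).
\]
The coproduct-to-product identification holds by local boundedness: evaluated at any $p$, only finitely many summands are nonzero. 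The right-hand side is a product of injectives by the previous step, so it is injective. The main risk is getting the Serre-functor indexing right, but the family $(X(q))_q$ is reindexed by a bijection on objects, so the argument is unaffected.

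\emph{The objects $\mathbb K(X)$ and $\mathbb C(X)$.} Since ${}_{\Q,A}\Inj\subseteq\I$, both $\mathbb F(X)$ and $\mathbb G(X)$ are objectwise injective. Evaluating the sequence $0\to X\to\mathbb G(X)\to\mathbb C(X)\to0$ at $q$ gives a split exact sequence of $A$-modules. Hence $\mathbb C(X)(q)$ is a direct summand of the injective module $\mathbb G(X)(q)$, and so it is injective. Likewise, $\mathbb K(X)(q)$ is a direct summand of $\mathbb F(X)(q)$, which is injective. Therefore $\mathbb K(X),\mathbb C(X)\in\I$.
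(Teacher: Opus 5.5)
Your proposal is correct and follows the paper's proof essentially step for step. You show that $\mathbb G(X)$ is injective because each $G_q$ preserves injectives and products of injectives are injective, and that $\mathbb F(X)$ is injective via the identification $\bigoplus_q F_q(X(q))\cong\prod_q G_{\mathbb S(q)}(X(q))$. You then get $\mathbb K(X)(p)$ and $\mathbb C(X)(p)$ as direct summands of the injective modules $\mathbb F(X)(p)$ and $\mathbb G(X)(p)$ in the objectwise split canonical sequences, exactly as the paper does, with your local-boundedness justification of the coproduct-to-product isomorphism being a correct self-contained supplement to the citation the paper relies on.
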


\begin{proof}
Since the evaluation functor $E_q$ is exact, its right
adjoint $G_q$ preserves injective objects.
For $X\in\I$, each $X(q)$ is an injective $A$-module.
Thus $\mathbb G(X)=\prod_{q\in\Q}G_q(X(q))$
is injective in $\Mod{\Q,A}$.
By \cite[Lemma~3.4 and Proposition~3.7]{HJ24},
\[
 \mathbb F(X)
 =\bigoplus_{q\in\Q}F_q(X(q))
 \cong\bigoplus_{q\in\Q}G_{\mathbb S(q)}(X(q))
 \cong\prod_{q\in\Q}G_{\mathbb S(q)}(X(q)).
\]
Thus
$\mathbb F(X)$ is also injective in $\Mod{\Q,A}$.

For every $p\in\Q$, the functor $E_p$ preserves
injective objects because its left adjoint $F_p$ is exact.
Consequently, $\mathbb F(X)(p)$ and $\mathbb G(X)(p)$
are injective $A$-modules.
The canonical objectwise split exact sequences
\[
 0\longrightarrow\mathbb K(X)
 \longrightarrow\mathbb F(X)
 \longrightarrow X\longrightarrow0
\]
and
\[
 0\longrightarrow X
 \longrightarrow\mathbb G(X)
 \longrightarrow\mathbb C(X)\longrightarrow0
\]
show that $\mathbb K(X)(p)$ and $\mathbb C(X)(p)$
are direct summands of $\mathbb F(X)(p)$ and
$\mathbb G(X)(p)$, respectively.
They are therefore injective, proving that
$\mathbb K(X),\mathbb C(X)\in\I$.
\end{proof}

\begin{prp}\label{prp:I-Frobenius}
The subcategory $\I$ is
extension-closed in $\Mod{\Q,A}$, and the exact structures on $\I$ induced by the abelian and objectwise split exact structures on $\Mod{\Q,A}$
coincide. With this
exact structure, $\I$ is Frobenius and
\[
 \Prj(\I)=\Inj(\I)={}_{\Q,A}\Inj.
\]
\end{prp}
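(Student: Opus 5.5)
Extension-closure of $\I$ will be checked objectwise. For a short exact sequence $0\to X'\to X\to X''\to0$ in $\Mod{\Q,A}$ with $X',X''\in\I$, evaluation is exact, so each $0\to X'(q)\to X(q)\to X''(q)\to0$ is exact in $\Mod{A}$. Since $X'(q)$ is injective, this sequence splits, and $X(q)\cong X'(q)\oplus X''(q)$ is injective. Hence $X\in\I$.

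The same observation shows that the two induced exact structures agree. Every short exact sequence in $\Mod{\Q,A}$ whose terms lie in $\I$ is automatically objectwise split, because its left term is objectwise injective. Conversely, objectwise split sequences are in particular short exact. So $\I$, being extension-closed in both $\Mod{\Q,A}$ and $\Mod{\Q,A}_{\os}$, carries a single inherited exact structure whose conflations are exactly the short exact sequences with all three terms in $\I$.

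For the Frobenius property, first note that ${}_{\Q,A}\Inj\subseteq\I$, as observed before Lemma~\ref{lem:canonical-I}. Objects of ${}_{\Q,A}\Inj$ are injective in the abelian structure, so they are injective in $\I$. They are also projective in $\I$. To see this, I would show that every injective $I$ in $\Mod{\Q,A}$ is contractible. Lemma~\ref{lem:canonical-I} gives an objectwise split monomorphism $I\to\mathbb G(I)$, and this is a monomorphism in $\Mod{\Q,A}$ from an injective object, so it splits. Thus $I$ is a direct summand of $\mathbb G(I)=\prod_q G_q(I(q))$, which is contractible by Proposition~\ref{prp:contractible}(ii). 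By Corollary~\ref{cor:contractible-stable-zero}, $I$ is contractible, hence projective in $\Mod{\Q,A}_{\os}$, and so projective in $\I$ because conflations in $\I$ are objectwise split.

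Enough injectives and enough projectives come from the canonical sequences in Lemma~\ref{lem:canonical-I}. For $X\in\I$, $0\to X\to\mathbb G(X)\to\mathbb C(X)\to0$ and $0\to\mathbb K(X)\to\mathbb F(X)\to X\to0$ are conflations in $\I$, with $\mathbb G(X),\mathbb F(X)\in{}_{\Q,A}\Inj$.

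Finally, I would show that $\Prj(\I)$ and $\Inj(\I)$ are both exactly ${}_{\Q,A}\Inj$. If $X\in\I$ is injective in $\I$, the conflation $X\to\mathbb G(X)$ splits, so $X$ is a summand of an injective. If $X$ is projective in $\I$, the deflation $\mathbb F(X)\to X$ splits, so $X$ is a summand of $\mathbb F(X)$, which is injective. In either case $X\in{}_{\Q,A}\Inj$.

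The only step that needs care is the projectivity of injective $\Q$-shaped modules within $\I$. Reducing it to contractibility via the splitting of $I\to\mathbb G(I)$ avoids having to invoke the Serre-functor identification $F_q\cong G_{\mathbb S(q)}$ directly.
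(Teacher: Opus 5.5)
Your proof is correct and follows essentially the same route as the paper. The steps match: objectwise splitting gives extension-closure and the coincidence of the two exact structures, the canonical sequences of Lemma~\ref{lem:canonical-I} give enough projectives and injectives, and splitting $J\to\mathbb G(J)$ shows that objects of ${}_{\Q,A}\Inj$ are projective-injective in $\I$. One small caveat: your closing remark overstates what you avoid, because the injectivity of $\mathbb F(X)$ comes from Lemma~\ref{lem:canonical-I}, which itself uses the Serre-functor identification, and you need that injectivity to place $\Prj(\I)$ inside ${}_{\Q,A}\Inj$.
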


\begin{proof}
Let $0\to X\to Y\to Z\to0$ be a short exact sequence
in $\Mod{\Q,A}$ with $X,Z\in\I$.
For every $q\in\Q$, the evaluated sequence splits
because $X(q)$ is injective. Hence
$Y(q)\cong X(q)\oplus Z(q)$ is injective.
This proves that $\I$ is extension-closed and that
the two induced exact structures coincide.

For each $X\in\I$, by Lemma~\ref{lem:canonical-I}
there are conflations
\[
 0\longrightarrow\mathbb K(X)
 \longrightarrow\mathbb F(X)
 \longrightarrow X\longrightarrow0
\]
and
\[
 0\longrightarrow X
 \longrightarrow\mathbb G(X)
 \longrightarrow\mathbb C(X)\longrightarrow0
\]
in $\I$ with $\mathbb F(X),\mathbb G(X)\in{}_{\Q,A}\Inj$.
By Proposition~\ref{prp:contractible}, $\mathbb F(X)$ and $\mathbb G(X)$
are projective-injective in $\Mod{\Q,A}_{\os}$.
Thus $\I$ has enough projectives and enough injectives.

If $P$ is projective in $\I$, the deflation
$\mathbb F(P)\to P$ splits, so $P$ is a direct summand
of $\mathbb F(P)$.
Similarly, every injective object $J$ of $\I$ is a
direct summand of $\mathbb G(J)$.
Hence every projective or injective object of $\I$ belongs
to ${}_{\Q,A}\Inj$ by Lemma \ref{lem:canonical-I}.

Conversely, let $J\in{}_{\Q,A}\Inj$.
Then $J\in\I$.
The canonical monomorphism $J\to\mathbb G(J)$ splits
in $\Mod{\Q,A}$ by injectivity of $J$.
Consequently, $J$ is a direct summand of the projective-injective
object $\mathbb G(J)$ and is therefore projective-injective
in $\I$.
This proves the asserted equality and shows that $\I$
is Frobenius.
\end{proof}

\begin{lem}\label{lem:factor-ideal}
Let $X$ and $Y$ be in $\I$. Then a morphism $X\to Y$ factors through a contractible object of $\Mod{\Q,A}$ if and only if it factors through an object of ${}_{\Q,A}\Inj$.
\end{lem}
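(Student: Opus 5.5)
Both directions reduce to the facts that every object of ${}_{\Q,A}\Inj$ is contractible and that for $X\in\I$ the canonical conflation $0\to X\xrightarrow{\eta_X}\mathbb G(X)\to\mathbb C(X)\to0$ has $\mathbb G(X)\in{}_{\Q,A}\Inj$.

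For the implication ``$\Leftarrow$'', it suffices to show ${}_{\Q,A}\Inj\subseteq\operatorname{Ctr}_{\Q}(A)$. This was in effect shown in the proof of Proposition~\ref{prp:I-Frobenius}. If $J\in{}_{\Q,A}\Inj$, then $\eta_J:J\to\mathbb G(J)$ is a monomorphism in $\Mod{\Q,A}$, so it splits by injectivity of $J$. Hence $J$ is a direct summand of $\mathbb G(J)=\prod_q G_q(J(q))$, which is contractible by Proposition~\ref{prp:contractible}(ii). Since $\operatorname{Ctr}_{\Q}(A)$ is closed under direct summands (Corollary~\ref{cor:contractible-stable-zero}), $J$ is contractible. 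Thus any morphism factoring through an object of ${}_{\Q,A}\Inj$ factors through a contractible object.

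For ``$\Rightarrow$'', let $f:X\to Y$ with $X,Y\in\I$ factor as $f=vu$ with $u:X\to C$, $v:C\to Y$ and $C$ contractible. I will replace $C$ by $\mathbb G(X)$, which lies in ${}_{\Q,A}\Inj$ by Lemma~\ref{lem:canonical-I} since $X\in\I$. The sequence $0\to X\xrightarrow{\eta_X}\mathbb G(X)\to\mathbb C(X)\to0$ is objectwise split, hence a conflation in $\Mod{\Q,A}_{\os}$. By Proposition~\ref{prp:contractible}, $C$ is injective in the Frobenius category $\Mod{\Q,A}_{\os}$, so $u$ extends along the inflation $\eta_X$: there is $u':\mathbb G(X)\to C$ with $u'\eta_X=u$. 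Then $f=(vu')\eta_X$ factors through $\mathbb G(X)\in{}_{\Q,A}\Inj$.

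The only point needing care is the extension step. It uses injectivity of $C$ relative to the objectwise split structure, not the abelian one; this is legitimate because $\eta_X$ is objectwise split. Note that $Y\in\I$ is not even needed in this argument. Alternatively one could factor through $\mathbb F(Y)\to Y$ using projectivity of $C$.
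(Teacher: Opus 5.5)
Your proposal is correct and follows essentially the same argument as the paper. For one direction you show that objects of ${}_{\Q,A}\Inj$ are direct summands of $\mathbb G(-)$ and hence contractible; for the other you extend $u$ along the objectwise split inflation $\eta_X:X\to\mathbb G(X)$ using injectivity of $C$ in $\Mod{\Q,A}_{\os}$, with $\mathbb G(X)\in{}_{\Q,A}\Inj$ by Lemma~\ref{lem:canonical-I}.
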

\begin{proof}
Since every object of ${}_{\Q,A}\Inj$ is contractible by the proof
of Proposition~\ref{prp:I-Frobenius},  a factorization through
an object of ${}_{\Q,A}\Inj$ is a factorization through a contractible object.
Conversely, suppose that $u:X\to Y$ admits a factorization
$u=ba$ with $a:X\to C$ and $b:C\to Y$ where $C$ is contractible.
Note that there is an inflation $$0\to X\to \mathbb{G}(X)\to \mathbb{C}(X)\to 0$$ in $\Mod{\Q,A}_{\os}$.
Since $C$ is an injective object in $\Mod{\Q,A}_{\os}$ by Proposition \ref{prp:contractible}, there is a morphism
$\widetilde a:\mathbb G(X)\to C$ with
$\widetilde a\eta_X=a$. Therefore $
 u=(b\widetilde a)\eta_X.$
This is the required factorization, since $\mathbb G(X)$ is
injective in $\Mod{\Q,A}$ by Lemma~\ref{lem:canonical-I}.
\end{proof}

\begin{prp}\label{prp:stable-I}
The subcategory $\mathcal K_{\Q}({}_A\Inj)$ of
$\mathcal K_{\Q}(A)$ is a triangulated category, and
the inclusion $\I\hookrightarrow\Mod{\Q,A}_{\os}$ induces
a triangle equivalence
\[
 \underline{\I}\xrightarrow{\sim}\mathcal K_{\Q}({}_A\Inj).
\]
\end{prp}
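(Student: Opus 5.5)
We will show that the functor $\underline{\I}\to\KQ$ induced by the inclusion $\I\hookrightarrow\Mod{\Q,A}_{\os}$ is fully faithful and exact, and that its essential image is $\KI$. Triangulatedness of $\KI$ then follows by transport of structure.

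First, the inclusion is an exact functor of Frobenius categories. By Proposition~\ref{prp:I-Frobenius}, conflations in $\I$ are exactly the objectwise split short exact sequences with all terms in $\I$. By Proposition~\ref{prp:contractible} together with Proposition~\ref{prp:I-Frobenius}, the inclusion sends the projective-injectives ${}_{\Q,A}\Inj$ of $\I$ to contractible objects. Hence it induces an additive functor $\underline{\I}\to\KQ$.

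Second, we check full faithfulness. Fullness is immediate because both categories have the same morphisms modulo some ideal. Faithfulness amounts to showing that the two ideals coincide on $\Hom_{\Q,A}(X,Y)$ for $X,Y\in\I$. This is precisely Lemma~\ref{lem:factor-ideal}: a morphism factors through a contractible object if and only if it factors through ${}_{\Q,A}\Inj$. By definition, the essential image is the full subcategory $\KI$ of objects isomorphic in $\KQ$ to objects of $\I$.

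Third, we verify exactness, i.e.\ that the functor commutes with suspension and preserves triangles. In $\underline{\I}$, the suspension of $X$ is computed from an inflation into a projective-injective. We may take the canonical conflation $0\to X\xrightarrow{\eta_X}\mathbb G(X)\to\mathbb C(X)\to0$, which by Lemma~\ref{lem:canonical-I} lies in $\I$ with $\mathbb G(X)\in{}_{\Q,A}\Inj$. The same sequence computes $\Sigma X\cong\mathbb C(X)$ in $\KQ$, so the suspensions agree, with $\mathbb C(X)\in\I$. Standard triangles in $\underline{\I}$ arise from conflations in $\I$ via pushout along $\eta_X$. These are objectwise split sequences, so the resulting triangles are standard triangles in $\KQ$, built from the same data. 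This is Happel's general fact that an exact functor of Frobenius categories preserving projective-injectives induces a triangle functor.

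Finally, $\KI$ is closed under $\Sigma^{\pm1}$, since $\mathbb C(X),\mathbb K(X)\in\I$. It is also closed under cones: given $f\colon X\to Y$ in $\I$, its cone in $\KQ$ is the pushout of $X\to\mathbb G(X)$ along $f$. That pushout is an extension of $\mathbb C(X)$ by $Y$, hence lies in $\I$ by extension-closure (Proposition~\ref{prp:I-Frobenius}). So $\KI$ is a triangulated subcategory, and the functor is a triangle equivalence onto it. The main obstacle is faithfulness, and it is already handled by Lemma~\ref{lem:factor-ideal}. The remaining care is only in matching the cone constructions in the two categories.
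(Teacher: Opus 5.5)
Your proposal is correct and follows the same route as the paper. The inclusion is exact and preserves projective-injectives (Proposition~\ref{prp:I-Frobenius}), so it induces a triangle functor. That functor is fully faithful by Lemma~\ref{lem:factor-ideal}, and its essential image is $\KI$. Your direct checks that $\KI$ is closed under shifts and cones are consistent with this but not needed once the triangle equivalence is in hand.
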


\begin{proof}
By Proposition~\ref{prp:I-Frobenius}, the inclusion is exact
and preserves projective-injective objects, hence induces
a triangle functor $\iota:\underline{\I}\longrightarrow\mathcal K_{\Q}(A).$
By Lemma \ref{lem:factor-ideal}, the induced functor $\iota$ is fully faithful, and its essential image is exactly $\mathcal K_{\Q}({}_A\Inj)$.
Thus $\mathcal K_{\Q}({}_A\Inj)$ is triangulated, and the asserted equivalence follows.
\end{proof}

\begin{lem}\label{lem:Ext-comparison}
For $X,Y\in\I$, there are natural isomorphisms
\[
 \Ext^1_{\Q,A}(X,Y)
 \cong\Ext^1_{\I}(X,Y)
 \cong\Hom_{\KI}(X,\Sigma Y).
\]
\end{lem}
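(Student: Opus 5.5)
The plan is to establish the two isomorphisms separately, using the Yoneda description of $\Ext^1$ for the first and the standard Frobenius-category argument for the second.

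\emph{First isomorphism.} In any exact category, $\Ext^1$ is the group of equivalence classes of conflations under Baer sum. By Proposition~\ref{prp:I-Frobenius}, $\I$ is extension-closed in $\Mod{\Q,A}$, and its exact structure is the one inherited from the abelian structure. Hence every short exact sequence $0\to Y\to E\to X\to 0$ in $\Mod{\Q,A}$ with $X,Y\in\I$ has $E\in\I$, and is therefore a conflation in $\I$. Conversely, every conflation in $\I$ is such a sequence. Equivalences of extensions and Baer sums are computed by pullbacks and pushouts that stay inside $\I$ by extension-closure. So the identity map on extension classes gives a natural isomorphism $\Ext^1_{\Q,A}(X,Y)\cong\Ext^1_{\I}(X,Y)$.

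\emph{Second isomorphism.} Here I would use the conflation $0\to Y\xrightarrow{\eta_Y}\mathbb G(Y)\to\mathbb C(Y)\to 0$ in $\I$ from Lemma~\ref{lem:canonical-I}. Its middle term is projective-injective in $\I$, and $\mathbb C(Y)$ represents $\Sigma Y$ in $\underline{\I}\simeq\KI$ by Proposition~\ref{prp:stable-I} and the description of $\Sigma$ in $\KQ$. Applying $\Hom_{\I}(X,-)$ gives the exact sequence
\[
 \Hom_{\I}(X,\mathbb G(Y))\to\Hom_{\I}(X,\mathbb C(Y))\xrightarrow{\partial}\Ext^1_{\I}(X,Y)\to\Ext^1_{\I}(X,\mathbb G(Y))=0.
\]
The last term vanishes because $\mathbb G(Y)$ is injective in $\I$. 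Thus $\partial$ is surjective, and its kernel consists of the maps $X\to\mathbb C(Y)$ that factor through $\mathbb G(Y)$.

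The step I expect to need the most care is checking that this kernel is exactly the ideal of maps factoring through some object of ${}_{\Q,A}\Inj$. Suppose $X\to\mathbb C(Y)$ factors as $X\to J\to\mathbb C(Y)$ with $J$ projective in $\I$. Since $\mathbb G(Y)\to\mathbb C(Y)$ is a deflation and $J$ is projective, the map $J\to\mathbb C(Y)$ lifts to $\mathbb G(Y)$, so the composite factors through $\mathbb G(Y)$. The converse inclusion is immediate. By Lemma~\ref{lem:factor-ideal}, this ideal coincides with the one used to define $\Hom_{\KI}$.

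Therefore $\partial$ induces an isomorphism $\Hom_{\KI}(X,\Sigma Y)\cong\Ext^1_{\I}(X,Y)$. Naturality in $X$ is clear. Naturality in $Y$ follows by comparing with another choice of injective conflation, using that $\Sigma$ is well defined up to canonical isomorphism in the stable category.
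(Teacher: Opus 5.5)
Your proposal is correct and follows the paper's proof essentially step for step: extension-closure of $\I$ gives the first isomorphism, and the second comes from the conflation $0\to Y\to\mathbb G(Y)\to\mathbb C(Y)\to0$, using that maps from projective-injectives lift along the deflation $\mathbb G(Y)\to\mathbb C(Y)$. Your explicit appeal to Lemma~\ref{lem:factor-ideal}, to match the factorization ideal with the one defining $\Hom_{\KI}$, is a point the paper leaves implicit in the identification $\underline{\I}\simeq\KI$.
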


\begin{proof}
The first isomorphism follows because $\I$ is a full extension-closed subcate-
gory of $\Mod{\Q,A}$ with the inherited exact structure; see Proposition~\ref{prp:I-Frobenius}.

Consider the conflation $0\to Y\to \mathbb{G}(Y)\xrightarrow{p} \mathbb{C}(Y)\to0$ in $\I$.
Since $\mathbb{G}(Y)$ is projective-injective in $\I$, applying $\Hom_{\I}(X,-)$ gives an exact sequence
\[
 \Hom_{\I}(X,\mathbb{G}(Y))\xrightarrow{p_*}\Hom_{\I}(X,\mathbb{C}(Y))
 \longrightarrow\Ext^1_{\I}(X,Y)\longrightarrow0.
\]
It is straightforward to check that a morphism $X\to \mathbb{C} (Y)$ factors through a projective-injective
object if and only if it factors through $p$ as every
morphism from a projective object to $\mathbb{C}(Y)$ lifts along $p$.
We therefore obtain natural isomorphisms
\[
 \Hom_{\underline{\I}}(X,\Sigma Y)\cong\Hom_{\underline{\I}}(X,\mathbb{C}(Y))\cong\Ext^1_{\I}(X,Y),
\]
as desired.
\end{proof}

Let $j_{\mathrm{inj}}:\KI\hookrightarrow\KQ$ be the inclusion,
and set $Q_I=I_*j_{\mathrm{inj}}:\KI\to\DQ$.
The right adjoint $f$ of $I_*$ is induced by fibrant replacement
in the injective model structure $(\Mod{\Q,A},\EE,\EE^{\perp_1})$.
Since its fibrant objects are objectwise injective by
\cite[Proposition~3.2]{HJ24}, $f$ factors through $j_{\mathrm{inj}}$.
Thus we obtain a functor $R_I:\DQ\to\KI$ with
$j_{\mathrm{inj}}R_I=f$, which is right adjoint to $Q_I$
as shown below.

\begin{prp}\label{prp:right-adjoint}
There is an adjunction
\[
 \xymatrix@C=4pc{
 \KI\ar@<0.7ex>[r]^{Q_I}&
 \DQ\ar@<0.7ex>[l]^{R_I},}
\]
where the upper arrow is the left adjoint, and the functor $R_I$ is fully
faithful.
\end{prp}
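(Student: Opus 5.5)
The plan is to restrict the known adjunction $(I_*,f)$ from \cite[Theorem~6.11]{SV26} along the fully faithful inclusion $j_{\mathrm{inj}}:\KI\hookrightarrow\KQ$.

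For $X\in\KI$ and $D\in\DQ$ we have natural isomorphisms
\[
 \Hom_{\DQ}(Q_IX,D)=\Hom_{\DQ}(I_*j_{\mathrm{inj}}X,D)
 \cong\Hom_{\KQ}(j_{\mathrm{inj}}X,fD)
 =\Hom_{\KQ}(j_{\mathrm{inj}}X,j_{\mathrm{inj}}R_ID).
\]
By Proposition~\ref{prp:stable-I}, $\KI$ is a full subcategory of $\KQ$, so $j_{\mathrm{inj}}$ is fully faithful. Hence the last group is $\Hom_{\KI}(X,R_ID)$, naturally in both variables. This gives the adjunction $Q_I\dashv R_I$.

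Before this, one point needs care: $R_I$ should be a well-defined triangle functor. Fibrant replacement lands in $\EE^{\perp_1}$, whose objects are objectwise injective by \cite[Proposition~3.2]{HJ24}. So $fD$ lies in the essential image of $j_{\mathrm{inj}}$, and since $j_{\mathrm{inj}}$ is fully faithful, $f$ factors uniquely up to isomorphism through it. Exactness of $R_I$ follows because $\KI$ is a triangulated subcategory and $f$ is exact; in any case, adjoints of exact functors are exact.

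For full faithfulness I would use the counit. The counit of $Q_I\dashv R_I$ at $D$ is $Q_IR_ID=I_*fD\to D$, which, via the identifications above, is the counit of $I_*\dashv f$. That counit is an isomorphism because $f$ is fully faithful. A right adjoint is fully faithful exactly when its counit is invertible, so $R_I$ is fully faithful. Equivalently, one can compute directly:
\[
 \Hom_{\KI}(R_ID,R_ID')\cong\Hom_{\KQ}(fD,fD')\cong\Hom_{\DQ}(D,D').
\]

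None of these steps is a serious obstacle. The only care needed is the bookkeeping identification of the counit, which follows from naturality of the displayed isomorphism applied to $X=R_ID$.
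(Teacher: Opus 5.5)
Your proposal is correct and follows essentially the same route as the paper's proof. You restrict the adjunction $(I_*,f)$ along the fully faithful inclusion $j_{\mathrm{inj}}$, using that fibrant objects are objectwise injective so that $f=j_{\mathrm{inj}}R_I$. You then deduce full faithfulness of $R_I$ from the fact that its counit is identified with the invertible counit of $(I_*,f)$.
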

\begin{proof}
The adjunction $(I_*,f)$ and the full faithfulness of
$j_{\mathrm{inj}}$ give natural isomorphisms
\[
\begin{aligned}
 \Hom_{\DQ}(Q_IX,Y)
 &=\Hom_{\DQ}(I_*j_{\mathrm{inj}}X,Y)\\
 &\cong\Hom_{\KQ}(j_{\mathrm{inj}}X,fY)\\
 &=\Hom_{\KQ}(j_{\mathrm{inj}}X,j_{\mathrm{inj}}R_IY)\\
 &\cong\Hom_{\KI}(X,R_IY)
\end{aligned}
\]
for $X\in\KI$ and $Y\in\DQ$, where we use
$Q_I=I_*j_{\mathrm{inj}}$ and $f=j_{\mathrm{inj}}R_I$.
Thus $(Q_I,R_I)$ is an adjoint pair.
Its counit at $Y$ identifies with the counit
\[
 Q_IR_IY=I_*fY\longrightarrow Y
\]
of $(I_*,f)$.
Since $f$ is fully faithful, this morphism is an
isomorphism. Hence $R_I$ is fully faithful.
\end{proof}

\begin{dfn}\label{dfn:acyclic-I}
Set $\EI=\EE\cap\I$.
The full subcategory of $\KI$ consisting of the objects
of $\EI$ is denoted by $\KAI$.
Thus its objects are the exact objectwise injective
$\Q$-shaped modules, and
\[
 \KAI=\Ker Q_I.
\]
\end{dfn}
Since $Q_I$ is a triangle functor, $\KAI$ is a thick
subcategory of $\KI$.
We next identify $\KAI$ with the stable category of $\EI$.

\begin{prp}\label{prp:EI-Frobenius}
With the exact structure inherited from $\I$, the category
$\EI$ is Frobenius, and
\[
 \Prj(\EI)=\Inj(\EI)={}_{\Q,A}\Inj.
\]
The inclusion $\EI\hookrightarrow\Mod{\Q,A}_{\os}$
induces a triangle equivalence
\[
 \underline{\EI}\xrightarrow{\ \sim\ }\KAI.
\]
\end{prp}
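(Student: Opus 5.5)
We follow the pattern of Propositions~\ref{prp:I-Frobenius} and \ref{prp:stable-I}, with $\I$ replaced by $\EI$. The first step is to show that $\EI$ is extension-closed in $\I$, and hence in $\Mod{\Q,A}$. Both $\EE$ (which is thick by \cite[Theorems~4.4 and~6.1]{HJ22}) and $\I$ are extension-closed, so their intersection is too. Therefore $\EI$ inherits an exact structure whose conflations are the short exact sequences in $\Mod{\Q,A}$ with all three terms in $\EI$. By Proposition~\ref{prp:I-Frobenius}, these sequences are automatically objectwise split.

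The second step is to check that ${}_{\Q,A}\Inj\subseteq\EI$ and that the canonical conflations stay inside $\EI$. By Proposition~\ref{prp:contractible}, every injective $J$ in $\Mod{\Q,A}$ is a summand of $\mathbb G(J)\cong\bigoplus_q F_{\mathbb S^{-1}(q)}(J(q))$. Each $F_p(M)$ is exact: its underlying $\Q$-module is $\Q(p,-)\otimes_\kk M$, and I expect it to have finite projective dimension over $\Q$ because $\Q(p,-)$ is $\kk$-projective and $\kk$ is hereditary. Alternatively, one can use that projectives/injectives of $\Mod{\Q,A}$ lie in $\EE$, which follows from the Hovey triples, since $\EE\cap{}^{\perp_1}\EE$ contains all projectives and $\EE\cap\EE^{\perp_1}$ contains all injectives. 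The latter argument is cleaner, so ${}_{\Q,A}\Inj\subseteq\EE\cap\I=\EI$. Now take $X\in\EI$. By Lemma~\ref{lem:canonical-I}, $\mathbb F(X),\mathbb G(X)\in{}_{\Q,A}\Inj\subseteq\EI$. Since $\EE$ is thick and contains $X$, $\mathbb F(X)$ and $\mathbb G(X)$, the 2-out-of-3 property gives $\mathbb K(X),\mathbb C(X)\in\EE$, and Lemma~\ref{lem:canonical-I} places them in $\I$. So the canonical sequences are conflations in $\EI$ with middle terms in ${}_{\Q,A}\Inj$. These middle terms are projective-injective in $\I$, hence also in the smaller exact category $\EI$, because $\Ext^1_{\EI}=\Ext^1_{\I}$ on $\EI$ by extension-closure. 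Thus $\EI$ has enough projectives and injectives. Any projective or injective object of $\EI$ splits off from $\mathbb F(-)$ or $\mathbb G(-)$, so it lies in ${}_{\Q,A}\Inj$. This proves that $\EI$ is Frobenius with $\Prj(\EI)=\Inj(\EI)={}_{\Q,A}\Inj$.

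The third step is the triangle equivalence. The inclusion $\EI\hookrightarrow\I$ is exact and preserves projective-injectives, so it induces a triangle functor $\underline{\EI}\to\underline{\I}$. Both stable categories are formed by quotienting out morphisms that factor through ${}_{\Q,A}\Inj$, so this functor is fully faithful. Composing with the equivalence of Proposition~\ref{prp:stable-I}, which is compatible with the inclusion into $\Mod{\Q,A}_{\os}$ via Lemma~\ref{lem:factor-ideal}, gives a fully faithful triangle functor $\underline{\EI}\to\KQ$. Its essential image consists of the objects isomorphic in $\KQ$ to objects of $\EI$. This is exactly $\KAI$ as in Definition~\ref{dfn:acyclic-I}, because $\KAI$ is closed under isomorphism: it is the kernel of $Q_I$.

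The main obstacle is the claim ${}_{\Q,A}\Inj\subseteq\EE$. I will justify it by the Hovey triple argument: in $(\Mod{\Q,A},\EE,\EE^{\perp_1})$ the trivially fibrant objects $\EE\cap\EE^{\perp_1}$ are the injectives, by the general theory of hereditary abelian model structures.
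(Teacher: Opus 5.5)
Your proposal is correct and follows essentially the paper's argument. Thickness of $\EE$ and Lemma~\ref{lem:canonical-I} keep the canonical conflations inside $\EI$, and summands of their middle terms give $\Prj(\EI)=\Inj(\EI)={}_{\Q,A}\Inj$. Lemma~\ref{lem:factor-ideal} then gives the fully faithful functor with essential image $\KAI$. The differences are minor: you derive ${}_{\Q,A}\Inj\subseteq\EE$ from the injective Hovey triple rather than citing \cite[Theorem~4.4]{HJ22}, and you route the equivalence through $\underline{\I}$ instead of going directly to $\KQ$.
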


\begin{proof}
By \cite[Theorem~4.4]{HJ22}, the class $\EE$ is thick
and contains ${}_{\Q,A}\Inj$.
Hence $\EI$ is extension-closed in $\I$ and contains
all its projective-injective objects.

For $X\in\EI$, by Lemma~\ref{lem:canonical-I} one has
conflations
\[
 0\to\mathbb K(X)\to\mathbb F(X)\to X\to0
 \qquad\text{and}\qquad
 0\to X\to\mathbb G(X)\to\mathbb C(X)\to0
\]
in $\I$ with
$\mathbb F(X),\mathbb G(X)\in{}_{\Q,A}\Inj$.
Since $\EE$ is thick, $\mathbb K(X)$ and $\mathbb C(X)$
also belong to $\EE$.
Thus both sequences are conflations in $\EI$, showing
that $\EI$ has enough projectives and injectives.
Every projective or injective object of $\EI$ is a direct
summand of one of these middle terms and therefore
belongs to ${}_{\Q,A}\Inj$.
This proves the asserted equality and shows that $\EI$ is a Frobenius category.

By Lemma~\ref{lem:factor-ideal}, the inclusion
$\EI\hookrightarrow\Mod{\Q,A}_{\os}$ induces a fully
faithful triangle functor
$\underline{\EI}\to\KQ$.
Its essential image is $\KAI$ by definition, yielding
the required triangle equivalence.
\end{proof}

For a class $\mathcal X$ of objects in a
triangulated category $\mathcal T$, set
\[
\begin{aligned}
 {}^\perp\mathcal X
 &=
 \{Y\in\mathcal T\mid
   \Hom_{\mathcal T}(Y,\Sigma^nX)=0
   \text{ for all }X\in\mathcal X,\ n\in\mathbb Z\},\\
 \mathcal X^\perp
 &=
 \{Y\in\mathcal T\mid
   \Hom_{\mathcal T}(X,\Sigma^nY)=0
   \text{ for all }X\in\mathcal X,\ n\in\mathbb Z\}.
\end{aligned}
\]

\begin{prp}\label{prp:right-orthogonal}
The functor $R_I$ induces a triangle equivalence
\[
 R_I:\DQ\xrightarrow{\sim}(\KAI)^\perp.
\]
More precisely, an object $X\in\KI$ belongs to $(\KAI)^\perp$
if and only if the unit $\eta_X:X\to R_IQ_IX$ is an isomorphism.
\end{prp}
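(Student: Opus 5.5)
This is the standard argument for a triangle functor with a fully faithful right adjoint whose kernel is a given thick subcategory. We have the adjunction $(Q_I,R_I)$ from Proposition~\ref{prp:right-adjoint}, with $R_I$ fully faithful, and $\KAI=\Ker Q_I$ by Definition~\ref{dfn:acyclic-I}. Everything is formal triangulated-category reasoning; the only inputs from earlier are these two facts and that $Q_I$, $R_I$ are triangle functors.

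\emph{Step 1: the essential image of $R_I$ lies in $(\KAI)^\perp$.} Let $Y\in\DQ$, $Z\in\KAI$ and $n\in\mathbb Z$. Since $R_I$ is a triangle functor,
\[
\Hom_{\KI}(Z,\Sigma^nR_IY)\cong\Hom_{\KI}(Z,R_I\Sigma^nY)\cong\Hom_{\DQ}(Q_IZ,\Sigma^nY)=0,
\]
because $Q_IZ=0$. So $R_I$ maps $\DQ$ into $(\KAI)^\perp$. Being fully faithful, it induces a fully faithful triangle functor $\DQ\to(\KAI)^\perp$. Note that $(\KAI)^\perp$ is a triangulated (indeed thick) subcategory of $\KI$.

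\emph{Step 2: the unit characterization.} Let $X\in\KI$ and complete the unit to a triangle
\[
X\xrightarrow{\eta_X}R_IQ_IX\longrightarrow Z\longrightarrow\Sigma X.
\]
Apply $Q_I$. Since $R_I$ is fully faithful, the counit $Q_IR_I\to\Id$ is invertible. The triangle identity $\varepsilon_{Q_IX}\circ Q_I(\eta_X)=\Id_{Q_IX}$ then shows that $Q_I(\eta_X)$ is an isomorphism, so $Q_IZ=0$, i.e.\ $Z\in\KAI$.

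Now suppose $X\in(\KAI)^\perp$. By Step 1, $R_IQ_IX$ also lies in $(\KAI)^\perp$, and hence so does $Z$, since $(\KAI)^\perp$ is closed under cones and shifts. Then $Z\in\KAI\cap(\KAI)^\perp$, so $\Hom(Z,Z)=0$ and $Z=0$. Therefore $\eta_X$ is an isomorphism. Conversely, if $\eta_X$ is an isomorphism, then $X\cong R_IQ_IX$ lies in $(\KAI)^\perp$ by Step 1.

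\emph{Step 3: essential surjectivity.} Step 2 shows every $X\in(\KAI)^\perp$ is isomorphic to $R_I(Q_IX)$. Combined with Step 1, $R_I:\DQ\to(\KAI)^\perp$ is an equivalence, with quasi-inverse the restriction of $Q_I$.

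The only point needing care is that $Q_I(\eta_X)$ is invertible, which uses the triangle identity together with the invertibility of the counit. No step is a real obstacle.
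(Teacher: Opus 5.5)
Your proof is correct and follows the paper's argument step for step. The adjunction gives $\Im R_I\subseteq(\KAI)^\perp$, and for $X\in(\KAI)^\perp$ the cone of $\eta_X$ lies in $\KAI\cap(\KAI)^\perp$, so it vanishes. You also spell out two points the paper leaves implicit: the triangle-identity reason that $Q_I(\eta_X)$ is invertible, and the converse direction of the unit criterion.
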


\begin{proof}
For $Z\in\KAI$, $Y\in\DQ$, and $n\in\Z$, adjunction gives
\[
 \Hom_{\KI}(Z,\Sigma^nR_IY)
 \cong\Hom_{\DQ}(Q_IZ,\Sigma^nY)=0.
\]
Thus $\operatorname{Im}R_I\subseteq(\KAI)^\perp$.

Conversely, let $X\in(\KAI)^\perp$ and complete the unit
to a triangle in $\KI$
\begin{equation*}
 X\xrightarrow{\eta_X}R_IQ_I(X)\longrightarrow C_X
 \longrightarrow\Sigma X.
\end{equation*}
Note that $R_{I}$ is fully faithful. It follows that $Q_I(\eta_X)$
is an isomorphism. Hence $Q_I(C_X)=0$, and so $C_X\in\KAI$.
Since $X$ and $R_IQ_I(X)$ belong to $(\KAI)^\perp$,
so does $C_X$. Therefore
$\Hom_{\KI}(C_X,C_X)=0$, forcing $C_X\cong0$.
Thus $\eta_X$ is an isomorphism.
Hence the full faithfulness of $R_I$ gives the asserted
triangle equivalence.
\end{proof}

To construct a left adjoint of $Q_I$, we use special
precover sequences to show that $Q_I$ restricts to a
triangle equivalence ${}^\perp\KAI\to\DQ$.
For this purpose, set
\[
\begin{aligned}
 \WI&={}^{\perp_1}\I
 =\{X\in\Mod{\Q,A}\mid
   \Ext^1_{\Q,A}(X,I)=0\text{ for every }I\in\I\},\\
 \UU&={}^{\perp_1}\EI
 =\{X\in\Mod{\Q,A}\mid
   \Ext^1_{\Q,A}(X,E)=0\text{ for every }E\in\EI\}.
\end{aligned}
\]
The following proposition establishes that $(\WI,\I)$
and $(\UU,\EI)$ are complete hereditary cotorsion pairs
in $\Mod{\Q,A}$.

\begin{prp}\label{prp:two-cotorsion}
The pairs $(\WI,\I)$ and $(\UU,\EI)$ are complete hereditary
cotorsion pairs in $\Mod{\Q,A}$, generated respectively by
the sets
\[
\begin{aligned}
 \mathcal S_0
 &=\{F_q(A/J)\mid q\in\Q,\ J\text{is a left ideal of }A\}\quad \text{and}\quad
 \mathcal S_1
 &=\mathcal S_0\cup\{S_q(A)\mid q\in\Q\}.
\end{aligned}
\]
Moreover, $\WI\subseteq\EE$ and $\WI\cap\I={}_{\Q,A}\Inj.$
\end{prp}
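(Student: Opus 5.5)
The plan is to identify the right orthogonals of $\mathcal S_0$ and $\mathcal S_1$ directly, and then invoke the Eklof--Trlifaj theorem (in the form valid for Grothendieck categories with enough projectives, e.g.\ via Hovey/Gillespie small-object arguments) to get completeness of the cotorsion pairs cogenerated by these sets. Heredity and the last two assertions will then be checked separately.

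\emph{Identification of orthogonals.} By Lemma~\ref{lem:ext-adjunction}, $\Ext^1_{\Q,A}(F_q(A/J),X)\cong\Ext^1_A(A/J,X(q))$. By Baer's criterion, $X\in\mathcal S_0^{\perp_1}$ if and only if every $X(q)$ is injective, so $\mathcal S_0^{\perp_1}=\I$. Since $X\in\EE$ if and only if $\Ext^1_{\Q,A}(S_q(A),X)=0$ for all $q$ (recalled from the proof of \cite[Theorem D]{HJ24}), we also get $\mathcal S_1^{\perp_1}=\I\cap\EE=\EI$. Because $\Mod{\Q,A}$ is Grothendieck with a generating set of projectives $F_q(A)$, the cotorsion pair cogenerated by a set is complete. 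Thus $({}^{\perp_1}\I,\I)=(\WI,\I)$ and $({}^{\perp_1}\EI,\EI)=(\UU,\EI)$ are complete cotorsion pairs, generated by $\mathcal S_0$ and $\mathcal S_1$.

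\emph{Heredity.} For a complete cotorsion pair in a category with enough projectives and injectives, heredity is equivalent to the right class being closed under cokernels of monomorphisms. For $\I$ this holds because the objectwise sequences split: if $0\to X\to Y\to Z\to 0$ has $X,Y\in\I$, then $Z(q)$ is a summand of $Y(q)$. For $\EI$ we combine this with thickness of $\EE$ \cite[Theorem~4.4]{HJ22}.

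\emph{The inclusions.} For $\WI\subseteq\EE$, take $W\in\WI$. Since $\EI\subseteq\I$, we have $\WI\subseteq\UU$. I would then use the injective model structure: for $W\in{}^{\perp_1}\I$, take a special $\EE^{\perp_1}$-preenvelope $0\to W\to V\to E'\to 0$ with $E'\in\EE$ and $V\in\EE^{\perp_1}$ (completeness of $(\EE,\EE^{\perp_1})$, \cite{HJ22}). Fibrant objects are objectwise injective (\cite[Proposition~3.2]{HJ24}), so $V\in\I$, and $\Ext^1(W,V)=0$ forces a retraction. Hence $W$ is a summand of $V$ and lies in $\EE^{\perp_1}\cap\I$. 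This does not yet give $W\in\EE$, so I expect this to be the main obstacle.

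A better route is to show that every $F_q(A/J)$ lies in $\EE$, which amounts to showing $F_q(M)\in\EE$ for every $M$. Indeed $F_q(M)^\natural=\Q(q,-)\otimes_\kk M^\natural$, and $M^\natural$ has projective dimension $\le1$ over the hereditary ring $\kk$. Tensoring a length-one projective resolution of $M$ over $\kk$ with the projective $\Q(q,-)$ gives a projective resolution of length $\le 1$ in $\Mod{\Q}$. Then ${}^{\perp_1}\I$, being the class of summands of $\mathcal S_0\cup\{F_q(A)\}$-filtered objects (Eklof--Trlifaj), lies in $\EE$, since $\EE$ is thick and closed under transfinite extensions: $\EE={}^{\perp_1}(\EE^{\perp_1})$ in the Hovey triple $(\Mod{\Q,A},\EE,\EE^{\perp_1})$, and left classes are closed under filtrations by Eklof's lemma.

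Finally, $\WI\cap\I$ is the core of a complete hereditary cotorsion pair. Such an object $X$ satisfies $\Ext^1(X,X)=0$ and embeds in $\mathbb G(X)\in{}_{\Q,A}\Inj$ with cokernel $\mathbb C(X)\in\I$. Since $\Ext^1(\mathbb C(X),X)$ need not vanish, I would instead use $\Ext^1(W,X)=0$ for all $W\in\WI$. This makes $X$ Ext-injective against $\WI$, and $\WI$ contains all $F_q(A/J)$ and (via the above) all $\Mod{\Q,A}$-objects of the form ${}^{\perp_1}\I$. More directly, $\Ext^1_{\Q,A}(\mathbb C(X),X)=0$ follows from $\mathbb C(X)\in\WI$: it is the cokernel of $X\to\mathbb G(X)$ between objects of $\WI$ (the injective $\mathbb G(X)\in\WI$), and heredity gives closure of $\WI$ under cokernels of monomorphisms. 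Hence the sequence splits and $X\in{}_{\Q,A}\Inj$. The converse, ${}_{\Q,A}\Inj\subseteq\WI\cap\I$, is immediate.
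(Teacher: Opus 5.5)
\emph{Verdict: there is a genuine gap, in the proof of $\WI\cap\I\subseteq{}_{\Q,A}\Inj$; the earlier parts are fine.}

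\textbf{What is fine.} Your identification $\mathcal S_0^{\perp_1}=\I$ and $\mathcal S_1^{\perp_1}=\EI$, the completeness argument and the heredity argument coincide with the paper's.

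Your second route to $\WI\subseteq\EE$ is correct: every $F_q(M)$ lies in $\EE$, and then Eklof--Trlifaj plus Eklof's lemma apply to $\EE={}^{\perp_1}(\EE^{\perp_1})$. It is, however, heavier than necessary. The fact you already quote, $\EE^{\perp_1}\subseteq\I$ (fibrant objects are objectwise injective), gives at once $\WI={}^{\perp_1}\I\subseteq{}^{\perp_1}(\EE^{\perp_1})=\EE$; this is the paper's argument. Even your observation $\mathcal S_0\subseteq\EE$ gives $\EE^{\perp_1}\subseteq\mathcal S_0^{\perp_1}=\I$, hence the same conclusion, without any filtration description of $\WI$.

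You should drop your first attempt. The vanishing $\Ext^1(W,V)=0$ does not split $0\to W\to V\to E'\to0$; that would need $\Ext^1(E',W)=0$.

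\textbf{The gap.} You claim $\mathbb C(X)\in\WI$ because heredity makes $\WI$ closed under cokernels of monomorphisms. That is false. Heredity closes the \emph{left} class under kernels of epimorphisms and the \emph{right} class under cokernels of monomorphisms, as you correctly said in your heredity paragraph. For a counterexample, take $\Z\xrightarrow{2}\Z$ for the pair (projectives, all modules).

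Worse, the claim is circular here. Let $Y\in\I$ and apply $\Hom_{\Q,A}(-,Y)$ to $0\to X\to\mathbb G(X)\to\mathbb C(X)\to0$. Using $\Ext^1_{\Q,A}(\mathbb G(X),Y)=0$ and Lemma~\ref{lem:factor-ideal}, you get $\Ext^1_{\Q,A}(\mathbb C(X),Y)\cong\Hom_{\KI}(X,Y)$. So $\mathbb C(X)\in\WI$ holds exactly when $X\cong0$ in $\KI$, that is, when $X$ is already injective.

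\textbf{The fix.} Use the other canonical sequence, $0\to\mathbb K(X)\to\mathbb F(X)\to X\to0$. Here $\mathbb K(X)\in\I$ by Lemma~\ref{lem:canonical-I}, so $X\in\WI$ forces the sequence to split. Hence $X$ is a summand of $\mathbb F(X)\in{}_{\Q,A}\Inj$. This is the paper's argument, phrased there as follows: by Lemma~\ref{lem:Ext-comparison}, $\WI\cap\I$ is the class of projective objects of the exact category $\I$, and this class is ${}_{\Q,A}\Inj$ by Proposition~\ref{prp:I-Frobenius}.

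\textbf{The reverse inclusion.} The inclusion ${}_{\Q,A}\Inj\subseteq\WI$, and likewise your parenthetical $\mathbb G(X)\in\WI$, is not formal. An injective object need not be $\Ext^1$-orthogonal to a class of non-injective objects. Here it holds because injective $\Q$-shaped modules are projective in $\I$, by Proposition~\ref{prp:I-Frobenius}. That result rests on the Serre-functor identification $\prod_q G_q(M_q)\cong\bigoplus_q F_{\mathbb S^{-1}(q)}(M_q)$, and you need to cite it.
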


\begin{proof}
Let $X\in\Mod{\Q,A}$. For every $q\in\Q$ and every left
ideal $J$ of $A$, by Lemma~\ref{lem:ext-adjunction} one has
\[
 \Ext^1_{\Q,A}(F_q(A/J),X)
 \cong\Ext^1_A(A/J,X(q)).
\]
By Baer's criterion, these groups vanish for all $q$ and $J$
if and only if each $X(q)$ is injective. Thus
$\mathcal S_0^{\perp_1}=\I$.
Furthermore, the characterization of exact objects established in the proof
of \cite[Theorem~D]{HJ24} gives
\[
 \{S_q(A)\mid q\in\Q\}^{\perp_1}=\EE.
\]
It follows that
\[
 \mathcal S_1^{\perp_1}
 =\mathcal S_0^{\perp_1}
   \cap\{S_q(A)\mid q\in\Q\}^{\perp_1}
 =\I\cap\EE=\EI.
\]
Since $\Mod{\Q,A}$ is a Grothendieck category with enough
projectives, it follows from \cite[Corollary~2.15(3)]{SS11} that
$(\WI,\I)$ and $(\UU,\EI)$ are complete cotorsion pairs
generated by $\mathcal S_0$ and $\mathcal S_1$, respectively.

By Proposition~\ref{prp:I-Frobenius}, the class $\I$ is
closed under extensions and contains ${}_{\Q,A}\Inj$.
If
\[
 0\longrightarrow X\longrightarrow Y
 \longrightarrow Z\longrightarrow0
\]
is exact with $X,Y\in\I$, then it is objectwise split.
Hence each $Z(q)$ is a direct summand of $Y(q)$ and is
therefore injective. Thus $\I$ is also closed under
cokernels of monomorphisms.
Since $\EE$ is thick and contains ${}_{\Q,A}\Inj$,
the same closure properties hold for $\EI=\EE\cap\I$.
It follows that the two cotorsion pairs are hereditary.
The inclusion $\EE^{\perp_1}\subseteq\I$ and the cotorsion
pair $(\EE,\EE^{\perp_1})$ yield
\[
 \WI={}^{\perp_1}\I
 \subseteq{}^{\perp_1}(\EE^{\perp_1})
 =\EE.
\]
 Since $\Ext^1_{\Q,A}(X,I)\cong\Ext^1_{\I}(X,I)$
for $X,I\in\I$ by Lemma~\ref{lem:Ext-comparison},
 the objects of $\WI\cap\I$ are precisely the
projective objects of the exact category $\I$.
By Proposition~\ref{prp:I-Frobenius}, we obtain that $\WI\cap\I=\Prj(\I)={}_{\Q,A}\Inj.$
\end{proof}

Define
\[
 \FF=\UU\cap\I
 =\{F\in\I\mid\Ext^1_{\I}(F,Z)=0\text{ for every }Z\in\EI\}.
\]
The equality follows from Lemma~\ref{lem:Ext-comparison}.

Following \cite[Section~3.1]{SS11}, a cotorsion pair
$(\mathcal X,\mathcal Y)$ in a Frobenius exact category
$\mathcal C$ is called \emph{hereditary} if
$\Omega\mathcal X\subseteq\mathcal X$, and
\emph{cohereditary} if
$\Omega^{-1}\mathcal X\subseteq\mathcal X$.
Here $\Omega X$ and $\Omega^{-1}X$ are defined by conflations
\[
 0\to\Omega X\to P\to X\to0
 \qquad\text{and}\qquad
 0\to X\to J\to\Omega^{-1}X\to0,
\]
with $P$ and $J$ projective-injective in $\mathcal C$.
These conditions are independent of the chosen conflations.

We next show that $(\UU,\EI)$ restricts to a complete
hereditary and cohereditary cotorsion pair $(\FF,\EI)$
in $\I$. This allows us to apply
\cite[Proposition~3.9]{SS11} to obtain a Bousfield localizing
pair in $\underline{\I}\simeq\KI$.

\begin{prp}\label{prp:restricted-cotorsion}
The cotorsion pair $(\UU,\EI)$ in $\Mod{\Q,A}$ restricts
to a complete hereditary and cohereditary cotorsion pair $(\FF,\EI)$ in the Frobenius
exact category $\I$.
\end{prp}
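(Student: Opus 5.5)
We need to show three things: $(\FF,\EI)$ is a cotorsion pair in $\I$; it is complete, i.e.\ the approximating sequences can be chosen inside $\I$; and it is hereditary and cohereditary in the sense of \cite[Section~3.1]{SS11}.

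\emph{Cotorsion pair.} By Lemma~\ref{lem:Ext-comparison}, $\Ext^1_{\I}$ agrees with $\Ext^1_{\Q,A}$ on $\I$. Hence
\[
 {}^{\perp_1}\EI\cap\I=\UU\cap\I=\FF
\]
computed in $\I$. For the other orthogonal, let $Y\in\I$ with $\Ext^1_{\I}(F,Y)=0$ for all $F\in\FF$. Completeness of $(\UU,\EI)$ in $\Mod{\Q,A}$ (Proposition~\ref{prp:two-cotorsion}) gives an exact sequence $0\to Y\to E\to U\to 0$ with $E\in\EI$ and $U\in\UU$. Since $Y,E\in\I$, the proof of Proposition~\ref{prp:two-cotorsion} shows $U(q)$ is a summand of $E(q)$ for each $q$, so $U\in\I$ and hence $U\in\FF$. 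The sequence therefore splits, and $Y$ is a summand of $E$. As $\EI$ is closed under summands (both $\EE$ and $\I$ are), $Y\in\EI$.

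\emph{Completeness.} For $X\in\I$, the special $\EI$-preenvelope $0\to X\to E\to U\to0$ lies in $\I$ by the same cokernel argument, so $U\in\FF$. For a precover, take $0\to E'\to U'\to X\to0$ in $\Mod{\Q,A}$ with $E'\in\EI$ and $U'\in\UU$. Because $\I$ is extension-closed (Proposition~\ref{prp:I-Frobenius}), $U'\in\I$, so $U'\in\FF$. Both sequences are conflations in $\I$, since the two exact structures coincide.

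\emph{Hereditary and cohereditary.} I will use the syzygy sequences of Lemma~\ref{lem:canonical-I}, which is legitimate because the conditions are independent of the chosen conflations.
\begin{itemize}
\item \textbf{Hereditary:} Let $F\in\FF$ and use $0\to\mathbb K(F)\to\mathbb F(F)\to F\to0$. Here $\mathbb F(F)\in{}_{\Q,A}\Inj\subseteq\WI\subseteq\UU$; the last inclusion holds since $\EI\subseteq\I$ gives ${}^{\perp_1}\I\subseteq{}^{\perp_1}\EI$. Heredity of $(\UU,\EI)$ in $\Mod{\Q,A}$ means $\UU$ is closed under kernels of epimorphisms, so $\mathbb K(F)\in\UU\cap\I=\FF$.
\item \textbf{Cohereditary:} Use $0\to F\to\mathbb G(F)\to\mathbb C(F)\to0$. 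For $Z\in\EI$, the long exact sequence gives
\[
 \Ext^1(\mathbb G(F),Z)\to\Ext^1(F,Z)\to\Ext^2(\mathbb C(F),Z)\to\Ext^2(\mathbb G(F),Z),
\]
which is not directly enough. Instead, I use that $\EI$ is closed under $\Omega$ in $\I$: for $Z\in\EI$, the object $\mathbb K(Z)\in\EI$, by thickness of $\EE$ as in Proposition~\ref{prp:EI-Frobenius}. Then
\[
 \Ext^1_{\I}(\mathbb C(F),Z)\cong\underline{\Hom}(\mathbb C(F),\Sigma Z)\cong\underline{\Hom}(F,Z)\cong\underline{\Hom}(F,\Sigma\,\mathbb K(Z))\cong\Ext^1_{\I}(F,\mathbb K(Z))=0,
\]
using Lemma~\ref{lem:Ext-comparison}, $\Sigma F\cong\mathbb C(F)$ and $\Sigma\,\mathbb K(Z)\cong Z$ in the stable category $\underline{\I}\simeq\KI$. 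Thus $\mathbb C(F)\in\FF$.
\end{itemize}

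The main obstacle I expect is this cohereditary step. It is not inherited from $\Mod{\Q,A}$, and it requires the stable-category shift identifications together with closure of $\EI$ under $\Omega$ (equivalently $\Sigma^{-1}$), which is what makes $\EI$ triangulated in $\KI$.
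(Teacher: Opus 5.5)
Your proof is correct and follows the paper's argument in three places: identifying $\FF$ as the left orthogonal of $\EI$ inside $\I$, keeping the approximation sequences inside $\I$ via objectwise splitting and extension-closure, and proving cohereditarity by the stable Hom--Ext chain through $\Sigma\,\mathbb K(Z)\cong Z$. The one variation is heredity: you deduce $\mathbb K(F)\in\FF$ from the resolving property of $\UU$ in $\Mod{\Q,A}$ together with ${}_{\Q,A}\Inj\subseteq\WI\subseteq\UU$, which is slightly more elementary, whereas the paper treats $\Omega$ and $\Omega^{-1}$ symmetrically by the same stable computation, using $\mathbb C(Z)\in\EI$.
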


\begin{proof}
Let $X\in\I$. Since $(\UU,\EI)$ is a complete cotorsion pair by Proposition \ref{prp:two-cotorsion},  we have two exact
sequences in $\Mod{\Q,A}$
\begin{equation*}
 0\to Z_X\to F_X\to X\to0
 \qquad\text{and}\qquad
 0\to X\to Z^X\to F^X\to0,
\end{equation*}
with $Z_X,Z^X\in\EI$ and $F_X,F^X\in\UU$.
Both sequences are  objectwise split, since $Z_X$ and $X$ are
in $\I$. Thus $F_X(q)\cong Z_X(q)\oplus X(q)$
and $Z^X(q)\cong X(q)\oplus F^X(q)$ for any $q\in \Q.$
It follows that $F_X$ and $F^X$ are in $\I$,
so both belong to $\FF$. Hence
both short exact sequences are conflations in $\I$.
Recall that
\[
 \FF=\{F\in\I\mid
 \Ext^1_{\I}(F,Z)=0\text{ for every }Z\in\EI\}.
\]
Suppose that $X\in\I$ satisfies
$\Ext^1_{\I}(F,X)=0$ for every $F\in\FF$.
The conflation $0\to X\to Z^X\to F^X\to0$
splits because $F^X\in\FF$. Thus $X$ is a direct summand
of $Z^X\in\EI$, and hence belongs to $\EI$.
Together with the two conflations constructed above, this shows that
$(\FF,\EI)$ is a complete cotorsion pair in $\I$.

It remains to prove that $(\FF,\EI)$ is hereditary and cohereditary.
For $Z\in\EI$, we have two conflations in $\I$
\[
 0\to\mathbb K(Z)\to\mathbb F(Z)\to Z\to0
 \qquad\text{and}\qquad
 0\to Z\to\mathbb G(Z)\to\mathbb C(Z)\to0
\]
with $\mathbb F(Z)$ and $\mathbb G(Z)$ projective-injective in $\I$.
By Lemma~\ref{lem:canonical-I} and \cite[Theorem 4.4]{HJ22}, we obtain that
$\mathbb K(Z)$  and $\mathbb C(Z)$ belong to $\EI$.
Moreover, these conflations give $\mathbb K(Z)\cong\Sigma^{-1}Z$ and $\mathbb C(Z)\cong\Sigma Z$ in $\KI.$
Now let $F$ be in $\FF$. By Lemma~\ref{lem:canonical-I}, both
$\mathbb K(F)$ and $\mathbb C(F)$ belong to $\I$.
It follows from Lemma~\ref{lem:Ext-comparison} that
\begin{align*}
 \Ext^1_{\I}(\mathbb K(F),Z)
 &\cong\Hom_{\KI}(\Sigma^{-1}F,\Sigma Z)\\
 &\cong\Hom_{\KI}(F,\Sigma\mathbb C(Z))\\
 &\cong\Ext^1_{\I}(F,\mathbb C(Z))=0,
\end{align*}
and
\begin{align*}
 \Ext^1_{\I}(\mathbb C(F),Z)
 &\cong\Hom_{\KI}(\Sigma F,\Sigma Z)\\
 &\cong\Hom_{\KI}(F,\Sigma\mathbb K(Z))\\
 &\cong\Ext^1_{\I}(F,\mathbb K(Z))=0.
\end{align*}
These vanishings hold for every $Z\in\EI$, so $\mathbb K(F),\mathbb C(F)\in\FF$.
Thus $\FF$ is closed under syzygies and cosyzygies,
proving that $(\FF,\EI)$ is hereditary and cohereditary.
\end{proof}

\begin{dfn}\cite{Bondarko}
Let $\DD$ be a triangulated category. A \emph{$t$-structure} in $\DD$ is a pair $(\X, \Y)$ of full subcategories satisfying the following properties:
\begin{enumerate}
\item $\mathrm{Hom}_{\DD}(X [1],Y)=0$ for all $X\in\X$ and $Y\in\Y;$
\item $\X[1]\subseteq\X$ (or $\Y[-1]\subseteq\Y$);
\item For each $M\in \DD$, there is a triangle $X[1]\to M\to Y\to X[2]$
with $X\in \X$ and $Y\in \Y.$
\end{enumerate}
A $t$-structure $(\X,\Y)$ is called a \emph{Bousfield localizing pair} if $\X$ is a triangulated subcategory of $\DD.$ Such pairs are also called semi-orthogonal decompositions.
\end{dfn}

\begin{cor}\label{cor:bousfield-pair}
The pair $({}^\perp\KAI,\KAI)$ is a Bousfield localizing
pair in $\KI$, and ${}^\perp\KAI$ consists precisely of
objects isomorphic in $\KI$ to objects of $\FF$.
\end{cor}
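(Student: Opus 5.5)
The plan is to apply \cite[Proposition~3.9]{SS11} to the complete hereditary and cohereditary cotorsion pair $(\FF,\EI)$ in the Frobenius category $\I$ from Proposition~\ref{prp:restricted-cotorsion}. That result yields a Bousfield localizing pair $(\underline{\FF},\underline{\EI})$ in $\underline{\I}$. Transporting it along the triangle equivalence $\underline{\I}\simeq\KI$ of Proposition~\ref{prp:stable-I} gives a pair whose second term is $\KAI$, by Proposition~\ref{prp:EI-Frobenius}. What then remains is to identify the first term with ${}^\perp\KAI$. For safety I would also verify the three axioms directly, which makes the argument independent of the precise formulation in \cite{SS11}.

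\textbf{Orthogonality.} For $F\in\FF$, $Z\in\EI$ and $n\in\Z$, Lemma~\ref{lem:Ext-comparison} gives
\[
\Hom_{\KI}(F,\Sigma^nZ)\cong\Hom_{\KI}(F,\Sigma Z')\cong\Ext^1_{\I}(F,Z')=0,
\]
where $Z'\in\EI$ represents $\Sigma^{n-1}Z$. Such a $Z'$ exists because $\EI$ is closed under $\mathbb K$ and $\mathbb C$, as shown in the proof of Proposition~\ref{prp:restricted-cotorsion}.

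\textbf{Triangles and shift-closure.} For $X\in\I$, the special precover conflation $0\to Z_X\to F_X\to X\to0$ from Proposition~\ref{prp:restricted-cotorsion} is a conflation in $\I$. It therefore gives a triangle $Z_X\to F_X\to X\to\Sigma Z_X$ in $\KI$ with $F_X\in\FF$ and $Z_X\in\EI$. Rotating produces a triangle of the form required in axiom (3). Closure of $\FF$ under $\mathbb K$ and $\mathbb C$ (heredity and coheredity) shows that the image of $\FF$ is closed under $\Sigma^{\pm1}$. To see that it is a triangulated subcategory, take a triangle $F\to F'\to C\to\Sigma F$ with $F,F'$ in the image of $\FF$. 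Then $C$ is left orthogonal to all shifts of $\KAI$, by the long exact Hom sequence. So it suffices to prove the identification ${}^\perp\KAI=\FF$ up to isomorphism, since ${}^\perp\KAI$ is visibly triangulated.

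\textbf{Identification.} The inclusion ``$\supseteq$'' is the orthogonality computation above. For ``$\subseteq$'', let $X\in{}^\perp\KAI$ and take the triangle $Z_X\to F_X\xrightarrow{\pi}X\xrightarrow{w}\Sigma Z_X$. Since $w\in\Hom_{\KI}(X,\Sigma Z_X)=0$, the triangle splits, so $F_X\cong X\oplus Z_X$ in $\KI$. Equivalently, $\Sigma^{-1}X\to Z_X$ vanishes, so $X$ is a direct summand of $F_X$ in $\KI$. The step I expect to be the main obstacle is concluding that $X$ itself is isomorphic to an object of $\FF$, since $\FF$ need not be obviously closed under stable summands. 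The cleanest route is to use Lemma~\ref{lem:Ext-comparison}: the conflation $0\to Z_X\to F_X\to X\to0$ corresponds to $w=0$, so it splits in $\I$ and $X$ is a genuine direct summand of $F_X$. Since $\UU={}^{\perp_1}\EI$ is closed under direct summands, it follows that $X\in\UU\cap\I=\FF$. This proves the corollary.
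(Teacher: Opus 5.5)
Your proposal is correct. Its first paragraph is exactly the paper's proof: apply \cite[Proposition~3.9]{SS11} to the hereditary and cohereditary pair $(\FF,\EI)$ of Proposition~\ref{prp:restricted-cotorsion}, then transport along $\underline{\I}\simeq\KI$.

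The paper stops there. It asserts that the image of $\FF$ is the left orthogonal ${}^\perp\KAI$, relying on the cited result to supply that the first class of the pair is the whole left orthogonal, in particular closed under isomorphisms and summands in $\KI$.

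Your direct verification makes that step explicit and in fact makes the citation unnecessary. Once you have $\FF\subseteq{}^\perp\KAI$ (your orthogonality computation) and the triangles coming from the precover conflations, the pair axioms and triangulatedness hold automatically for ${}^\perp\KAI$. The only real content left is the inclusion ${}^\perp\KAI\subseteq\FF$ up to isomorphism.

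The obstacle you anticipate there is not really one. For $X\in\I$ lying in ${}^\perp\KAI$, Lemma~\ref{lem:Ext-comparison} gives $\Ext^1_{\I}(X,Z)\cong\Hom_{\KI}(X,\Sigma Z)=0$ for every $Z\in\EI$. So $X\in\FF$ directly from the description of $\FF$. Your detour through splitting the precover conflation and the summand-closure of $\UU$ is valid but not needed.

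In short, the paper's route is shorter but leans on the external result for the identification. Yours is self-contained and shows the slightly sharper fact that an object of $\I$ lies in ${}^\perp\KAI$ if and only if it lies in $\FF$.
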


\begin{proof}
By Proposition~\ref{prp:restricted-cotorsion} and
\cite[Proposition~3.9]{SS11}, the images of $\FF$ and $\EI$
in $\underline{\I}$ form a Bousfield localizing pair.
Under $\underline{\I}\simeq\KI$, the image of $\EI$ is
$\KAI$, so the image of $\FF$ is its left orthogonal
${}^\perp\KAI$.
\end{proof}

\begin{thm}\label{thm:left-equivalence}
The restriction
\[
 Q_I|_{{}^\perp\KAI}:{}^\perp\KAI\longrightarrow\DQ
\]
is a triangle equivalence.
\end{thm}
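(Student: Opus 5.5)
We will show that the restriction of $Q_I$ is fully faithful and essentially surjective; it is automatically a triangle functor, since ${}^\perp\KAI$ is a triangulated subcategory of $\KI$ by Corollary~\ref{cor:bousfield-pair}.

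\emph{Full faithfulness.} This is the standard Verdier-quotient argument. Since $\KAI=\Ker Q_I$ is thick, $Q_I$ factors through the Verdier quotient $\KI/\KAI$. By Corollary~\ref{cor:bousfield-pair}, every $X\in\KI$ sits in a triangle $X'\to X\to Z\to\Sigma X'$ with $X'\in{}^\perp\KAI$ and $Z\in\KAI$. Hence the composite ${}^\perp\KAI\hookrightarrow\KI\to\KI/\KAI$ is an equivalence: for $X\in{}^\perp\KAI$, roofs $X\xleftarrow{s}X''\to Y$ with cone of $s$ in $\KAI$ are equivalent to honest maps, because $\Hom(X,-)$ kills $\KAI$.

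It therefore suffices to show that the induced functor $\KI/\KAI\to\DQ$ is fully faithful. For this, we use the right adjoint $R_I$, which is fully faithful by Proposition~\ref{prp:right-adjoint}. The unit $\eta_X\colon X\to R_IQ_IX$ has cone in $\KAI$, as shown in the proof of Proposition~\ref{prp:right-orthogonal}. Hence, for $X,Y\in{}^\perp\KAI$,
\[
\Hom_{\KI}(X,Y)\cong\Hom_{\KI}(X,R_IQ_IY)\cong\Hom_{\DQ}(Q_IX,Q_IY).
\]
Here the first isomorphism is induced by $\eta_Y$ and holds because $X\in{}^\perp\KAI$ and the cone of $\eta_Y$ lies in $\KAI$; the second is adjunction. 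One then checks that the composite is exactly the map induced by $Q_I$, which follows from the triangle identities. This argument is direct and avoids the quotient-category formalism altogether.

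\emph{Essential surjectivity.} Let $Y\in\DQ$. Then $R_IY\in\KI$ and $Q_IR_IY\cong Y$. Take the Bousfield triangle $X'\to R_IY\to Z\to\Sigma X'$ with $X'\in{}^\perp\KAI$ and $Z\in\KAI$. Applying $Q_I$ kills $Z$, so $Q_IX'\cong Q_IR_IY\cong Y$. Concretely, $X'$ can be taken to be the special $\FF$-precover $F_X$ of a fibrant replacement $X$ of $Y$, from Proposition~\ref{prp:restricted-cotorsion}.

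\emph{Main obstacle.} The step most needing care is the existence of the Bousfield triangles, which is what allows both the orthogonality argument and the surjectivity. This is supplied by Corollary~\ref{cor:bousfield-pair} via \cite[Proposition~3.9]{SS11}. The only remaining point to verify is that the conflation $0\to Z_X\to F_X\to X\to 0$ in $\I$ induces a triangle in $\KI$ whose third term lies in $\KAI$, which holds since $Z_X\in\EI$.
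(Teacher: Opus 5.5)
Your proposal is correct and follows essentially the same argument as the paper. Full faithfulness comes from the unit triangle for $\eta_Y\colon Y\to R_IQ_IY$, whose cone lies in $\KAI$, combined with the adjunction $(Q_I,R_I)$; essential surjectivity comes from the Bousfield triangle of $R_IY$. The Verdier-quotient detour in your first paragraph is unnecessary, since your direct computation already suffices, and your triangle-identity check that the composite isomorphism is the map induced by $Q_I$ makes explicit a step the paper leaves implicit.
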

\begin{proof}
Let $F$ and $F'$ be in ${}^\perp\KAI$. Then there is a triangle in $\KI$
$$F'\xrightarrow{\eta_{F'}}R_IQ_I(F')\longrightarrow C_{F'}
 \longrightarrow\Sigma F'.$$
Since $R_{I}$ is fully faithful, it follows that $Q_{I}(C_{F'})=0$, and then $C_{F'}\in \KAI$.
Applying $\Hom_{\KI}(F,-)$ to the above triangle,
we obtain natural isomorphisms
\[
\begin{aligned}
 \Hom_{\KI}(F,F')
 &\cong\Hom_{\KI}(F,R_IQ_IF')\\
 &\cong\Hom_{\DQ}(Q_IF,Q_IF').
\end{aligned}
\]
Thus $Q_I|_{{}^\perp\KAI}$ is fully faithful.

Now take $Y\in \DQ$.
Since $({}^\perp\KAI,\KAI)$ is a Bousfield localizing
pair in $\KI$ by Corollary \ref{cor:bousfield-pair}, there is a triangle in $\KI$
$$\Sigma F\to R_{I}(Y)\to Z\to \Sigma^{2}F$$ with
$Z\in\KAI$ and $F\in{}^\perp\KAI$. Applying $Q_I$ to the above triangle, we obtain that $Q_I(\Sigma F)\cong Q_IR_I(Y)\cong Y$.
Note that $\Sigma F\in {}^\perp\KAI$ as ${}^\perp\KAI$ is a triangulated subcategory of $\KI$. It follows that $Q_I|_{{}^\perp\KAI}$ is essentially surjective and hence a triangle equivalence.
\end{proof}

Choose a quasi-inverse of the equivalence in
Theorem~\ref{thm:left-equivalence}, followed by the inclusion into
$\KI$, and denote it by $L_I:\DQ\to\KI$. It is fully faithful,
its essential image is ${}^\perp\KAI$, and $Q_IL_I\cong\Id_{\DQ}$.

\begin{prp}\label{prp:left-adjoint}
There is an adjunction
\[
 \xymatrix@C=3.7pc{
 \DQ\ar@<0.7ex>[r]^{L_I}&
 \KI\ar@<0.7ex>[l]^{Q_I}},\]
where the upper arrow is the left adjoint, and $L_I$ is fully faithful.
\end{prp}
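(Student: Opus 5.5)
The adjunction will be deduced from two facts already established: $L_I$ is, by construction, a quasi-inverse of the equivalence $Q_I|_{{}^\perp\KAI}$ of Theorem~\ref{thm:left-equivalence}, followed by the inclusion. Its essential image is ${}^\perp\KAI$, and there is a natural isomorphism $Q_IL_I\cong\Id_{\DQ}$. Full faithfulness of $L_I$ is then immediate, since it is the composite of an equivalence with the inclusion of a full subcategory. It remains to produce natural isomorphisms
\[
 \Hom_{\KI}(L_IY,X)\cong\Hom_{\DQ}(Y,Q_IX)
\]
for $Y\in\DQ$ and $X\in\KI$.

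The map is the composite
\[
 \Hom_{\KI}(L_IY,X)\xrightarrow{Q_I}\Hom_{\DQ}(Q_IL_IY,Q_IX)\cong\Hom_{\DQ}(Y,Q_IX),
\]
where the second isomorphism uses $Q_IL_I\cong\Id_{\DQ}$. Naturality in $X$ and $Y$ holds because $Q_I$ is a functor and the isomorphism $Q_IL_I\cong\Id$ is natural. To see that the first map is bijective, I would use the Bousfield localizing pair of Corollary~\ref{cor:bousfield-pair}. For $X\in\KI$ it gives a triangle
\[
 F'\longrightarrow X\longrightarrow Z\longrightarrow\Sigma F'
\]
with $F'\in{}^\perp\KAI$ and $Z\in\KAI$; after rotating the triangle from the definition and using that ${}^\perp\KAI$ is triangulated, $F'$ is a shift of the object appearing there. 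Apply $\Hom_{\KI}(L_IY,-)$ to this triangle. Since $L_IY\in{}^\perp\KAI$, the groups $\Hom(L_IY,\Sigma^nZ)$ vanish for all $n$, so $\Hom_{\KI}(L_IY,F')\cong\Hom_{\KI}(L_IY,X)$. On the other hand $Q_I(Z)=0$, so $Q_I$ sends $F'\to X$ to an isomorphism. The question therefore reduces to $X=F'\in{}^\perp\KAI$, where bijectivity of $Q_I$ on Hom sets is exactly the full faithfulness part of Theorem~\ref{thm:left-equivalence}. A commutative square comparing the two routes finishes the argument.

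An alternative route avoids the Bousfield triangle. Take the triangle $X\to R_IQ_IX\to C_X\to\Sigma X$ with $C_X\in\KAI$, as in the proof of Proposition~\ref{prp:right-orthogonal}. Orthogonality then gives
\[
 \Hom_{\KI}(L_IY,X)\cong\Hom_{\KI}(L_IY,R_IQ_IX)\cong\Hom_{\DQ}(Q_IL_IY,Q_IX)\cong\Hom_{\DQ}(Y,Q_IX),
\]
using the adjunction of Proposition~\ref{prp:right-adjoint}. This version seems cleaner, and I would adopt it.

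The only delicate point is checking that the composite isomorphism is natural and actually coincides with the map induced by $Q_I$, so that it defines an adjunction rather than an abstract bijection. Both the unit $X\to R_IQ_IX$ and the adjunction isomorphism of Proposition~\ref{prp:right-adjoint} are natural, so naturality in both variables follows formally.
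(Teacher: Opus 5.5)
Your proposal is correct, and the route you adopt is essentially the paper's proof: the paper also applies $\Hom_{\KI}(L_IY,-)$ to the unit triangle $X\to R_IQ_IX\to C_X\to\Sigma X$ with $C_X\in\KAI$, then uses the adjunction $(Q_I,R_I)$ and $Q_IL_I\cong\Id_{\DQ}$, and it gets full faithfulness of $L_I$ directly from its construction. Your first route via the Bousfield triangle would also work, and your observation that the composite is $f\mapsto Q_I(f)$ follows from the triangle identity $\varepsilon_{Q_IX}\circ Q_I(\eta_X)=\mathrm{id}$.
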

\begin{proof}
Let $X\in\KI$ and $Y\in\DQ$.  Consider the triangle
$$X\xrightarrow{\eta_X} R_IQ_I(X)\to C_{X}\to \Sigma X.$$
 As in the proof of Theorem \ref{thm:left-equivalence},  $C_{X}\in\KAI$. Since
$L_IY\in{}^\perp\KAI$, we obtain natural isomorphisms
\[
\begin{aligned}
 \Hom_{\KI}(L_IY,X)
 &\cong\Hom_{\KI}(L_IY,R_IQ_IX)\\
 &\cong\Hom_{\DQ}(Q_IL_IY,Q_IX)\\
 &\cong\Hom_{\DQ}(Y,Q_IX),
\end{aligned}
\]
The second isomorphism is due to that $(Q_{I},R_{I})$ is an adjoint pair, and the last follows by
$Q_IL_I\cong\Id_{\DQ}$. These isomorphisms are natural
in $X$ and $Y$, establishing the adjunction $(L_I,Q_I)$.
\end{proof}

The adjoint triple $(L_I,Q_I,R_I)$ yields the following
recollement. The associated unit and counit triangles
define the adjoints to the inclusion $\KAI\hookrightarrow\KI$
and induce the corresponding Verdier quotient equivalences.

\begin{thm}\label{thm:recollement}
Under Setup~\ref{setup:standing}, the inclusion $i_*:\KAI\to\KI$
and $Q_I$ extend to a recollement of triangulated categories
\[
\xymatrix@C=3.1pc{
 \KAI\ar[r]^{i_*}&
 \KI\ar@/_1.5pc/[l]_-{i^*}\ar@/^1.5pc/[l]^-{i^!}
 \ar[r]^{Q_I}&
 \DQ\ar@/_1.5pc/[l]_-{L_I}\ar@/^1.5pc/[l]^-{R_I}.}
\]
Moreover, $\KI/\KAI\simeq\DQ$ and $\KI/\Im L_I\simeq\KAI$.
\end{thm}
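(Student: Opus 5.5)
We already have the adjoint triple $(L_I,Q_I,R_I)$ with $L_I$ and $R_I$ fully faithful (Propositions~\ref{prp:right-adjoint} and \ref{prp:left-adjoint}), and $\KAI=\Ker Q_I$ by Definition~\ref{dfn:acyclic-I}. It remains to construct the adjoints $i^*$ and $i^!$ of the inclusion $i_*$ and to verify the quotient equivalences. I will do this by the standard argument that an adjoint triple with fully faithful outer functors extends to a recollement. The argument is kept explicit via the unit and counit triangles.

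For $i^!$, take $X\in\KI$ and complete the unit $\eta_X$ of $(Q_I,R_I)$ to a triangle
\[
 \Sigma^{-1}C_X\longrightarrow X\xrightarrow{\eta_X}R_IQ_IX\longrightarrow C_X .
\]
As in the proof of Theorem~\ref{thm:left-equivalence}, $C_X\in\KAI$. We set $i^!X:=\Sigma^{-1}C_X$. For $Z\in\KAI$, applying $\Hom_{\KI}(Z,-)$ and using $\Hom_{\KI}(Z,\Sigma^nR_IQ_IX)\cong\Hom_{\DQ}(Q_IZ,\Sigma^nQ_IX)=0$ gives $\Hom_{\KI}(Z,i^!X)\cong\Hom_{\KI}(Z,X)$. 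Since this triangle is functorial up to the usual uniqueness (the third term of a triangle with $R_IQ_IX\in(\KAI)^\perp$ is determined), $i^!$ becomes a right adjoint to $i_*$. Equivalently, and more cleanly, we can cite that $(\KAI,(\KAI)^\perp)$ is a Bousfield pair with $(\KAI)^\perp=\Im R_I$ (Proposition~\ref{prp:right-orthogonal}), and then invoke the standard fact that a Bousfield localizing pair yields a right adjoint to the inclusion.

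Dually, for $i^*$, use the counit $\varepsilon_X:L_IQ_IX\to X$ and the triangle $L_IQ_IX\to X\to Z_X\to\Sigma L_IQ_IX$. Since $Q_IL_I\cong\Id$, we get $Q_I\varepsilon_X$ invertible, so $Z_X\in\KAI$. We put $i^*X:=Z_X$. Because $L_IQ_IX\in{}^\perp\KAI$ (Corollary~\ref{cor:bousfield-pair} and Theorem~\ref{thm:left-equivalence}), applying $\Hom_{\KI}(-,Z)$ for $Z\in\KAI$ yields $\Hom(i^*X,Z)\cong\Hom(X,Z)$. Alternatively, this follows from the Bousfield pair $({}^\perp\KAI,\KAI)$ of Corollary~\ref{cor:bousfield-pair}. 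Exactness of $i^*,i^!$ is automatic for adjoints of triangle functors. Full faithfulness of $i_*$ holds since it is the inclusion of a full subcategory, and $\Im i_*=\KAI=\Ker Q_I$. This gives the recollement.

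For the quotients, $Q_I$ has a fully faithful right adjoint and kernel $\KAI$. By the standard Verdier localization criterion (Bousfield localization), $Q_I$ induces $\KI/\KAI\xrightarrow{\sim}\DQ$. Similarly, $i^*$ has fully faithful right adjoint $i_*$, so it is a Verdier localization with kernel $\Ker i^*$. The counit triangle shows $i^*X=0$ iff $X\cong L_IQ_IX$, i.e.\ $\Ker i^*=\Im L_I={}^\perp\KAI$. Hence $\KI/\Im L_I\simeq\KAI$.

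The only step requiring care is the functoriality of the triangle-defined $i^*,i^!$. To avoid cone non-functoriality, I would define them via the orthogonal pairs and the Yoneda representability just computed, rather than via chosen cones.
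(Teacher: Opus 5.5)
Your proposal is correct and follows essentially the same route as the paper. The paper's proof simply invokes (the dual of) Neeman's Proposition~9.1.18: the adjoint triple $(L_I,Q_I,R_I)$ has fully faithful outer functors and $\Ker Q_I=\KAI$, so it extends to the recollement and gives the two quotient equivalences. You unpack that citation by building $i^!$ and $i^*$ from the unit and counit triangles (equivalently, from the Bousfield pairs $(\KAI,(\KAI)^\perp)$ and $({}^\perp\KAI,\KAI)$) and by reading off $\Ker i^*=\Im L_I$, which is exactly the content of the cited result.
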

\begin{proof}
By Propositions \ref{prp:right-adjoint} and \ref{prp:left-adjoint}, we get that $(L_I,Q_I,R_I)$ is an adjoint triple.
So we obtain the desired recollement by (the dual of) \cite[Proposition 9.1.18]{Neeman01} and the corresponding triangle equivalences.
\end{proof}

\section{Compact objects in $\KI$ and $\KAI$}\label{sec:compact}

Throughout this section, we work under Setup~\ref{setup:standing}
and assume that $A$ is left noetherian.

\subsection{Noetherian objects and compact generators}
We recall the definitions of compact objects and compactly generated triangulated categories.

\begin{dfn}[{\cite[Definition~4.2]{HJ24}}]\label{dfn:compact-object}
Let $\mathcal A$ be an additive category with arbitrary
set-indexed coproducts. An object $C\in\mathcal A$ is
\emph{compact} if, for every set-indexed family
$\{X_i\}_{i\in I}$ in $\mathcal A$, the canonical map
\[
 \bigoplus_{i\in I}\Hom_{\mathcal A}(C,X_i)
 \longrightarrow
 \Hom_{\mathcal A}\left(C,\bigoplus_{i\in I}X_i\right)
\]
is an isomorphism. We denote by $\mathcal A^c$ the full
subcategory of compact objects.
\end{dfn}

\begin{dfn}[{\cite[Definition~4.3]{HJ24}}]\label{dfn:compact-generation}
Let $\mathcal T$ be a triangulated category with arbitrary
set-indexed coproducts.
\begin{enumerate}
\item
The category $\mathcal T$ is \emph{compactly generated}
if there is a set $\mathcal G\subseteq\mathcal T^c$ such that
\[
 \Hom_{\mathcal T}(G,\Sigma^nX)=0
 \quad\text{for all }G\in\mathcal G\text{ and }n\in\Z
 \quad\Longrightarrow\quad X\cong0.
\]
Such a set $\mathcal G$ is called a \emph{set of compact
generators}. We allow shifts in the detection condition;
equivalently, all shifts of $\mathcal G$ together form a generating
set in the unshifted convention of the cited definition.

\item
A full triangulated subcategory $\mathcal S\subseteq\mathcal T$
is \emph{thick} if it is closed under direct summands in
$\mathcal T$, and \emph{localizing} if it is closed under
arbitrary set-indexed coproducts in $\mathcal T$.
Every localizing subcategory is thick.

\item
For a collection $\mathcal G$ of objects of $\mathcal T$,
write $\Thick(\mathcal G)$ and $\Loc(\mathcal G)$ for the
smallest thick and localizing subcategories of $\mathcal T$
containing $\mathcal G$, respectively.
\end{enumerate}
\end{dfn}

We first prove that $\Mod{\Q,A}$ is locally noetherian and
that, for every noetherian object $M$ in $\Mod{\Q,A}$, the functor $\Ext^1_{\Q,A}(M,-)$
preserves direct sums.
Recall from \cite[Chapter~V, \S4]{Stenstrom75} that an object is \emph{noetherian} if its subobjects
satisfy the ascending chain condition. A Grothendieck category
is \emph{locally noetherian} if it has a set of noetherian
generators.

\begin{lem}\label{lem:locally-noetherian}
The category $\Mod{\Q,A}$ is locally noetherian. For
$M\in\Mod{\Q,A}$, the following conditions are equivalent:
\begin{eqc}
\item $M$ is noetherian;
\item $M$ is finitely generated;
\item $M$ is finitely presented;
\item $\supp M=\{q\in \Q \mid M(q)\neq 0\}$ is finite and each $M(q)$ is finitely generated over $A$.
\end{eqc}
Every such $M$ admits an exact sequence
\begin{equation*}
 0\longrightarrow K\longrightarrow P\longrightarrow M
 \longrightarrow0,
\end{equation*}
where $P$ is a finite direct sum of objects $F_{q}(A)$
and $K$ is finitely generated. Moreover,
$\Ext^1_{\Q,A}(M,-)$ preserves coproducts.
\end{lem}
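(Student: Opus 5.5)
The proof will rest on identifying $\Mod{\Q,A}$ with the category of left modules over a ring with enough idempotents, or working directly with the projective generators $F_q(A)$. The first step is to show that each $F_q(A)$ is noetherian; this gives a set of noetherian generators, and hence local noetherianity. By local boundedness, $F_q(A)(p)=\Q(q,p)\otimes_\kk A$ is nonzero only for $p\in N_+(q)$, a finite set. Each $\Q(q,p)$ is a finitely generated projective $\kk$-module, so $\Q(q,p)\otimes_\kk A$ is a finitely generated left $A$-module. Now take an ascending chain of subobjects $U_1\subseteq U_2\subseteq\cdots$ of $F_q(A)$. At each of the finitely many $p\in N_+(q)$, the chain $U_n(p)$ is an ascending chain of $A$-submodules of a finitely generated module over the left noetherian ring $A$, so it stabilizes. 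Hence the whole chain stabilizes. A finite direct sum of the $F_q(A)$ is then noetherian as well.

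Next come the equivalences, in the order (i)$\Rightarrow$(ii)$\Rightarrow$(iv)$\Rightarrow$(ii)$\Rightarrow$(i) and (ii)$\Leftrightarrow$(iii).
\begin{itemize}
\item (i)$\Rightarrow$(ii): this is standard, since $M$ is the directed union of its finitely generated subobjects.
\item (ii)$\Rightarrow$(iv): a finitely generated $M$ is a quotient of a finite sum $P=\bigoplus_{k}F_{q_k}(A)$. Then $\supp M\subseteq\bigcup_k N_+(q_k)$, which is finite, and each $M(p)$ is a quotient of the finitely generated module $P(p)$.
\item (iv)$\Rightarrow$(ii): for each $p\in\supp M$, choose an epimorphism $A^{n_p}\to M(p)$. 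By the adjunction $(F_p,E_p)$ these induce a morphism $\bigoplus_{p\in\supp M}F_p(A^{n_p})\to M$. It is surjective because its evaluation at $p$ contains the component $A^{n_p}\to M(p)$, via the counit $F_p(M(p))(p)\to M(p)$ induced by the identity of $p$.
\item (ii)$\Rightarrow$(i): a quotient of the noetherian object $P$ is noetherian.
\item (ii)$\Leftrightarrow$(iii): given $0\to K\to P\to M\to0$ with $P$ a finite sum of $F_q(A)$, the kernel $K$ is a subobject of the noetherian $P$, hence noetherian and so finitely generated. Thus $M$ is finitely presented. The converse is trivial.
\end{itemize}
This also yields the displayed exact sequence.

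Finally, I would show that $\Ext^1_{\Q,A}(M,-)$ preserves coproducts for noetherian $M$, using the sequence $0\to K\to P\to M\to0$. The functor $\Hom(P,-)$ is exact, and it commutes with coproducts because $P$ is finitely generated projective and each $F_q(A)$ is finitely presentable. The long exact sequence gives
\[
\Hom(P,X)\to\Hom(K,X)\to\Ext^1(M,X)\to0,
\]
natural in $X$. Since $K$ and $P$ are finitely generated, hence finitely presented, $\Hom(K,-)$ and $\Hom(P,-)$ commute with coproducts. The reason is that in a locally noetherian (indeed locally finitely presented) Grothendieck category, $\Hom(C,-)$ commutes with direct limits for finitely presented $C$, and coproducts are direct limits of finite sums. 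Comparing the sequence for $\bigoplus X_i$ with the direct sum of the sequences for the $X_i$, and using that cokernels commute with coproducts, gives the claim. The main obstacle is this last identification: one must check that finitely presented in the categorical sense, namely that $\Hom$ commutes with filtered colimits, agrees with having a presentation by finite sums of the $F_q(A)$. I would handle it directly. Since each $F_q(A)$ represents $X\mapsto X(q)$ via $\Hom(F_q(A),X)\cong X(q)$, which visibly commutes with colimits, the five lemma applied to a finite presentation $P_1\to P_0\to K\to 0$ transfers the property to $K$ and to $M$.
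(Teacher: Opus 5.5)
Your proposal is correct and follows the paper's proof essentially step by step. It uses the same ingredients: noetherianity of each $F_q(A)$ from its finite support and noetherian values, the same epimorphism $\bigoplus_{p\in\supp M}F_p(A^{n_p})\to M$ for (iv)$\Rightarrow$(ii), and the same exact sequence $\Hom_{\Q,A}(P,-)\to\Hom_{\Q,A}(K,-)\to\Ext^1_{\Q,A}(M,-)\to0$ for the coproduct statement. The only difference is that you prove (i)$\Leftrightarrow$(ii)$\Leftrightarrow$(iii) and the categorical finite presentability of $P$ and $K$ directly, via $\Hom_{\Q,A}(F_q(A),X)\cong X(q)$ and a five-lemma argument, whereas the paper cites Stenstr\"om for these facts.
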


\begin{proof}
For each $q\in\Q$, the object $F_{q}(A)$ has finite
support by local boundedness of $\Q$. Each value $F_{q}(A)(p)=\Q(q,p)\otimes_{\kk}A$
is a finitely generated projective $A$-module, hence
noetherian since $A$ is left noetherian. Then any ascending
chain of subobjects of $F_{q}(A)$ therefore stabilizes at each
object, and finite support ensures that the chain itself
stabilizes. Thus every $F_{q}(A)$ is noetherian. Since $\{F_{q}(A)\}_{q\in\Q}$ generate $\Mod{\Q,A}$ by \cite[Proposition 3.12]{HJ22}, it follows that $\Mod{\Q,A}$ is locally
noetherian.

Since $\Mod{\Q,A}$ is locally noetherian, its noetherian,
finitely generated, and finitely presented objects coincide;
see \cite[Chapter~V, \S4]{Stenstrom75}.
Thus \textup{(i)}--\textup{(iii)} are equivalent. Since every finitely generated object is a quotient of a finite direct sum of the objects $F_q(A)$,
it is noetherian and has finite support with finitely generated values. This proves $\textup{(ii)}\Rightarrow\textup{(iv)}$.

Suppose that \textup{(iv)} holds.
Since $M(q)$ is finitely generated for  each $q\in\supp M$,
there is an epimorphism $u_q:A^{m_q}\to M(q)$
with $m_q$ finite. Under the adjunction $(F_q,E_q)$,
these maps induce a morphism
\[
 \phi:\bigoplus_{q\in\supp M}F_{q}(A)^{\oplus m_q}
 \longrightarrow M.
\]
For each $p\in\supp M$, the component $\phi_p$ satisfies $
 \phi_p(\mathrm{id}_p\otimes v)=u_p(v)$,
where $v\in A^{m_p}$ and $\mathrm{id}_p\otimes v$ lies in the summand indexed
by $p$. Thus $\phi_p$ is surjective, and so $\phi$ is an epimorphism.
Since $\supp M$ is finite, $M$ is finitely generated, proving \textup{(ii)}.

Suppose that  $M$ is finitely generated. There is an exact sequence
\begin{equation*}
 0\longrightarrow K\longrightarrow P\longrightarrow M
 \longrightarrow0,
\end{equation*}
 with $P$ a finite direct sum of
the objects $F_q(A)$. Thus $K$ is noetherian, hence finitely
generated.
Thus we have the exact sequence
\[
 \Hom_{\Q,A}(P,-)\longrightarrow\Hom_{\Q,A}(K,-)
 \longrightarrow\Ext^1_{\Q,A}(M,-)\longrightarrow0.
\]
Since $P$ and $K$ are finitely presented,
both $\Hom_{\Q,A}(P,-)$ and $\Hom_{\Q,A}(K,-)$ preserve coproducts. Consequently, $\Ext^1_{\Q,A}(M,-)$ preserves coproducts.
\end{proof}

\begin{lem}\label{lem:coproducts-T}
The category $\KI$ has arbitrary coproducts.
\end{lem}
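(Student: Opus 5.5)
The plan is to show that the direct sum in $\Mod{\Q,A}$ of a family of objects of $\I$ again lies in $\I$, and that this module-level direct sum is a coproduct in the stable category $\underline{\I}\simeq\KI$ (Proposition~\ref{prp:stable-I}).

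\emph{Step 1: $\I$ is closed under coproducts.} Since $A$ is left noetherian, arbitrary direct sums of injective left $A$-modules are injective (Bass--Papp). Coproducts in the functor category $\Mod{\Q,A}$ are computed objectwise, so $(\bigoplus_i X_i)(q)=\bigoplus_i X_i(q)$. Hence, for a family $\{X_i\}$ in $\I$, the direct sum $\bigoplus_i X_i$ lies in $\I$. This is the only place where the noetherian hypothesis enters, and it is the step I regard as essential. Without it, $\I$ need not be closed under sums, and one would instead have to argue through the Bousfield pair or Brown representability, which I would avoid.

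\emph{Step 2: the direct sum is a coproduct in $\underline{\I}$.} Put $X=\bigoplus_i X_i$ with its canonical injections. For any $Y\in\I$ we have $\Hom_{\Q,A}(X,Y)\cong\prod_i\Hom_{\Q,A}(X_i,Y)$, and we must show that this bijection respects the ideal of maps factoring through ${}_{\Q,A}\Inj$. Lemma~\ref{lem:factor-ideal} identifies this ideal with the maps factoring through contractible objects. Moreover, a morphism $u:Z\to Y$ with $Z\in\I$ factors through a projective-injective of $\I$ if and only if it factors through the deflation $\mathbb F(Y)\to Y$. Indeed, projectives of $\I$ lift along deflations, and $\mathbb F(Y)\in{}_{\Q,A}\Inj$ by Lemma~\ref{lem:canonical-I}.

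Now suppose each component $u_i:X_i\to Y$ factors through $\mathbb F(Y)\to Y$ via a map $v_i$. The maps $v_i$ assemble into a single map $X\to\mathbb F(Y)$, so $u$ factors through $\mathbb F(Y)$. The converse direction is clear by composing with the injections. Therefore
\[
\Hom_{\KI}(X,Y)\cong\prod_i\Hom_{\KI}(X_i,Y),
\]
and $X$ is a coproduct of the $X_i$ in $\KI$.

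\emph{Step 3: compatibility with $\KQ$.} It is worth noting that the same argument, applied in $\Mod{\Q,A}_{\os}$ with $\mathbb F(Y)$ contractible, shows that this coproduct agrees with the coproduct in $\KQ$. Consequently the inclusion $\KI\hookrightarrow\KQ$ preserves coproducts, which will be needed later for compactness arguments. No serious obstacle is expected beyond correctly invoking Bass--Papp in Step 1.
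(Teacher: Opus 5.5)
Your proof is correct, and it differs from the paper's at the one substantive point.

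\textbf{The paper's route.} Like you, the paper first uses that $\I$ is closed under direct sums because $A$ is left noetherian. It then additionally shows that the projective-injectives ${}_{\Q,A}\Inj$ of $\I$ are closed under direct sums. For this it uses that $\Mod{\Q,A}$ is locally noetherian (Lemma~\ref{lem:locally-noetherian}) together with Stenstr\"om's result on direct sums of injectives. From there it appeals, without writing it out, to the standard fact that the stable category of a Frobenius category with coproducts inherits coproducts when its projective-injectives are closed under coproducts: null-homotopic maps $u_i$ factoring through $P_i$ assemble into a map factoring through $\bigoplus_i P_i$.

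\textbf{Your route.} You send every stably trivial map $X_i\to Y$ through the single fixed deflation $\epsilon_Y\colon\mathbb F(Y)\to Y$. The factorizations then assemble through one projective, so no closure property of ${}_{\Q,A}\Inj$ under direct sums is needed.

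\textbf{Comparison.} Your argument uses strictly less: only that $\I$ is closed under direct sums. It works verbatim for any exact category with enough projectives that has coproducts. It also makes explicit that coproducts in $\KI$ are the module-level direct sums and agree with those in $\KQ$. That fact is exactly what the paper later uses in Theorem~\ref{thm:compact-I}, where it identifies $\Hom_{\KI}(\Sigma^{-1}\rho M,\bigoplus_i X_i)$ with $\Ext^1_{\Q,A}(M,\bigoplus_i X_i)$. The paper's version is shorter, but it relies on the unstated stable-category argument and the extra locally noetherian input.
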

\begin{proof}
Since $A$ is left noetherian, direct sums of injective left $A$-modules are injective.
It follows that $\I$ is closed under coproducts.
Note that $\Mod{\Q,A}$ is a locally noetherian category by Lemma
\ref{lem:locally-noetherian}. It follows from \cite[Proposition 4.3]{Stenstrom75} that ${}_{\Q,A}\Inj$ is closed under coproducts.
By Propositions \ref{prp:I-Frobenius}, \ref{prp:stable-I} and Lemma~\ref{lem:factor-ideal}, we have
$\Prj(\I)=\Inj(\I)={}_{\Q,A}\Inj$ and
 $\underline{\I}\simeq\KI.$ Hence $\KI$ has arbitrary coproducts.
\end{proof}

By Proposition~\ref{prp:two-cotorsion}, for every $M\in\Mod{\Q,A}$ we may
choose an exact sequence in $\Mod{\Q,A}$
\begin{equation*}
 0\longrightarrow M\xrightarrow{u_M}\rho M\longrightarrow W_M
 \longrightarrow0,
\end{equation*}
where $\rho M\in\I$ and $W_M\in\WI$.
The monomorphism $u_M$ is a special $\I$-preenvelope.

\begin{prp}\label{prp:Hom-Ext-rho}
Let $M\in\Mod{\Q,A}$ and $X\in \I$. Then there is a natural isomorphism
\begin{equation*}
 \Hom_{\KI}(\Sigma^{-1}\rho M,X)
 \cong\Ext^1_{\Q,A}(M,X).
\end{equation*}
\end{prp}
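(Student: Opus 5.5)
Apply $\Hom_{\Q,A}(-,X)$ to the defining sequence $0\to M\xrightarrow{u_M}\rho M\to W_M\to0$. Since $W_M\in\WI={}^{\perp_1}\I$ and $X\in\I$, we have $\Ext^1_{\Q,A}(W_M,X)=0$. This gives an exact sequence
\[
 \Hom_{\Q,A}(\rho M,X)\xrightarrow{u_M^*}\Hom_{\Q,A}(M,X)\longrightarrow\Ext^1_{\Q,A}(W_M,X)=0,
\]
together with
\[
 0\to\Ext^1_{\Q,A}(\rho M,X)\to\Ext^1_{\Q,A}(M,X)\to\Ext^2_{\Q,A}(W_M,X)=0,
\]
where the last vanishing uses heredity of $(\WI,\I)$ from Proposition~\ref{prp:two-cotorsion}. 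The second sequence is not needed directly, however. The cleaner route is to realize $\Sigma^{-1}\rho M$ through the canonical conflation $0\to\mathbb K(\rho M)\to\mathbb F(\rho M)\to\rho M\to0$ in $\I$, where $\mathbb F(\rho M)\in{}_{\Q,A}\Inj$ by Lemma~\ref{lem:canonical-I}.

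Using this conflation, Lemma~\ref{lem:Ext-comparison} applied to $\Sigma^{-1}\rho M\cong\mathbb K(\rho M)$ and $X$ gives
\[
 \Hom_{\KI}(\Sigma^{-1}\rho M,X)\cong\Hom_{\KI}(\rho M,\Sigma X)\cong\Ext^1_{\Q,A}(\rho M,X).
\]
So it suffices to show that $\Ext^1_{\Q,A}(\rho M,X)\cong\Ext^1_{\Q,A}(M,X)$ naturally, via $u_M^*$. Apply $\Hom_{\Q,A}(-,X)$ to $0\to M\to\rho M\to W_M\to0$; the long exact sequence contains
\[
 \Ext^1_{\Q,A}(W_M,X)\to\Ext^1_{\Q,A}(\rho M,X)\xrightarrow{u_M^*}\Ext^1_{\Q,A}(M,X)\to\Ext^2_{\Q,A}(W_M,X).
\]
Both outer terms vanish: the first because $W_M\in{}^{\perp_1}\I$, and the second because the cotorsion pair $(\WI,\I)$ is hereditary (Proposition~\ref{prp:two-cotorsion}). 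Hence $u_M^*$ is an isomorphism, and composing yields the claim.

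Naturality in $X$ is clear, since every isomorphism used comes from long exact sequences or from Lemma~\ref{lem:Ext-comparison}, and these are natural in the second variable. The only point needing care, and the expected main obstacle, is that $\rho M$ and $X$ lie in $\I$, so that Lemma~\ref{lem:Ext-comparison} applies to the pair $(\rho M,X)$ rather than $(M,X)$; $M$ itself need not lie in $\I$, which is exactly why we pass through $\rho M$. One should also check that the identification $\Hom_{\KI}(\Sigma^{-1}\rho M,X)\cong\Hom_{\KI}(\rho M,\Sigma X)$ is just the suspension automorphism of the triangulated category $\KI\simeq\underline{\I}$ (Proposition~\ref{prp:stable-I}), so no further Frobenius argument is required. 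If naturality in $M$ were also wanted, it would depend on the choice of $\rho$ and follow from the lifting property of special preenvelopes, but the statement only needs naturality in $X$.
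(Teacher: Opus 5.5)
Your proof is correct and is essentially the paper's argument. Both apply the long exact sequence to $0\to M\to\rho M\to W_M\to0$, kill $\Ext^1_{\Q,A}(W_M,X)$ and $\Ext^2_{\Q,A}(W_M,X)$ by heredity of $(\WI,\I)$, and then use Lemma~\ref{lem:Ext-comparison} for the pair $(\rho M,X)$ together with the suspension shift; the $\Hom$-level sequence and the $\mathbb K(\rho M)$ conflation you mention along the way are not needed.
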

\begin{proof}
Since $(\WI,\I)$ is a complete cotorsion pair in $\Mod{\Q,A}$, there is an exact sequence
\begin{equation}
\label{prp:Hom-Ext-rho.se1}
0\longrightarrow M\xrightarrow{u_M}\rho M\to W_M
 \to0
\end{equation}
with $\rho M\in\I$ and $W_M\in\WI.$
Applying $\Hom_{\Q,A}(-,X)$ to \eqref{prp:Hom-Ext-rho.se1}, we obtain the exact sequence
\[
 \Ext^1_{\Q,A}(W_M,X)\longrightarrow\Ext^1_{\Q,A}(\rho M,X)
 \longrightarrow\Ext^1_{\Q,A}(M,X)
 \longrightarrow\Ext^2_{\Q,A}(W_M,X).
\]
Heredity makes the outer terms vanish, giving
\[
 \Ext^1_{\Q,A}(\rho M,X)\cong\Ext^1_{\Q,A}(M,X).
\]
Since $\rho M,X\in\I$, it follows from Lemma~\ref{lem:Ext-comparison} that
\[
 \Hom_{\KI}(\Sigma^{-1}\rho M,X)
 \cong\Hom_{\KI}(\rho M,\Sigma X)
 \cong\Ext^1_{\Q,A}(\rho M,X),
\]
which proves the asserted natural isomorphism.
\end{proof}

\begin{thm}\label{thm:compact-I}
Assume Setup~\ref{setup:standing} and let $A$ be left noetherian.
Choose a set $\mathcal N$ of representatives of the isomorphism
classes of noetherian objects in $\Mod{\Q,A}$ and, for each
$M\in\mathcal N$, a special $\I$-preenvelope $M\to\rho M$.
Then $\mathcal G_{\mathcal N}
 =\{\Sigma^{-1}\rho M\mid M\in\mathcal N\}$
is a set of compact generators for $\KI$. In particular,
\begin{equation*}
 \KI=\Loc(\mathcal G_{\mathcal N})
 \qquad\text{and}\qquad
 (\KI)^c=\Thick(\mathcal G_{\mathcal N}).
\end{equation*}
\end{thm}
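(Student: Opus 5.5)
The plan is to combine Proposition~\ref{prp:Hom-Ext-rho} with Lemma~\ref{lem:locally-noetherian}. This gives compactness directly. Generation will come from a Baer-type criterion in the locally noetherian category $\Mod{\Q,A}$. The final description of $\KI$ and $(\KI)^c$ then follows from Neeman's theorem \cite[Theorem~2.1]{Neeman92}.

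First, $\mathcal N$ is a set. Each noetherian $M$ has finite support and finitely generated values by Lemma~\ref{lem:locally-noetherian}. It is also a quotient of a finite sum of $F_q(A)$'s, so its isomorphism classes are bounded by quotients of a set of objects.

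\emph{Compactness.} By Lemma~\ref{lem:coproducts-T}, coproducts in $\KI\simeq\underline{\I}$ are computed as coproducts in $\Mod{\Q,A}$ of objects of $\I$. Here we use that $\I$ and ${}_{\Q,A}\Inj$ are closed under coproducts. For a family $(X_i)$ in $\I$, Proposition~\ref{prp:Hom-Ext-rho} gives natural isomorphisms
\[
\Hom_{\KI}\Bigl(\Sigma^{-1}\rho M,\textstyle\bigoplus_i X_i\Bigr)\cong\Ext^1_{\Q,A}\Bigl(M,\textstyle\bigoplus_i X_i\Bigr)\cong\textstyle\bigoplus_i\Ext^1_{\Q,A}(M,X_i)\cong\textstyle\bigoplus_i\Hom_{\KI}(\Sigma^{-1}\rho M,X_i).
\]
The middle step is the last assertion of Lemma~\ref{lem:locally-noetherian}. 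I need to check that this chain agrees with the canonical comparison map. This holds by naturality of the isomorphism in Proposition~\ref{prp:Hom-Ext-rho} applied to the coproduct inclusions $X_i\to\bigoplus X_i$. Hence each $\Sigma^{-1}\rho M$ is compact.

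\emph{Generation.} Let $X\in\I$ with $\Hom_{\KI}(\Sigma^{-1}\rho M,\Sigma^nX)=0$ for all $M\in\mathcal N$ and $n\in\Z$. Taking $n=0$ and using Proposition~\ref{prp:Hom-Ext-rho}, we get $\Ext^1_{\Q,A}(M,X)=0$ for every noetherian $M$. In a locally noetherian Grothendieck category this forces $X$ to be injective. The argument is the generalized Baer criterion: test extensions along inclusions $K\hookrightarrow P$ of subobjects of noetherian generators $P=F_q(A)$. Both $K$ and $P/K$ are noetherian, and vanishing of $\Ext^1(P/K,X)$ shows that every map $K\to X$ extends to $P$. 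This gives injectivity by \cite[Chapter~V, Proposition~2.9 and \S4]{Stenstrom75}. Thus $X\in{}_{\Q,A}\Inj$, so $X\cong0$ in $\KI$ by Proposition~\ref{prp:I-Frobenius}. This already uses only $n=0$, which is consistent with the convention allowing shifts.

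Finally, $\KI$ is compactly generated by the set $\mathcal G_{\mathcal N}$. Neeman's theorem \cite[Theorem~2.1]{Neeman92}, in its standard form for triangulated categories with coproducts, then yields $\KI=\Loc(\mathcal G_{\mathcal N})$ and $(\KI)^c=\Thick(\mathcal G_{\mathcal N})$. The main obstacle I expect is the careful justification that coproducts in $\KI$ are the module-theoretic ones, so that the Ext comparison is legitimate. The Baer-type injectivity test in $\Mod{\Q,A}$ also needs care.
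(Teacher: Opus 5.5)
Your proposal is correct and follows the paper's proof. Compactness comes from Proposition~\ref{prp:Hom-Ext-rho} together with the coproduct-preservation of $\Ext^1_{\Q,A}(M,-)$ in Lemma~\ref{lem:locally-noetherian}. Generation comes from the fact that $\Ext^1_{\Q,A}(M,X)=0$ for all noetherian $M$ forces $X$ to be injective, and the closure statements come from Neeman's theorem. The only differences are cosmetic: you unpack the injectivity test as a Baer-type criterion over the noetherian generators $F_q(A)$, where the paper simply cites \cite[Lemma~2.5]{Krause98}, and you cite \cite[Theorem~2.1]{Neeman92} instead of \cite[Theorem~2.1]{Neeman96} for the last step.
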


\begin{proof}
Since $(\WI,\I)$ is a complete hereditary cotorsion pair in $\Mod{\Q,A}$ by Proposition~\ref{prp:two-cotorsion},
there is an exact sequence in $\Mod{\Q,A}$
\begin{equation*}
 0\longrightarrow M\xrightarrow{u_M}\rho M\longrightarrow W_M
 \longrightarrow0,
\end{equation*}
with $\rho M\in\I$ and $W_M\in\WI$ for any $M\in \Mod{\Q,A}$.
By Proposition \ref{prp:Hom-Ext-rho}, one has a natural isomorphism
\[
 \Hom_{\KI}(\Sigma^{-1}\rho M,X)
 \cong\Ext^1_{\Q,A}(M,X).
\]
Thus, for $M\in\mathcal N$ and any family
$\{X_i\}_{i\in I}$ in $\I$, we have
\[
\begin{aligned}
 \Hom_{\KI}\left(\Sigma^{-1}\rho M,
                  \bigoplus_{i\in I} X_i\right)
 &\cong\Ext^1_{\Q,A}\left(M,\bigoplus_{i\in I} X_i\right)\\
 &\cong\bigoplus_{i\in I}\Ext^1_{\Q,A}(M,X_i)\\
 &\cong\bigoplus_{i\in I}
       \Hom_{\KI}(\Sigma^{-1}\rho M,X_i).
\end{aligned}
\]
Hence every object of $\mathcal G_{\mathcal N}$ is compact.

Suppose that $X\in\I$ satisfies
$\Hom_{\KI}(G,X)=0$ for every $G\in\mathcal G_{\mathcal N}$.
The preceding isomorphisms imply that
$\Ext^1_{\Q,A}(M,X)=0$ for every noetherian object
$M\in\Mod{\Q,A}$.
It follows from \cite[Lemma 2.5]{Krause98} that $X$ is
injective in $\Mod{\Q,A}$, and hence $X\simeq0$ in $\KI$.
Thus $\mathcal G_{\mathcal N}$ is a set of compact generators
for $\KI$.
The asserted descriptions of the localizing and thick closures
follow from \cite[Theorem 2.1]{Neeman96}.
\end{proof}

\subsection{Stalk modules and compact generators}

Since the pseudo-radical is nilpotent, every noetherian
$\Q$-shaped module admits a finite filtration whose factors
are finite direct sums of stalk modules $S_q(N)$, with
$N$ a finitely generated left $A$-module.
We use these filtrations to replace the compact generators
in Theorem~\ref{thm:compact-I} by
\[
 \{\Sigma^{-1}\rho S_q(N)\mid
   q\in\Q,\ N\in\mathcal M_A\},
\]
where $\mathcal M_A$ is a set of representatives of the
isomorphism classes of finitely generated left $A$-modules.
We first construct compact generators for $\KI$ and then
use the recollement to obtain compact generators for $\KAI$. The image of
$\DQ^{c}$ is treated in Subsection 4.3.

\begin{lem}\label{lem:radical-filtration}
Every noetherian object $N\in\Mod{\Q,A}$ has a finite filtration whose factors
are finite direct sums of stalks $S_q(M)$ with $M$ finitely generated
over $A$.
\end{lem}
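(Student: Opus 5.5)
The plan is to use the pseudo-radical filtration $N\supseteq \rQ N\supseteq \rQ^2N\supseteq\cdots\supseteq\rQ^dN=0$. Here $\rQ^iN$ denotes the subobject of $N$ generated by the images of all morphisms in $\rQ^i$; that is, $(\rQ^iN)(p)$ is the sum of the images $N(\alpha)(N(q))$ over $\alpha\in\rQ^i(q,p)$. Since $\rQ$ is an ideal, each $\rQ^iN$ is a $\Q$-submodule of $N$. Nilpotence $\rQ^d=0$ makes the filtration finite, of length at most $d$. By Lemma~\ref{lem:locally-noetherian}, $N$ is noetherian, so every subquotient $\rQ^iN/\rQ^{i+1}N$ is again noetherian. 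Each subquotient therefore has finite support and finitely generated values over $A$.

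The key step is to show that any $\Q$-shaped module $T$ with $\rQ T=0$ is isomorphic to $\bigoplus_{q\in\supp T}S_q(T(q))$. Since $\rQ(p,q)=\Q(p,q)$ for $p\ne q$, every non-identity-type morphism acts by zero on $T$. The same holds for every element of $\mathfrak r_q$. Hence the action of $\Q(q,q)=\kk\cdot\mathrm{id}_q\oplus\mathfrak r_q$ on $T(q)$ factors through $\kk$. So $T$ is just a family of $A$-modules $(T(q))_q$ with all structure maps through $\rQ$ equal to zero.

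On the other hand, $S_q(M)=S\langle q\rangle\otimes_\kk M$, and $S\langle q\rangle=\Q(q,-)/\rQ(q,-)$ is $\kk\cdot\mathrm{id}_q$ concentrated at $q$, which is free of rank one by the strong retraction property. Thus $S_q(M)$ is $M$ placed at $q$ with zero action of $\rQ$. Comparing the two descriptions gives $T\cong\bigoplus_q S_q(T(q))$. One can phrase this more canonically: the counit $\bigoplus_q S_qC_q(T)\to T$ is an isomorphism because $C_q(T)=T(q)/(\rQ T)(q)=T(q)$. The sum is finite, since $\supp T$ is finite, and each $T(q)$ is finitely generated over $A$.

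Applying this to $T=\rQ^iN/\rQ^{i+1}N$, for which $\rQ T=0$ by construction, yields the claimed finite filtration with factors finite direct sums of stalks $S_q(M)$, $M\in{}_A\modu$.

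The main obstacle is the identification of modules annihilated by $\rQ$ with direct sums of stalks. This needs the decomposition $\Q(q,q)=\kk\,\mathrm{id}_q\oplus\mathfrak r_q$ to be compatible with the $A$-module structure. It also needs $S\langle q\rangle(p)=0$ for $p\neq q$, which holds because $\rQ(q,p)=\Q(q,p)$ for $p\neq q$. If the direct identification becomes cumbersome, I would instead appeal to \cite[Section~7]{HJ22} for the description of $S_q$ and of modules killed by $\rQ$.
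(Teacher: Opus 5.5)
Your proposal is correct and follows essentially the same route as the paper. Both use the finite filtration $N\supseteq\rQ N\supseteq\cdots\supseteq\rQ^dN=0$ and observe that each factor is killed by $\rQ$, hence is a module over $\Q/\rQ$, which by the strong retraction property is discrete with endomorphism ring $\kk$ at each object; both then apply Lemma~\ref{lem:locally-noetherian} for finite support and finitely generated values.

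There is one small slip in your optional ``canonical'' rephrasing. The adjunction $(C_q,S_q)$ gives a unit $T\to S_qC_q(T)$, not a map $\bigoplus_q S_qC_q(T)\to T$; that direction would come from $(S_q,K_q)$. The assembled map $T\to\bigoplus_{q}S_qC_q(T)$ is still an isomorphism when $\rQ T=0$, since at each $p$ it is the canonical projection $T(p)\to T(p)/(\rQ T)(p)$.
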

\begin{proof}
Define the subobject $\rQ N$ of $N$ by
\[
 (\rQ N)(q)=\sum_{p\in\Q,\,\alpha\in\rQ(p,q)}\Im N(\alpha).
\]
For every morphism $\beta:q\to t$ and every
$\alpha\in\mathfrak r(p,q)$, $\beta\alpha\in\mathfrak r(p,t)$. Hence
\[
 N(\beta)\bigl(\operatorname{Im}N(\alpha)\bigr)
 =\operatorname{Im}N(\beta\alpha)
 \subseteq(\mathfrak rN)(t),
\]
and so
\[
 N(\beta)\bigl((\mathfrak rN)(q)\bigr)
 \subseteq(\mathfrak rN)(t).
\]
It follows that $\mathfrak rN$ is a subobject of $N$.
Iterating and using $\rQ^d=0$ gives the finite filtration
\[
 N\supseteq\rQ N\supseteq\cdots\supseteq\rQ^dN=0.
\]
Each quotient $N_j=\mathfrak r^jN/\mathfrak r^{j+1}N$
is noetherian. For every $\alpha\in\mathfrak r(p,q)$,
\[
 N(\alpha)\bigl((\mathfrak r^jN)(p)\bigr)
 \subseteq(\mathfrak r^{j+1}N)(q),
\]
so $N_j(\alpha)=0$. Thus $N_j$ factors through
$\Q/\mathfrak r$. By the strong retraction property,
\[
 (\Q/\mathfrak r)(p,q)\cong
 \begin{cases}
 \kk,&p=q,\\
 0,&p\ne q.
 \end{cases}
\]
Consequently, the componentwise identity maps give an
isomorphism
\[
 N_j\cong
 \bigoplus_{q\in\supp N_j}S_q(N_j(q)).
\]
By Lemma~\ref{lem:locally-noetherian}, the support of $N_j$
is finite and each $N_j(q)$ is finitely generated over $A$.
Hence every filtration factor has the required form.
\end{proof}

\begin{thm}\label{thm:stalk-generators}
Assume Setup~\ref{setup:standing} and let $A$ be left noetherian.
Choose a set $\mathcal M_A$ of representatives of the isomorphism
classes of finitely generated left $A$-modules. For each
$q\in\Q$ and $M\in\mathcal M_A$, choose a special
$\I$-preenvelope $S_q(M)\to\rho S_q(M)$.
Then
\begin{equation*}
 \mathcal G_I
 =\{\rho S_q(M)\mid q\in\Q,\ M\in\mathcal M_A\}
\end{equation*}
is a set of compact generators for $\KI$. In particular,
\[
 \KI=\Loc(\mathcal G_I)
 \qquad\text{and}\qquad
 (\KI)^c=\Thick(\mathcal G_I).
\]
\end{thm}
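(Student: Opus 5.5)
The plan is to compare $\mathcal G_I$ with the generating set $\mathcal G_{\mathcal N}$ of Theorem~\ref{thm:compact-I}, using Proposition~\ref{prp:Hom-Ext-rho} and the filtrations of Lemma~\ref{lem:radical-filtration}.

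\emph{Compactness.} Each stalk $S_q(M)$ with $M\in\mathcal M_A$ has support $\{q\}$ and value $M$, which is finitely generated. By Lemma~\ref{lem:locally-noetherian}\textup{(iv)} it is noetherian. Proposition~\ref{prp:Hom-Ext-rho} gives
\[
 \Hom_{\KI}(\Sigma^{-1}\rho S_q(M),X)\cong\Ext^1_{\Q,A}(S_q(M),X)
\]
for $X\in\I$. By Lemma~\ref{lem:locally-noetherian}, this functor commutes with coproducts, and coproducts in $\KI$ are computed in $\I$ by Lemma~\ref{lem:coproducts-T}. Hence $\Sigma^{-1}\rho S_q(M)$ is compact, and so is its shift $\rho S_q(M)$.

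\emph{Generation.} Suppose $X\in\I$ satisfies $\Hom_{\KI}(\rho S_q(M),\Sigma^nX)=0$ for all $n$, all $q$ and all $M\in\mathcal M_A$. Taking the appropriate shift gives $\Ext^1_{\Q,A}(S_q(M),X)=0$ for all $q$ and all finitely generated $M$; the same holds for finite direct sums. Now let $N$ be noetherian. By Lemma~\ref{lem:radical-filtration}, $N$ has a finite filtration whose factors $N_j$ are finite sums of such stalks. Argue by induction along the short exact sequences
\[
 0\to\rQ^{j+1}N\to\rQ^jN\to N_j\to0 .
\]
The long exact sequence
\[
 \Ext^1(N_j,X)\to\Ext^1(\rQ^jN,X)\to\Ext^1(\rQ^{j+1}N,X)
\]
has vanishing outer terms by the inductive hypothesis, starting from $\rQ^dN=0$. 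Therefore $\Ext^1_{\Q,A}(N,X)=0$. Here Ext is taken in $\Mod{\Q,A}$, where these sequences are exact even though they need not be objectwise split, and the isomorphism in Proposition~\ref{prp:Hom-Ext-rho} holds for every $M\in\Mod{\Q,A}$, so no objectwise splitting is needed. Then \cite[Lemma 2.5]{Krause98}, exactly as in Theorem~\ref{thm:compact-I}, shows that $X$ is injective, so $X\cong0$ in $\KI$.

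\emph{Conclusion.} The equalities $\KI=\Loc(\mathcal G_I)$ and $(\KI)^c=\Thick(\mathcal G_I)$ then follow from \cite[Theorem 2.1]{Neeman96}, using that $\mathcal G_I$ is closed under no shifts but the generation condition allows all shifts.

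The main technical point, though routine, is the bookkeeping of shifts together with the dévissage. One could alternatively show that each $\Sigma^{-1}\rho N$ lies in $\Thick(\mathcal G_I)$ via the triangles induced by the filtration, since $\rho$ sends short exact sequences to triangles up to contractibles. The Ext-vanishing argument above avoids having to verify that functoriality and is the route I would take.
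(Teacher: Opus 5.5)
Your proposal is correct and follows essentially the same route as the paper. You prove compactness directly from $\Hom_{\KI}(\Sigma^{-1}\rho S_q(M),-)\cong\Ext^1_{\Q,A}(S_q(M),-)$ for noetherian stalks, where the paper just cites Theorem~\ref{thm:compact-I}, which is that same computation; the generation step (dévissage along the filtration of Lemma~\ref{lem:radical-filtration} to get $\Ext^1_{\Q,A}(N,X)=0$ for all noetherian $N$, then \cite[Lemma~2.5]{Krause98} and \cite[Theorem~2.1]{Neeman96}) matches the paper's proof.
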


\begin{proof}
For each $q\in\Q$ and $M\in\mathcal M_A$, the stalk module
$S_q(M)$ has finite support and finitely generated values.
It is therefore a noetherian object of $\Mod{\Q,A}$ by
Lemma~\ref{lem:locally-noetherian}.
By Theorem~\ref{thm:compact-I}, $\Sigma^{-1}\rho S_q(M)$ is compact.
Since suspension preserves compact objects, every object
of $\mathcal G_I$ is compact.

To prove that $\mathcal G_I$ generates $\KI$, let $X\in\I$
satisfy $$\Hom_{\KI}(\rho S_q(M),\Sigma^nX)=0$$
 for all $q\in\Q,\ M\in\mathcal M_A,\ n\in\mathbb Z.$
It follows from Proposition~\ref{prp:Hom-Ext-rho} that
\[
\begin{aligned}
 \Ext^1_{\Q,A}(S_q(M),X)
 &\cong\Hom_{\KI}(\Sigma^{-1}\rho S_q(M),X)\\
 &\cong\Hom_{\KI}(\rho S_q(M),\Sigma X)=0.
\end{aligned}
\]
Now let $N$ be an arbitrary noetherian object of
$\Mod{\Q,A}$. By Lemma~\ref{lem:radical-filtration},
there is a finite filtration
\[
 N=N^0\supseteq N^1\supseteq\cdots\supseteq N^d=0
\]
whose factors are isomorphic to finite direct sums of
modules $S_q(M)$ with $q\in\Q$ and $M\in\mathcal M_A$.
Consequently,
\[
 \Ext^1_{\Q,A}(N^j/N^{j+1},X)=0
 \qquad\text{for }0\leq j<d.
\]
Applying $\Hom_{\Q,A}(-,X)$ to
\[
 0\longrightarrow N^{j+1}\longrightarrow N^j
 \longrightarrow N^j/N^{j+1}\longrightarrow0
\]
therefore yields an exact sequence
\[
 0\longrightarrow\Ext^1_{\Q,A}(N^j,X)
 \longrightarrow\Ext^1_{\Q,A}(N^{j+1},X).
\]
Starting with $N^d=0$, descending induction gives
$\Ext^1_{\Q,A}(N,X)=0$.
Since this category is locally noetherian, it follows from
\cite[Lemma~2.5]{Krause98} that $X$ is injective.
Hence $X\cong0$ in $\KI$, proving that $\mathcal G_I$
is a set of compact generators.
The descriptions of its localizing and thick closures
follow from \cite[Theorem~2.1]{Neeman96}.
\end{proof}

The left adjoint $i^{*}$ carries the compact generators of $\KI$ to compact gen-
erators of $\KAI$.
\begin{thm}\label{thm:compact-acyclic}
Assume Setup~\ref{setup:standing} and suppose that $A$ is left
noetherian. Then $\KAI$ is compactly generated. With $\mathcal G_I$
as in Theorem~\ref{thm:stalk-generators}, one has
\begin{align*}
 \KAI=\Loc_{\KAI}(i^*\mathcal G_I)\quad \text{and}\quad
 (\KAI)^c=\Thick_{\KAI}(i^*\mathcal G_I).
\end{align*}
\end{thm}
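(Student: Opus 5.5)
The plan is to use the standard fact that the left adjoint of a coproduct-preserving functor between triangulated categories with coproducts preserves compact objects. We then check that $i^*\mathcal G_I$ detects zero objects in $\KAI$. Compact generation, together with \cite[Theorem~2.1]{Neeman96}, then gives both displayed equalities.

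The first step is to show that $\KAI$ has coproducts and that $i_*$ preserves them. Since $A$ is left noetherian, $\I$ is closed under coproducts (Lemma~\ref{lem:coproducts-T}). The class $\EE$ is closed under coproducts because $\EE=\{S_q(A)\mid q\in\Q\}^{\perp_1}$ and each $S_q(A)$ is noetherian, so Lemma~\ref{lem:locally-noetherian} shows that $\Ext^1_{\Q,A}(S_q(A),-)$ commutes with coproducts. Hence $\EI$ is closed under coproducts in $\Mod{\Q,A}$. Coproducts in $\KI$ are computed by coproducts in $\I$, as in the proof of Lemma~\ref{lem:coproducts-T}. So the coproduct in $\KI$ of a family in $\KAI$ lies in $\KAI$ and serves as its coproduct there; in other words, $i_*$ preserves coproducts. (Alternatively, $\KAI=\Ker Q_I$ and $Q_I$ is a left adjoint, hence preserves coproducts.)

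The second step is compactness. For $C\in(\KI)^c$ and a family $(Z_i)$ in $\KAI$,
\[
\Hom_{\KAI}\Bigl(i^*C,\coprod Z_i\Bigr)\cong\Hom_{\KI}\Bigl(C,\coprod i_*Z_i\Bigr)\cong\bigoplus\Hom_{\KI}(C,i_*Z_i)\cong\bigoplus\Hom_{\KAI}(i^*C,Z_i).
\]
The first isomorphism uses the adjunction together with the first step, the second uses compactness of $C$, and the third is the adjunction again. So $i^*$ carries $\mathcal G_I$ into $(\KAI)^c$.

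The third step is generation. Suppose $Z\in\KAI$ satisfies $\Hom_{\KAI}(i^*G,\Sigma^nZ)=0$ for all $G\in\mathcal G_I$ and $n\in\Z$. Since $i^*$ and $i_*$ are triangle functors, adjunction gives $\Hom_{\KI}(G,\Sigma^n i_*Z)=0$ for all such $G$ and $n$. Theorem~\ref{thm:stalk-generators} then yields $i_*Z\cong0$, and full faithfulness of $i_*$ gives $Z\cong0$. Hence $i^*\mathcal G_I$ is a set of compact generators, and \cite[Theorem~2.1]{Neeman96} applied in $\KAI$ gives $\KAI=\Loc_{\KAI}(i^*\mathcal G_I)$ and $(\KAI)^c=\Thick_{\KAI}(i^*\mathcal G_I)$.

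The only real obstacle is the first step: one must make sure that $\KAI$ genuinely has coproducts computed in $\KI$, so that the adjunction argument for compactness is valid. This is handled by the closure of $\EI$ under coproducts.
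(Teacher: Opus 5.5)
Your proposal is correct and follows the paper's proof essentially step by step. Both arguments show that $\KAI$ is closed under coproducts in $\KI$; the paper uses exactly your parenthetical argument that $\KAI=\Ker Q_I$ with $Q_I$ coproduct-preserving, though your primary route via $\EE=\{S_q(A)\mid q\in\Q\}^{\perp_1}$ and the noetherianity of $S_q(A)$ is also valid. Both then derive compactness of $i^*G$ from the $(i^*,i_*)$ adjunction (you spell out the Hom computation that the paper compresses into ``it follows''), detect zero objects through $(i^*,i_*)$ and the full faithfulness of $i_*$, and conclude with \cite[Theorem~2.1]{Neeman96}.
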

\begin{proof}
Since $Q_I$ preserves coproducts, its kernel $\KAI$ is closed
under coproducts in $\KI$. Thus coproducts in $\KAI$ are
computed in $\KI$, and the inclusion $i_*$ preserves them.
Since $\mathcal G_I$ is a compact
generating set for $\KI$ by Theorem~\ref{thm:stalk-generators}, it follows that $i^*G$ is compact in $\KAI$ for each $G\in \mathcal G_I$.
To see that $i^*\mathcal G_I$ generates $\KAI$, let $Z\in\KAI$ satisfy
$\Hom_{\KAI}(i^*G,\Sigma^nZ)=0$ for all $G\in\mathcal G_I$
and $n\in\Z$. Since $(i^{*},i_{*})$ is an adjoint pair,
\[
 \Hom_{\KI}(G,\Sigma^ni_*Z)
 \cong\Hom_{\KAI}(i^*G,\Sigma^nZ)=0.
\]
Since $\mathcal G_I$ generates $\KI$, we have $i_*Z\cong0$.
Thus $Z\cong0$ as $i_{*}$ is fully faithful. This implies that $i^*\mathcal G_I$ generates $\KAI$.

The last assertion is now an immediate consequence of \cite[Theorem 2.1]{Neeman96}.
\end{proof}

Since $(\UU,\EI)$ is a complete cotorsion pair in $\Mod{\Q,A}$ by Proposition~\ref{prp:two-cotorsion}, for any $M\in \Mod{\Q,A}$, there is an exact sequence
$$0\longrightarrow M\longrightarrow\sigma M
 \longrightarrow U_M\longrightarrow0$$
 with $\sigma M\in\EI$ and $U_M\in\UU.$
Let $\mathcal M_A$ be a set of representatives of the isomorphism
classes of finitely generated left $A$-modules. We will show that
the compact generators of $\KAI$ can be represented by special
$\EI$-preenvelopes of $S_{q}(M)$ with $M\in\mathcal M_A$.

\begin{prp}\label{prp:sigma-preenvelope}
For $M\in\Mod{\Q,A}$, choose a special $\EI$-preenvelope sequence
\[
 0\longrightarrow M\longrightarrow\sigma M
 \longrightarrow U_M\longrightarrow0
\]
with $\sigma M\in\EI$ and $ U_M\in\UU$.
For $Z\in\EI$, there is an isomorphism, natural in $Z$,
\begin{equation*}
 \Hom_{\KAI}(\Sigma^{-1}\sigma M,Z)
 \cong\Ext^1_{\Q,A}(M,Z).
\end{equation*}
Moreover,
$\sigma M\cong i^*\rho M$ in $\KAI$.
Consequently, $\{\sigma S_q(M)\mid q\in\Q,\ M\in\mathcal M_A\}$
is a set of compact generators for $\KAI$.
\end{prp}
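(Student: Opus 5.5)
The plan is to treat the three assertions in order: the Hom--Ext isomorphism, the identification $\sigma M\cong i^*\rho M$, and the generation statement.

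\textbf{Step 1: the Hom--Ext isomorphism.} This mirrors Proposition~\ref{prp:Hom-Ext-rho}. Apply $\Hom_{\Q,A}(-,Z)$ to the sequence $0\to M\to\sigma M\to U_M\to0$. Since $(\UU,\EI)$ is hereditary by Proposition~\ref{prp:two-cotorsion}, both $\Ext^1_{\Q,A}(U_M,Z)$ and $\Ext^2_{\Q,A}(U_M,Z)$ vanish. This gives $\Ext^1_{\Q,A}(\sigma M,Z)\cong\Ext^1_{\Q,A}(M,Z)$, natural in $Z$. Since $\sigma M,Z\in\I$, Lemma~\ref{lem:Ext-comparison} identifies the left side with $\Hom_{\KI}(\sigma M,\Sigma Z)\cong\Hom_{\KI}(\Sigma^{-1}\sigma M,Z)$. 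This equals $\Hom_{\KAI}(\Sigma^{-1}\sigma M,Z)$ because $\KAI$ is a full triangulated subcategory.

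\textbf{Step 2: $\sigma M\cong i^*\rho M$.} The plan is to compare corepresented functors on $\KAI$. For $Z\in\EI$, adjunction, Proposition~\ref{prp:Hom-Ext-rho} and Step 1 give natural isomorphisms
\[
\Hom_{\KAI}(i^*\Sigma^{-1}\rho M,Z)\cong\Hom_{\KI}(\Sigma^{-1}\rho M,i_*Z)\cong\Ext^1_{\Q,A}(M,Z)\cong\Hom_{\KAI}(\Sigma^{-1}\sigma M,Z).
\]
Yoneda then yields $i^*\Sigma^{-1}\rho M\cong\Sigma^{-1}\sigma M$, and applying $\Sigma$ gives the claim.

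The delicate point, and the main obstacle, is ensuring that all of these isomorphisms are natural in $Z$ as a functor on $\KAI$, not merely on $\EI$. Naturality with respect to morphisms in $\EI$ suffices, since morphisms in $\KAI$ are classes of such morphisms. The Ext groups are invariant because morphisms factoring through injectives induce zero on $\Ext^1_{\Q,A}(M,-)$.

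An alternative, more concrete route also works. Since $\EI\subseteq\I$, the identity on $M$ extends to a map $\rho M\to\sigma M$: the obstruction lies in $\Ext^1(W_M,\sigma M)=0$. Its cone $C$ is then shown to lie in ${}^\perp\KAI$, i.e.\ $C$ is isomorphic to an object of $\FF$. The Yoneda argument is cleaner, so I would use it.

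\textbf{Step 3: generation.} By Theorem~\ref{thm:compact-acyclic}, $i^*\mathcal G_I$ is a set of compact generators for $\KAI$. Step 2 gives $i^*\rho S_q(M)\cong\sigma S_q(M)$, so the set $\{\sigma S_q(M)\}$ coincides up to isomorphism with $i^*\mathcal G_I$, and hence is a set of compact generators for $\KAI$.

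Alternatively, one can argue directly. Compactness follows from Step 1 and Lemma~\ref{lem:locally-noetherian}, since $S_q(M)$ is noetherian and coproducts in $\KAI$ are computed in $\KI$. Generation follows because vanishing of $\Ext^1(S_q(M),Z)$ for all stalks forces $\Ext^1(N,Z)=0$ for every noetherian $N$ via Lemma~\ref{lem:radical-filtration}, so $Z$ is injective by \cite[Lemma~2.5]{Krause98}, i.e.\ $Z\cong0$.
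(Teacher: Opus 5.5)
Your proposal is correct and follows the paper's proof essentially step for step. The three steps match: the heredity-based isomorphism $\Ext^1_{\Q,A}(\sigma M,Z)\cong\Ext^1_{\Q,A}(M,Z)$ combined with Lemma~\ref{lem:Ext-comparison}, the Yoneda comparison of $i^*\rho M$ and $\sigma M$ via the $(i^*,i_*)$-adjunction and Proposition~\ref{prp:Hom-Ext-rho}, and the transfer of generators from Theorem~\ref{thm:compact-acyclic}. Your additional remark fills in a point the paper leaves implicit: naturality on $\EI$ descends to $\KAI$ because morphisms factoring through objects of ${}_{\Q,A}\Inj$ induce zero on $\Ext^1_{\Q,A}(M,-)$.
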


\begin{proof}
Since $(\UU,\EI)$ is a complete cotorsion pair in $\Mod{\Q,A}$ by Proposition~\ref{prp:two-cotorsion}, there is an exact sequence
$$0\longrightarrow M\longrightarrow\sigma M
 \longrightarrow U_M\longrightarrow0$$
 with $\sigma M\in\EI$ and $U_M\in\UU.$
For $Z\in\EI$,  one has $
 \Ext^1_{\Q,A}(U_M,Z)=0$ and $\Ext^2_{\Q,A}(U_M,Z)=0$
since $(\UU,\EI)$ is hereditary.
Hence we get a natural isomorphism
\[
 \Ext^1_{\Q,A}(\sigma M,Z)
 \xrightarrow{\sim}\Ext^1_{\Q,A}(M,Z).
\]
Since $\sigma M$ and $Z$ are in $\EI\subseteq\I$ and $\KAI$ is a
triangulated subcategory of $\KI$, it follows from
Lemma~\ref{lem:Ext-comparison} that
\[
 \Hom_{\KAI}(\Sigma^{-1}\sigma M,Z)
 \cong\Hom_{\KI}(\sigma M,\Sigma Z)
 \cong\Ext^1_{\Q,A}(\sigma M,Z).
\]
Hence we obtain that $\Hom_{\KAI}(\Sigma^{-1}\sigma M,Z)
 \cong\Ext^1_{\Q,A}(M,Z).$

Let $Z\in\EI$. Since $(i^*,i_*)$ is an adjoint pair, it follows from Proposition~\ref{prp:Hom-Ext-rho} that
\[
\begin{aligned}
 \Hom_{\KAI}(\Sigma^{-1}i^*\rho M,Z)
 &\cong\Hom_{\KI}(\Sigma^{-1}\rho M,i_*Z)\\
 &\cong\Ext^1_{\Q,A}(M,Z)\\
 &\cong\Hom_{\KAI}(\Sigma^{-1}\sigma M,Z).
\end{aligned}
\]
These isomorphisms are natural in $Z\in \KAI$. So by
Yoneda's lemma we have
$\Sigma^{-1}i^*\rho M\cong\Sigma^{-1}\sigma M$,
and hence $i^*\rho M\cong\sigma M$.
By Theorem~\ref{thm:compact-acyclic}, the objects
$i^*\rho S_q(M)$ form a set of compact generators for $\KAI$.
Replacing them by the isomorphic objects $\sigma S_q(M)$
proves the final assertion.
\end{proof}

\subsection{Derived compact objects and the quotient}
To describe the compact objects of $\KAI$, we first identify the essential image of $\DQ^{c}$ under $L_{I}$. Neeman's localization theorem then gives the quotient description.
\begin{prp}\label{prp:rho-L}
For each $q\in\Q$, the special $\I$-preenvelope of $S_q(A)$
satisfies
\[
 \rho S_q(A)\cong L_IS_q(A)
 \qquad\text{in }\KI.
\]
Consequently, the essential image
$\mathcal P_{\Q}(A):=L_I((\DQ)^c)$ is given by
\begin{equation*}
 \mathcal P_{\Q}(A)
 =\Thick_{\KI}\{\rho S_q(A)\mid q\in\Q\}
 \subseteq(\KI)^c.
\end{equation*}
\end{prp}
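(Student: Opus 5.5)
The plan is to show that $\rho S_q(A)$ lies in ${}^\perp\KAI$ and that $Q_I(\rho S_q(A))\cong S_q(A)$ in $\DQ$. Then $\rho S_q(A)\cong L_IQ_I\rho S_q(A)\cong L_IS_q(A)$, because $L_I$ is a quasi-inverse of the equivalence $Q_I|_{{}^\perp\KAI}$ of Theorem~\ref{thm:left-equivalence}.

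\emph{Step 1: orthogonality.} Let $Z\in\EI$ and $n\in\Z$. Since $\KAI$ is closed under $\Sigma$, we have $\Sigma^{-n-1}Z\in\KAI$, and this object is represented in $\EI$ by iterating $\mathbb K$ or $\mathbb C$ (Proposition~\ref{prp:EI-Frobenius}). Proposition~\ref{prp:Hom-Ext-rho} then gives
\[
\Hom_{\KI}(\rho S_q(A),\Sigma^nZ)\cong\Hom_{\KI}(\Sigma^{-1}\rho S_q(A),\Sigma^{n-1}\cdots)\cong\Ext^1_{\Q,A}(S_q(A),Z')
\]
for some $Z'\in\EI$ representing a shift of $Z$. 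This group vanishes because $Z'\in\EE=\{S_p(A)\}^{\perp_1}$, by the characterization of exact objects recalled before Proposition~\ref{prp:two-cotorsion}. Hence $\rho S_q(A)\in{}^\perp\KAI$.

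\emph{Step 2: image in $\DQ$.} The sequence $0\to S_q(A)\to\rho S_q(A)\to W\to0$ has $W\in\WI\subseteq\EE$ by Proposition~\ref{prp:two-cotorsion}. Its cokernel is therefore trivial in both model structures, so $u:S_q(A)\to\rho S_q(A)$ is a weak equivalence: a monomorphism with exact cokernel is a trivial cofibration in the injective model structure $(\Mod{\Q,A},\EE,\EE^{\perp_1})$. This yields $I_*(\rho S_q(A))\cong S_q(A)$ in $\DQ$, i.e.\ $Q_I\rho S_q(A)\cong S_q(A)$. The point I expect to need the most care is justifying that the localization $\Mod{\Q,A}\to\DQ$ factors through $\KQ$ as $I_*$, so that the model-theoretic weak equivalence really becomes an isomorphism under $Q_I$. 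This is standard for the Slaftsos--Vit\'oria localization (contractibles are exact), and I will cite \cite[Theorem~6.11]{SV26}.

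\emph{Step 3: the thick closure.} By \cite[Theorem~D]{HJ24}, $(\DQ)^c=\Thick_{\DQ}\{S_q(A)\}$. Since $L_I$ is a fully faithful triangle functor, it identifies $\Thick_{\DQ}\{S_q(A)\}$ with $\Thick$ of the images inside $\Im L_I={}^\perp\KAI$. The latter is thick in $\KI$, being a left orthogonal, so this equals $\Thick_{\KI}\{\rho S_q(A)\}$. Finally, each $\rho S_q(A)$ is compact by Theorem~\ref{thm:stalk-generators}, which gives the inclusion in $(\KI)^c$.
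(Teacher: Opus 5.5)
Your proposal is correct and follows the paper's proof. Orthogonality comes from Proposition~\ref{prp:Hom-Ext-rho}, $S_q(A)\in{}^{\perp_1}\EE$ and closure of $\KAI$ under shifts; $Q_I\rho S_q(A)\cong S_q(A)$ comes from $\WI\subseteq\EE$; and the conclusion follows from the equivalence of Theorem~\ref{thm:left-equivalence} together with \cite[Theorem~D]{HJ24}. (One harmless slip: you need $\Sigma^{n-1}Z$, not $\Sigma^{-n-1}Z$, to be represented in $\EI$.)

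The only variation is in Step~3. You get thickness from ${}^\perp\KAI$ being thick in $\KI$, and compactness directly from Theorem~\ref{thm:stalk-generators}. The paper instead uses idempotent completeness of $(\DQ)^c$, and the fact that $L_I$ preserves compacts because $Q_I$ preserves coproducts. Both routes are valid.
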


\begin{proof}
Since $(\WI,\I)$ is a complete hereditary cotorsion pair by Proposition~\ref{prp:two-cotorsion}, there is an exact sequence in $\Mod{\Q,A}$
\begin{equation*}
 0\longrightarrow S_q(A)\xrightarrow{}\rho S_q(A)\longrightarrow W_{S_q(A)}
 \longrightarrow0,
\end{equation*}
with $\rho S_q(A)\in\I$ and $W_{S_q(A)}\in \WI$.
Since $S_q(A)\in{}^{\perp_1}\EE$ by
\cite[Proposition~3.2(a)]{HJ24} and $\EI\subseteq\EE$,
it follows from Proposition \ref{prp:Hom-Ext-rho} that
\[
 \Hom_{\KI}(\rho S_q(A),\Sigma Z)
 \cong\Ext^1_{\Q,A}(S_q(A),Z)=0
\]
for every $Z\in \EI.$
Note that $\KAI$ is closed under shifts.
Hence $\rho S_q(A)\in{}^\perp\KAI$.
By \cite[Proposition~3.2(a)]{HJ24}, we have $\EE^{\bot_{1}}\subseteq\I$. Hence $\WI\subseteq \EE$, and so $Q_I\rho S_q(A)\cong S_q(A)$ in $\DQ$.
By Theorem~\ref{thm:left-equivalence}, the restriction
\[
 Q_I|_{{}^\perp\KAI}:{}^\perp\KAI\xrightarrow{\sim}\DQ
\]
is a triangle equivalence with quasi-inverse $L_I$.
The two properties above therefore give $
 \rho S_q(A)\cong L_IQ_I\rho S_q(A)\cong L_IS_q(A)$ in $\KI.$

Since $Q_I$ has a right adjoint, it preserves coproducts.
Note that $(L_I, Q_I)$ is an adjoint pair. It follows from \cite[Lemma~3.5]{Krause05} that $L_I$ preserves compact objects.
By \cite[Theorem~D]{HJ24}, one has
\[
 (\DQ)^c=\Thick_{\DQ}\{S_q(A)\mid q\in\Q\}.
\]
Since $L_I$ is fully faithful and
$(\DQ)^c$ is idempotent complete, its essential image $L_I((\DQ)^c)$ is thick in $\KI$.
 Using the isomorphisms
$L_IS_q(A)\cong\rho S_q(A)$ proved above, we obtain
\[
\begin{aligned}
 L_I((\DQ)^c)
 &=\Thick_{\KI}\{L_IS_q(A)\mid q\in\Q\}\\
 &=\Thick_{\KI}\{\rho S_q(A)\mid q\in\Q\}.
\end{aligned}
\]
Since $L_I$ preserves compact objects, we have
$L_I((\DQ)^c)\subseteq(\KI)^c$.
\end{proof}

\begin{thm}\label{thm:compact-quotient}
Assume Setup~\ref{setup:standing} and let $A$ be left noetherian.
Let $i^*:\KI\to\KAI$ and $L_I:\DQ\to\KI$ be the functors
in the recollement of Theorem~\ref{thm:recollement}, and set
\[
 \mathcal P_{\Q}(A)
 =L_I((\DQ)^c)
 =\Thick_{\KI}\{\rho S_q(A)\mid q\in\Q\}.
\]
Then the functor $i^*$ preserves compact objects, and its restriction
$(\KI)^c\to(\KAI)^c$ has kernel $\mathcal P_{\Q}(A)$.
It induces a triangle equivalence up to direct summands
\[
 \overline{i^*}:
 (\KI)^c/\mathcal P_{\Q}(A)
 \longrightarrow(\KAI)^c,
 \qquad C\longmapsto i^*C.
\]
In particular, every object of $(\KAI)^c$ is a direct summand of $i^*C$
for some $C\in(\KI)^c$, and
$\overline{i^*}$ sends $\rho S_q(M)$ to an
object isomorphic to $\sigma S_q(M)$ for every $q\in\Q$
and every finitely generated left $A$-module $M$.
\end{thm}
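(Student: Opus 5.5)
The plan is to apply Neeman's localization theorem \cite[Theorem~2.1]{Neeman92} to the localizing subcategory $\Im L_I={}^\perp\KAI$ of $\KI$, and then transport the result along the equivalence $\KI/\Im L_I\simeq\KAI$ of Theorem~\ref{thm:recollement}.

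\emph{Setting up Neeman's theorem.} By Theorem~\ref{thm:stalk-generators}, $\KI$ is compactly generated. The subcategory $\Im L_I$ is localizing, since $L_I$ is a left adjoint and hence preserves coproducts, and $\Im L_I={}^\perp\KAI$ is closed under coproducts and triangles. It is generated by the set $\{\rho S_q(A)\}\cong\{L_IS_q(A)\}$ of objects that are compact in $\KI$. To see this, note that the $S_q(A)$ compactly generate $\DQ$ \cite[Theorem~D]{HJ24}, and $L_I$ is an equivalence onto its image; together with Proposition~\ref{prp:rho-L} this gives the generation statement. Neeman's theorem then says two things. First, $(\Im L_I)^c=\Im L_I\cap(\KI)^c$ equals $\Thick\{\rho S_q(A)\}=\mathcal P_{\Q}(A)$. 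Second, the induced functor $(\KI)^c/\mathcal P_{\Q}(A)\to(\KI/\Im L_I)^c$ is fully faithful, and every object of the target is a direct summand of an object in the image.

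\emph{Identifying the functor with $i^*$.} In the recollement, $i^*$ has kernel $\Im L_I$ and induces the equivalence $\KI/\Im L_I\simeq\KAI$, with the Verdier quotient functor corresponding to $i^*$. Compact objects correspond under this equivalence. Hence $i^*$ preserves compacts, which can also be seen directly from \cite[Lemma~3.5]{Krause05} since its right adjoint $i_*$ preserves coproducts (Theorem~\ref{thm:compact-acyclic}). The kernel of $i^*$ restricted to $(\KI)^c$ is $\Im L_I\cap(\KI)^c=\mathcal P_{\Q}(A)$. Transporting Neeman's conclusion gives that $\overline{i^*}$ is fully faithful and that every object of $(\KAI)^c$ is a summand of some $i^*C$ with $C\in(\KI)^c$. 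The final claim, that $\overline{i^*}$ sends $\rho S_q(M)$ to an object isomorphic to $\sigma S_q(M)$, is exactly Proposition~\ref{prp:sigma-preenvelope}.

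\emph{Main obstacle.} The delicate point is the first step: checking the precise identification $(\Im L_I)\cap(\KI)^c=\mathcal P_{\Q}(A)$, i.e.\ that compacts of $\KI$ lying in $\Im L_I$ correspond to compacts of $\DQ$, and verifying the hypotheses of Neeman's theorem. I expect to handle this as follows. Since $Q_I$ preserves coproducts and $L_I$ is fully faithful with $Q_IL_I\cong\Id$, an object $L_IY$ that is compact in $\KI$ yields $\Hom_{\DQ}(Y,\bigoplus Y_i)\cong\Hom_{\KI}(L_IY,\bigoplus L_IY_i)$, where the coproduct is taken in $\KI$ and lies in $\Im L_I$. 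Hence $Y$ is compact in $\DQ$. The reverse inclusion is Proposition~\ref{prp:rho-L}.
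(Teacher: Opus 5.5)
Your proposal is correct and follows essentially the same route as the paper. You apply Neeman's localization theorem to $\Im L_I=\Loc_{\KI}\{L_IS_q(A)\mid q\in\Q\}$ and identify $\Im L_I\cap(\KI)^c$ with $L_I((\DQ)^c)$, using that $L_I$ is fully faithful and preserves coproducts. You then transport the conclusion along $\KI/\Im L_I\simeq\KAI$ and obtain the claim about $\rho S_q(M)$ from Proposition~\ref{prp:sigma-preenvelope}.
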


\begin{proof}
 By Theorem~\ref{thm:recollement},
$\Ker i^*=\Im L_I$, and $i^*$ induces a triangle equivalence
$\KI/\Im L_I\simeq\KAI$.
Since the inclusion $i_*:\KAI\to\KI$ preserves coproducts, it follows from \cite[Lemma 3.5]{Krause05} that $i^*$ preserves compact objects.
We next show that $\Im L_I$ is generated, as a localizing
subcategory of $\KI$, by $\{L_IS_q(A)\mid q\in\Q\}$. Set
\[
 \mathcal T=\Loc_{\KI}\{L_IS_q(A)\mid q\in\Q\}.
\]
Since $L_I$ is fully faithful,
$\Im L_I$ is a triangulated subcategory of $\KI$.
Moreover, $L_I$ preserves coproducts, so $\Im L_I$ is
localizing. Note that $\Im L_I$ contains $L_IS_q(A)$ for each $q\in\Q$.
It follows that
$\mathcal T\subseteq\Im L_I$.
Conversely, consider the full subcategory
\[
 \mathcal U=\{Y\in\DQ\mid L_IY\in\mathcal T\}.
\]
Then $\mathcal U$ is a localizing subcategory of $\DQ$ containing
 $S_q(A)$ for each $q\in\Q$.
Since $\DQ=\Loc\{S_{q}(A)\mid q\in\Q\}$ by
\cite[Theorem~D]{HJ24}, it follows that $\mathcal U=\DQ$, proving
$\Im L_I\subseteq\mathcal T$. Consequently, $
 \Im L_I
 =\Loc_{\KI}\{L_IS_q(A)\mid q\in\Q\}.$

By Proposition~\ref{prp:rho-L}, each $L_IS_q(A)$ is compact
in $\KI$. We now identify the objects of $\Im L_I$ that
are compact in $\KI$. Let $X\in\Im L_I\cap(\KI)^c$.
Then $X\cong L_I(Y)$ for some $Y\in\DQ$. For every set-indexed family
$\{Y_j\}_{j\in J}$ in $\DQ$, there are natural isomorphisms
\[
\begin{aligned}
 \Hom_{\DQ}\Bigl(Y,\bigoplus_{j\in J}Y_j\Bigr)
 &\cong\Hom_{\KI}\Bigl(L_IY,
 L_I\bigl(\bigoplus_{j\in J}Y_j\bigr)\Bigr)\\
 &\cong\Hom_{\KI}\Bigl(X,\bigoplus_{j\in J}L_IY_j\Bigr)\\
 &\cong\bigoplus_{j\in J}\Hom_{\KI}(X,L_IY_j)\\
 &\cong\bigoplus_{j\in J}\Hom_{\DQ}(Y,Y_j).
\end{aligned}
\]
Hence $Y\in(\DQ)^c$, and therefore
$X\in L_I((\DQ)^c)$.
Conversely, if $Y\in(\DQ)^c$, then $L_I(Y)$ is compact in
$\KI$ by Proposition~\ref{prp:rho-L}. Thus we obtain
\[
 \Im L_I\cap(\KI)^c
 =L_I((\DQ)^c)
 =\mathcal P_{\Q}(A).
\]
The $\KI$ is compactly generated by Theorem~\ref{thm:compact-I}, and $\Im L_I$ is the localizing subcategory generated by the compact objects $L_IS_q(A)$.
Now applying the localization theorem of Neeman-
Ravenel-Thomason-Trobaugh-Yao
\cite[Theorem~2.1]{Neeman92}, there is  a fully faithful
triangle functor
\[
 (\KI)^c\big/\bigl(\Im L_I\cap(\KI)^c\bigr)
 \longrightarrow(\KI/\Im L_I)^c
\]
such that every object of $(\KI/\Im L_I)^c$ is a direct summand of
an object in its image. Using the equivalence
$\KI/\Im L_I\simeq\KAI$ induced by $i^*$ and the preceding
identification of $\Im L_I\cap(\KI)^c$, this is the asserted
functor $\overline{i^*}$.
The description of its action on $\rho S_q(M)$ follows from
Proposition~\ref{prp:sigma-preenvelope}.
\end{proof}

Thus $(\KAI)^c$ is triangle equivalent to the idempotent
completion of $(\KI)^c/\mathcal P_{\Q}(A)$.
This description does not require an equivalence between
$(\KI)^c$ and the bounded derived category of noetherian
$\Q$-shaped modules.

\begin{prp}[{\cite[Corollary 6.13]{SV26}}]\label{prp:finite-gldim}
Under Setup~\ref{setup:standing}, if $A$ has finite left global
dimension, then
\[
 \KAI=0\qquad\text{and}\qquad Q_I:\KI\xrightarrow{\sim}\DQ.
\]
This assertion does not require $A$ to be noetherian.
\end{prp}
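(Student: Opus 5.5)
The plan is to show $\EI$ consists only of objects that are zero in $\KI$. Equivalently, every exact objectwise injective $\Q$-shaped module is injective in $\Mod{\Q,A}$. Once $\KAI=0$, the recollement of Theorem~\ref{thm:recollement} gives $\KI\simeq\KI/\KAI\simeq\DQ$ via $Q_I$. More directly, $i^*=i^!=0$ forces the unit $X\to R_IQ_IX$ to be an isomorphism by Proposition~\ref{prp:right-orthogonal}, since $(\KAI)^\perp=\KI$. So $Q_I$ is an equivalence with quasi-inverse $R_I$.

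To see $\EI\subseteq{}_{\Q,A}\Inj$, take $Z\in\EI$ and compare it with the injective model structure $(\Mod{\Q,A},\EE,\EE^{\perp_1})$. Its trivially fibrant objects $\EE\cap\EE^{\perp_1}$ are exactly ${}_{\Q,A}\Inj$. It therefore suffices to prove $Z\in\EE^{\perp_1}$, i.e.\ $\Ext^1_{\Q,A}(E,Z)=0$ for all $E\in\EE$.

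The key input is finite global dimension of $A$, say $n$. First I would show that objects of $\EE$ have finite projective dimension in $\Mod{\Q,A}$ (not just after forgetting to $\Mod{\Q}$). For this I would use that $E^\natural$ has finite projective dimension over $\Q$, and that a resolution by the $F_q(A)$'s combined with $\operatorname{gldim}A\le n$ yields a bound. Concretely, I would invoke \cite[Theorem~4.4]{HJ22}-type descriptions: $\EE$ is the class of objects admitting a finite resolution by objects of the form $F_q(M)$, $M\in\Mod{A}$ arbitrary. Then Lemma~\ref{lem:ext-adjunction} gives
\[
\Ext^m_{\Q,A}(F_q(M),Z)\cong\Ext^m_A(M,Z(q))=0\qquad(m\ge1),
\]
because $Z(q)$ is injective. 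Dimension shifting along the finite resolution of $E$ by such $F_q(M)$'s then gives $\Ext^1_{\Q,A}(E,Z)=0$. This step does not even need gldim finiteness. Instead, finiteness of $\operatorname{gldim}A$ is what guarantees the resolution exists with $F_q(M)$-terms. One checks that $\Ext^{\ge1}$ vanishes against $\I$ for those terms, and the length is finite since $E^\natural$ has finite projective dimension over $\Q$ and $\operatorname{pd}_A$ is bounded by $n$.

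The main obstacle is exactly this description of $\EE$, namely that every exact object admits a finite resolution by sums of $F_q(M)$'s (equivalently ${}^{\perp_1}\I\supseteq\EE$, i.e.\ $\WI=\EE$). If a direct citation fails, I would try showing $\EE\subseteq\WI$ by proving $\Ext^{\ge1}_{\Q,A}(E,Z)=0$ via a Grothendieck spectral sequence or change-of-rings argument. Here $\Ext_{\Q,A}$ is computed by first resolving over $\Q$ and then applying $\Ext_A$ into $Z(q)$. Alternatively, since the statement is cited from \cite[Corollary~6.13]{SV26}, I would fall back on quoting that result for $\KQA=0$. Since $\KAI\subseteq\KQA$, this immediately gives $\KAI=0$, and the equivalence $Q_I$ follows as above. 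None of this uses noetherianity of $A$.
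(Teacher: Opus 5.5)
Your reduction matches the paper's. Since $\EE\cap\EE^{\perp_1}={}_{\Q,A}\Inj$, it suffices to show $\I\subseteq\EE^{\perp_1}$, i.e.\ $\EE\subseteq\WI$, and your passage from $\KAI=0$ to the equivalence $Q_I$ is fine. The gap is that this inclusion is exactly the identity $\EE^{\perp_1}=\I$ of \cite[Theorem~E]{HJ24}, which the paper simply cites, and you do not establish it.

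The description of $\EE$ you want to invoke, that every exact object has a finite resolution by sums of objects $F_q(M)$, cannot be a general fact about $\EE$. By the corrected argument in the next paragraph, it would give $\EE\subseteq\WI$, hence $\KAI=0$, for every $A$. But for chain complexes over $A=\kk[x]/(x^2)$, with $\kk$ a field, the complex $\cdots\xrightarrow{x}A\xrightarrow{x}A\xrightarrow{x}\cdots$ is exact, objectwise injective, and not contractible. So the description can only hold under finite global dimension. There it is a substantive statement (for instance, that objects of $\EE$ have finite projective dimension in $\Mod{\Q,A}$). You explicitly leave it as ``the main obstacle'' and offer only a vague appeal to a spectral sequence, so the core of the proof is missing.

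There are two further problems. First, suppose you had a resolution $0\to C_m\to\cdots\to C_0\to E\to0$ with $\Ext^{\geq1}_{\Q,A}(C_j,\I)=0$. Dimension shifting in the first variable then only yields $\Ext^i_{\Q,A}(E,Z)=0$ for $i>m$, not for $i=1$; compare a module of projective dimension one. You must also shift in the second variable. The sequences $0\to\mathbb K(Z)\to\mathbb F(Z)\to Z\to0$ of Lemma~\ref{lem:canonical-I} have $\mathbb F(Z)$ injective and $\mathbb K(Z)\in\I$, so $\Ext^1_{\Q,A}(E,Z)\cong\Ext^{m+1}_{\Q,A}(E,\mathbb K^m(Z))=0$.

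Second, your fallback rests on a false statement. $\KQA$ need not vanish when $A$ has finite global dimension: for chain complexes over $A=\kk=\Z$, the exact complex $0\to\Z\xrightarrow{2}\Z\to\Z/2\to0$ is not contractible. Only the objectwise injective part $\KAI$ vanishes, and that is precisely what $\EE^{\perp_1}=\I$ provides.
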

\begin{proof}
By \cite[Theorem~E]{HJ24}, finite left global dimension implies
$\EE^{\perp_1}=\I$. Consequently
\[
 \EI=\EE\cap\I=\EE\cap\EE^{\perp_1}={}_{\Q,A}\Inj.
\]
Its stable category is zero. Equivalently, the kernel of $Q_I$ is
zero. The quotient equivalence $\KI/\KAI\simeq\DQ$ in Theorem~\ref{thm:recollement} therefore
makes $Q_I$ a triangle equivalence. This also recovers
\cite[Corollary~6.13]{SV26}.
\end{proof}

\begin{exa}\label{exa:ordinary}
\begin{enumerate}
\item[(1)]  Let $\Q$ be the $\kk$-linear path category of the quiver
\[
 \cdots\longrightarrow q+1\longrightarrow q
 \longrightarrow q-1\longrightarrow\cdots
\]
modulo all paths of length two. Then $\Mod{\Q,A}$ is
the category of chain complexes of left $A$-modules,
and there are natural identifications
\[
 \KI\simeq\mathcal K({}_A\Inj),\quad
 \KAI\simeq\mathcal K^{\mathrm{ac}}({}_A\Inj),
 \quad\text{and}\quad
 \DQ\simeq\mathcal D(A).
\]
Under these identifications, $Q_I$ is the canonical
localization functor. Hence Theorem~\ref{thm:recollement}
gives the recollement
\[
\xymatrix@C=3pc{
 \mathcal K^{\mathrm{ac}}({}_A\Inj)\ar[r]^{i_*}&
 \mathcal K({}_A\Inj)
 \ar@/_1.5pc/[l]_-{i^*}\ar@/^1.5pc/[l]^-{i^!}
 \ar[r]^{Q_I}&
 \mathcal D(A)
 \ar@/_1.5pc/[l]_-{L_I}\ar@/^1.5pc/[l]^-{R_I}}.
\]
Assume now that $A$ is left noetherian.
The preceding recollement recovers
\cite[Corollary~4.3]{Krause05}.
Theorems~\ref{thm:compact-I} and~\ref{thm:compact-acyclic}
show that $\mathcal K({}_A\Inj)$ and
$\mathcal K^{\mathrm{ac}}({}_A\Inj)$ are compactly generated.
By \cite[Proposition~2.3 and Lemma~5.2]{Krause05},
the restriction of $Q_I$ gives a triangle equivalence $
 \mathcal K({}_A\Inj)^c
 \xrightarrow{\ \sim\ }\mathcal D^b({}_A\modu)$
that identifies $L_I(\mathcal D(A)^c)$ with the full
subcategory $\mathcal K^b({}_A\prj)$ of perfect complexes.
Theorem~\ref{thm:compact-quotient} therefore induces a
fully faithful triangle functor
\[
 \mathcal D^b({}_A\modu)/\mathcal K^b({}_A\prj)
 \longrightarrow
 \mathcal K^{\mathrm{ac}}({}_A\Inj)^c,
\]
and every compact object of
$\mathcal K^{\mathrm{ac}}({}_A\Inj)$ is a direct summand
of an object in its essential image.
Thus $\mathcal K^{\mathrm{ac}}({}_A\Inj)^c$ is triangle
equivalent to the idempotent completion of the singularity
category of $A$, recovering \cite[Corollary~5.4]{Krause05}.

\item[(2)] For $m\geq1$, let $\Q_m$ be the $\kk$-linear path category of the quiver

\[
\begin{tikzpicture}[
  every node/.style={inner sep=2.5pt},
  every path/.style={->, bend right=18}
]
  \node (q0)    at (0:2.3)   {$0$};
  \node (qm1)   at (60:2.3)  {$m-1$};
  \node (qm2)   at (120:2.3) {$m-2$};
  \node (qdots) at (180:2.3) {$\vdots$};
  \node (q2)    at (240:2.3) {$2$};
  \node (q1)    at (300:2.3) {$1$};

  \draw (q0)    to (qm1);
  \draw (qm1)   to (qm2);
  \draw (qm2)   to (qdots);
  \draw (qdots) to (q2);
  \draw (q2)    to (q1);
  \draw (q1)    to (q0);
\end{tikzpicture}
\]
modulo all paths of length two. Then $\Mod{\Q_m,A}$ is
the category of $m$-periodic complexes of left $A$-modules.
By \cite[Example 1.5]{HJ242} and Theorem~\ref{thm:recollement}, we get the following recollement
\[
\xymatrix@C=3pc{
 \mathcal K_m^{\mathrm{ac}}({}_A\Inj)\ar[r]^{i_*}&
 \mathcal K_m({}_A\Inj)
 \ar@/_1.5pc/[l]_-{i^*}\ar@/^1.5pc/[l]^-{i^!}
 \ar[r]^{Q_I}&
 \mathcal D_m(A)
 \ar@/_1.5pc/[l]_-{L_I}\ar@/^1.5pc/[l]^-{R_I}},
\]
with $Q_I$ the canonical periodic localization.
When $A$ is left noetherian, we recover Liu's recollement
\cite[Theorem~3.1]{Liu23}.
By Theorems~\ref{thm:compact-I} and~\ref{thm:compact-acyclic},
its middle and left terms are compactly generated, while
the stalk modules $\{S_q(A)\mid q\in\Q\}$ form a set of
compact generators for its right term.

\item[(3)] Let $\Q$ be the $\kk$-linear path category of the quiver
\[
 \cdots\longrightarrow q+1\longrightarrow q
 \longrightarrow q-1\longrightarrow\cdots
\]
modulo all paths of length $N$.
The category $\Mod{\Q,A}$ identifies with the category
of $N$-complexes of left $A$-modules, and the objects of
$\EE$ are precisely the $N$-acyclic complexes.
By \cite[Example 1.5]{HJ242} and Theorem~\ref{thm:recollement}, we get the following recollement
\[
\xymatrix@C=2.8pc{
 \mathcal K_N^{\mathrm{ac}}({}_A\Inj)
 \ar[r]^{i_*}
 &
 \mathcal K_N({}_A\Inj)
 \ar@/_1.5pc/[l]_-{i^*}
 \ar@/^1.5pc/[l]^-{i^!}
 \ar[r]^{Q_I}
 &
 \mathcal D_N(A)
 \ar@/_1.5pc/[l]_-{L_I}
 \ar@/^1.5pc/[l]^-{R_I}.}
\]
Assume that $A$ is left noetherian, and write $S_i(M)$
for the stalk $N$-complex with $M$ in degree $i$.
By Theorem~\ref{thm:B}, the sets
\[
\begin{aligned}
 \mathcal G_I=\{\rho S_i(M)\mid i\in\mathbb Z,\ M\in\mathcal M_A\}\quad \text{and} \quad \mathcal G_{\mathrm{ac}}=\{\sigma S_i(M)\mid i\in\mathbb Z,\ M\in\mathcal M_A\}
\end{aligned}
\]
form sets of compact generators for
$\mathcal K_N({}_A\Inj)$ and
$\mathcal K_N^{\mathrm{ac}}({}_A\Inj)$, respectively.
For $\mathcal K_N({}_A\Inj)$, this recovers the existence
of compact generators established in
\cite[Theorem~3.23]{IKM17}.

To describe the compact objects of
$\mathcal K_N^{\mathrm{ac}}({}_A\Inj)$, recall that
$Q_I$ restricts to a triangle equivalence $
 \mathcal K_N({}_A\Inj)^c
 \xrightarrow{\ \sim\ }\mathcal D_N^b({}_A\modu)$
by \cite[Theorem~3.23]{IKM17}.
Furthermore,  by \cite[Theorem~4.2]{IKM17} and \cite[Proposition~4.15(i)]{IKM17}, we
identify $\mathcal D_N(A)^c$ with
$\mathcal K_N^b({}_A\prj)$.
The isomorphism $Q_IL_I\simeq\Id_{\mathcal D_N(A)}$
therefore shows that the preceding equivalence identifies
$L_I(\mathcal D_N(A)^c)$ with
$\mathcal K_N^b({}_A\prj)$.
Applying Theorem~\ref{thm:compact-quotient}, we obtain
a fully faithful triangle functor
\[
 \mathcal D_N^b({}_A\modu)/\mathcal K_N^b({}_A\prj)
 \longrightarrow
 \mathcal K_N^{\mathrm{ac}}({}_A\Inj)^c.
\]
Every compact object of
$\mathcal K_N^{\mathrm{ac}}({}_A\Inj)$ is a direct
summand of an object in its essential image.
Thus $\mathcal K_N^{\mathrm{ac}}({}_A\Inj)^c$ is
triangle equivalent to the idempotent completion of
the $N$-singularity category
$\mathcal D_N^b({}_A\modu)/\mathcal K_N^b({}_A\prj)$.
\end{enumerate}
\end{exa}

\end{document}